\newcommand{\tomemail}{\href{mailto:tom.bachmann@zoho.com}{tom.bachmann@zoho.com}}
\newtheorem{proposition}{Proposition}
\newtheorem{corollary}[proposition]{Corollary}
\newtheorem{lemma}[proposition]{Lemma}
\newtheorem{theorem}[proposition]{Theorem}
\newtheorem*{conjecture*}{Conjecture}
\newtheorem*{theorem*}{Theorem}
\newtheorem*{corollary*}{Corollary}
\newtheorem*{proposition*}{Proposition}
\newtheorem*{lemma*}{Lemma}
\theoremstyle{definition}
\newtheorem{definition}[proposition]{Definition}
\newtheorem{construction}[proposition]{Construction}
\newtheorem*{definition*}{Definition}
\newtheorem*{construction*}{Construction}
\theoremstyle{remark}
\newtheorem{remark}[proposition]{Remark}
\newtheorem*{remark*}{Remark}
\newtheorem{example}[proposition]{Example}
\newtheorem*{example*}{Example}
\newcommand{\id}{\operatorname{id}}
\newcommand{\Z}{\mathbb{Z}}
\newcommand{\N}{\mathbb{N}}
\newcommand{\Spec}{\mathrm{Spec}}
\let\scr=\mathcal
\let\bb=\mathbb
\newcommand{\Gm}{{\mathbb{G}_m}}
\def\A{\bb A}
\def\P{\bb P}
\newcommand{\1}{\mathbbm{1}}
\newcommand{\veff}{{\text{veff}}}
\newcommand{\SH}{\mathcal{SH}}
\DeclareMathOperator*{\colim}{colim}
\let\lim=\relax
\DeclareMathOperator*{\lim}{lim}
\def\Map{\mathrm{Map}}
\def\CMon{\mathrm{CMon}}
\def\PSh{\mathcal{P}}
\def\Span{\mathrm{Span}}
\def\Cor{\mathrm{Corr}}
\def\Spc{\mathcal{S}\mathrm{pc}{}}
\def\Fin{\cat F\mathrm{in}}
\def\Fun{\mathrm{Fun}}
\def\End{\mathrm{End}}
\newcommand{\wequi}{\simeq}
\def\adj{\leftrightarrows}
\DeclareRobustCommand{\ul}{\underline}
\newcommand{\tr}{\mathrm{tr}}
\newcommand{\Hom}{\operatorname{Hom}}
\newcommand{\iHom}{\ul{\operatorname{Hom}}}
\def\op{\mathrm{op}}
\let\cat=\mathrm
\def\Sm{{\cat{S}\mathrm{m}}}
\def\Nis{\mathrm{Nis}}
\def\Zar{\mathrm{Zar}}
\def\mot{\mathrm{mot}}
\newcommand{\fr}{\mathrm{fr}}
\def\ph{\mathord-}
\numberwithin{proposition}{section}
\def\gc{\mathrm{gp}}
\newcommand{\Sp}{\mathcal{S}\mathrm{p}}
\newcommand{\stab}{\mathrm{st}}
\newcommand{\Shv}{\mathcal{S}\mathrm{hv}}
\newcommand{\efr}{\mathrm{efr}}
\newcommand{\Ab}{\mathrm{Ab}}
\newcommand{\SmAff}{\mathrm{SmAff}}
\newcommand{\future}[1]{\todo[color=blue!40]{#1}}
\newcommand{\tom}[1]{\todo[color=green!40]{#1}}
\newcommand{\elden}[1]{\todo[color=yellow!40]{#1}}
\newcommand{\TODO}[1]{\todo[color=red]{#1}}
\newcommand{\NB}[1]{\todo[color=gray!40]{#1}}
\newcommand{\future}[1]{}
\newcommand{\tom}[1]{}
\newcommand{\elden}[1]{}
\newcommand{\TODO}[1]{}
\newcommand{\NB}[1]{}
\renewcommand{\todo}[1]{}
\title{Notes on motivic infinite loop space theory}
\date{\today}
\author{Tom Bachmann}
\address{Department of Mathematics, Massachusetts Institute of Technology, Cambridge, MA, USA}
\email{\tomemail}
\author{Elden Elmanto}
\address{Department of Mathematics, Harvard University, Cambridge, MA, USA}
\email{\href{mailto:elmanto@math.harvard.edu}{elmanto@math.harvard.edu}}
\begin{document}

\maketitle

\begin{abstract}
In fall of 2019, the Thursday Seminar at Harvard University studied motivic infinite loop space theory.
As part of this, the authors gave a series of talks outlining the main theorems of the theory, together with their proofs, in the case of infinite perfect fields.
In winter of 2021/2, the first author taught a topics course at LMU Munich on strict $\A^1$-invariance of framed presheaves (which is one of the main theorems, but was not covered in detail during the Thursday Seminar).
These are our extended notes on these topics.
\end{abstract}

\tableofcontents

\section{Introduction}
We shall assume knowledge of the basic notions of unstable motivic homotopy theory; see e.g. \cite[\S2.2]{bachmann-norms} for a review and \cite{antieau2017primer} for an introduction.
We shall also use freely the language of $\infty$-categories as set out in \cite{HTT,HA}.

Given a base scheme $S$, we thus have the presentably symmetric monoidal $\infty$-category $\Spc(S)$ of \emph{motivic spaces}, and a functor $\Sm_S \to \Spc(S)$ which preserves finite products (and finite coproducts).
We write $\Spc(S)_* = \Spc(S)_{*/}$ for the presentably symmetric monoidal $\infty$-category of pointed motivic spaces; we use the smash product symmetric monoidal structure.
Let $\P^1_S$ be pointed at $1$; this defines an object of $\Spc(S)_*$.
We write $\Sigma^\infty: \Spc(S)_* \to \SH(S)$ for the universal presentably symmetric monoidal $\infty$-category under $\Spc(S)_*$ in which $\P^1$ becomes $\otimes$-invertible.
Denote by $\SH(S)^\veff \subset \SH(S)$ the closure under colimits of the essential image of the functor $\Sigma^\infty$.

The aim of motivic infinite loop space theory is to describe the category $\SH(S)^\veff$.
It turns out that there is a good answer to this problem if $S=\Spec(k)$, where $k$ is a perfect field.
This uses the notion of \emph{framed transfers}, first discovered by Voevodsky \cite{voevodsky-framed}.
The theory was taken up, and many important results proved, by Garkusha--Panin \cite{garkusha2014framed} and their numerous collaborators; see e.g. \cite{agp,hitr,DruzhininPanin,druzhinin2018framed,gnp}.

Their results were not as complete as one might hope for.
The main reason for this is a deficiency in the interaction between Voevodsky's framed correspondences and products of varieties.
This problem was overcome by Hoyois--Khan--Sosnilo--Yakerson and the second author in \cite{EHKSY1}; their main contribution is the invention of the notion of \emph{tangentially framed correspondences} and an accompanying symmetric monoidal $\infty$-category $\Cor^\fr(k)$.

Using this category, motivic infinite loop space theory can be stated as follows.
\begin{theorem} \label{thm:main}
For a perfect field $k$, there exists a canonical, symmetric monoidal equivalence of $\infty$-categories \[ \Spc^\fr(k)^\gc \wequi \SH(k)^\veff. \]
\end{theorem}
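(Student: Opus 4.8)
The plan is to factor the asserted equivalence through the $\infty$-category $\SH^\fr(k)$ of \emph{framed motivic spectra}, obtained by $\P^1$-stabilizing the $\infty$-category $\Spc^\fr(k)$ of framed motivic spaces --- the $\A^1$-invariant Nisnevich sheaves of spaces on $\Cor^\fr(k)$ --- and to prove separately a \emph{stable comparison} $\SH^\fr(k) \wequi \SH(k)$ and an \emph{unstable recognition principle} $\Spc^\fr(k)^\gc \wequi \SH^\fr(k)^\veff$; composing the two yields the theorem.

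First I would assemble the formal machinery around $\Cor^\fr(k)$. The functor $\Sm_k \to \Cor^\fr(k)$ induces a ``forget framed transfers'' functor $\gamma_* \colon \Spc^\fr(k) \to \Spc(k)$ with a left adjoint $\gamma^*$, and one observes that finite coproducts in $\Cor^\fr(k)$ agree with those of $\Sm_k$ (indeed become biproducts), so that every Nisnevich sheaf on $\Cor^\fr(k)$ is automatically \emph{additive} and the grouplike ones form a semiadditive $\infty$-category. Equipping framed presheaves with Day convolution makes $\gamma^*$ symmetric monoidal, and one obtains a commuting square of symmetric monoidal colimit-preserving functors relating $\Spc(k)_*$, $\Spc^\fr(k)_*$, $\SH(k)$ and $\SH^\fr(k)$, together with the fact that $\gamma_*$ is conservative on grouplike objects. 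This part is purely formal, granting the geometric construction of $\Cor^\fr(k)$ itself.

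Second comes the stable comparison, i.e.\ the \emph{reconstruction theorem}: the functor $\SH^\fr(k) \to \SH(k)$ induced by $\gamma_*$ is a symmetric monoidal equivalence, restricting to $\SH^\fr(k)^\veff \wequi \SH(k)^\veff$. I would prove it in two stages. The $S^1$-stable stage --- that framed $S^1$-spectra compute very effective $S^1$-motivic spectra --- is Garkusha--Panin's theorem; its proof is the geometric heart of the subject, resting on moving lemmas for framed correspondences available over infinite perfect fields, the contractibility of suitable simplicial schemes of correspondences, and the framed analogue of Morel's theorem that $\A^1$-invariant framed Nisnevich sheaves are strictly $\A^1$-invariant. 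The $\Gm$-stable stage then follows from the cancellation theorem for framed correspondences (framed $\Gm$-suspension is fully faithful on the relevant subcategory), so that $\Gm$-stabilizing the $S^1$-level equivalence gives the claim. I expect this step --- the geometric input into Garkusha--Panin and into cancellation --- to be the main obstacle.

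Third, the unstable recognition principle: the framed suspension spectrum functor $\Sigma^\infty_\fr \colon \Spc^\fr(k) \to \SH^\fr(k)$ sends grouplike objects fully faithfully onto $\SH^\fr(k)^\veff$. The key point is that the unit $X \to \Omega^\infty_\fr \Sigma^\infty_\fr X$ is an equivalence for grouplike $X$: since grouplike additive framed presheaves form a semiadditive $\infty$-category, $X$ is canonically an infinite loop object already at the level of framed presheaf spectra, and what must be checked is that Nisnevich sheafification and $\A^1$-localization do not disturb the deloopings --- which follows from the connectivity theorem (these operations preserve connectivity of framed objects) together with strict $\A^1$-invariance of the higher homotopy sheaves. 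Essential surjectivity onto $\SH^\fr(k)^\veff$ is then a Postnikov-tower argument, using that the heart of the effective homotopy $t$-structure consists of strictly $\A^1$-invariant sheaves with framed transfers. Assembling, $\Spc^\fr(k)^\gc \wequi \SH^\fr(k)^\veff \wequi \SH(k)^\veff$; symmetric monoidality is checked by tracking Day convolution through each step (the unit $\Sigma^\infty_\fr$ of the free framed space on $\operatorname{Spec} k$ being sent to $\1 \in \SH(k)$), and canonicity is automatic since each functor used is determined by a universal property.
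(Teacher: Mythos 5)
Your overall decomposition --- formal setup, then the reconstruction theorem $\SH^\fr(k)\wequi\SH(k)$, then an unstable recognition principle identifying $\Spc^\fr(k)^\gc$ with the very effective subcategory --- is the same as the paper's (reconstruction in \S\ref{sec:reconstruction}, cancellation in \S\ref{sec:cancellation}). But there is a genuine gap in your step 3, the unstable recognition principle. You argue that the unit $X\to\Omega^\infty_\fr\Sigma^\infty_\fr X$ is an equivalence for grouplike $X$ because semiadditivity makes $X$ ``canonically an infinite loop object already at the level of framed presheaf spectra,'' and that the remaining point is that sheafification and $\A^1$-localization preserve the deloopings. This only produces $S^1$-deloopings: semiadditivity, group completion, and the strict homotopy invariance theorem together prove $S^1$-cancellation (Proposition \ref{prop:S1-cancellation} in the paper), but they say nothing about whether $X\to\Omega_{\Gm}\Sigma_{\Gm}X$ is an equivalence. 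Since $\SH^\fr(k)$ is the $\P^1$-stabilization and $\P^1\wequi S^1\wedge\Gm$, the unit map $X\to\Omega^\infty_\fr\Sigma^\infty_\fr X$ requires both. The $\Gm$-cancellation is the hard geometric input --- it is proved in the paper by adapting Voevodsky's cancellation argument for motives (the abstract Theorem \ref{thm:abstract-cancellation}, twisted framed correspondences, the explicit maps $g_n^\pm$, etc.) --- and nothing in your ``connectivity plus strict $\A^1$-invariance'' sketch substitutes for it. You do invoke a cancellation theorem in step 2, but there it is used to stabilize an $S^1$-level reconstruction statement, not to prove the unit map unstably; the two uses need to be separated and the $\Gm$-cancellation argument needs to appear explicitly where the unit is analyzed.

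A second, smaller point: your route to reconstruction factors through $S^1$-spectra (framed motives, Garkusha--Panin), whereas the paper proves $\SH^\fr(k)\wequi\SH(k)$ directly at the level of $\P^1$-prespectra, using the cone theorem, Voevodsky's lemma, and a careful bookkeeping of (mixed) prolongable functors and the cyclic-permutation issue. The Garkusha--Panin route is legitimate, but if you take it you must be careful about what the $S^1$-level statement actually says and how $\Gm$-stabilizing it interacts with the very effective subcategory; that step is not merely ``invert $\Gm$ on both sides.'' Finally, essential surjectivity onto $\SH^\fr(k)^\veff$ does not need a Postnikov-tower argument: once full faithfulness holds, the image of $\Sigma^\infty_\fr$ on grouplike objects is closed under colimits and contains the generators $\Sigma^\infty_{\fr,+}X$, which is exactly the definition of the very effective subcategory.
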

Here $\Spc^\fr(k)$ is a category obtained from $\Cor^\fr(k)$ by the usual procedure (consisting of sifted-free cocompletion and motivic localization); it is semiadditive and $\Spc^\fr(k)^\gc$ denotes its subcategory of grouplike objects.

The principal aim of these notes is to explain how to prove this theorem, assuming that $k$ is infinite.
Our secondary aim is to reformulate some of the technical results of \cite{agp,hitr,DruzhininPanin,druzhinin2018framed,gnp} (those that we need in order to prove Theorem \ref{thm:main}) in the language of $\infty$-categories.
As it turns out, this simplifies many of the statements and also many of the proofs.
Given this focus, we do not treat here the construction of the category $\Cor^\fr(k)$ and we refer freely to \cite{EHKSY1} for this and many basic results about framed motivic spaces.

\subsection*{Organization}
The proof of Theorem \ref{thm:main} consists mainly in two steps.
Firstly we show that there is an equivalence $\SH^\fr(k) \wequi \SH(k)$; here $\SH^\fr(k)$ is obtained from $\Spc^\fr(k)$ by inverting the framed motivic space corresponding to $\P^1$.
This is known as the \emph{reconstruction theorem}.
Then we show that the canonical functor $\Spc^\fr(k) \to \SH^\fr(k) \wequi \SH(k)$ is fully faithful.
This is called the \emph{cancellation theorem}.

In \S\ref{sec:reconstruction} we prove the reconstruction theorem modulo a technical result, known as the \emph{cone theorem}.
We then spend all of \S\ref{sec:cone} on proving the cone theorem.
In \S\ref{sec:cancellation} we prove the cancellation theorem, modulo strict $\A^1$-invariance of framed presheaves.
In \S\ref{sec:strict-A1} we prove strict $\A^1$-invariance (of $\A^1$-invariant framed presheaves over infinite perfect fields).

\subsection*{Acknowledgments} We would like to thank the participants of the Thursday seminar who made the experience educational, enjoyable and lively, especially those who gave talks --- Dexter Chua, Jeremy Hahn, Peter Haine, Mike Hopkins, Dylan Wilson, and Lucy Yang. We would additionally like to thank Andrei Druzhinin for useful discussions around the cone theorem and Håkon Kolderup for discussions about the cancellation theorem.

\section{The reconstruction theorem}
\label{sec:reconstruction}
Primary sources: \cite{EHKSY1,garkusha2014framed}.

\subsection{Setup}
Let $S$ be a scheme.
Recall from \cite[\S4]{EHKSY1} that there is a symmetric monoidal, semiadditive $\infty$-category $\Cor^\fr(S)$ and a symmetric monoidal functor $\gamma: \Sm_{S+} \to \Cor^\fr(S)$.\footnote{Recall that for a category with finite coproducts and a final object $*$, $\scr C_+ \subset \scr C_{*/}$ denotes the subcategory on objects of the form $c \coprod *$. We mainly use this in conjunction with the equivalence $\PSh_\Sigma(\scr C_+) \wequi \PSh_\Sigma(\scr C)_*$ \cite[Lemma 2.1]{bachmann-norms}.}
It preserves finite coproducts \cite[Lemma 3.2.6]{EHKSY1} and is essentially surjective (by construction); we refer the reader to \cite[3.2.2]{EHKSY1} for the most important properties.
We denote by $\gamma^*: \PSh_\Sigma(\Sm_{S+}) \to \PSh_\Sigma(\Cor^\fr(S))$ its sifted cocontinuous extension.\footnote{We denote by $\PSh_\Sigma(\scr C) = \Fun^\times(\scr C^\op, \Spc)$ the non-abelian derived category of $\scr C$.}
Write $\Spc(S)_*$ for the localization of $\PSh_\Sigma(\Sm_{S+})$ at the generating motivic equivalences, that is, (generating) Nisnevich equivalences and $\A^1$-homotopy equivalences, and $\Spc^\fr(S)$ for the localization of $\PSh_\Sigma(\Cor^\fr(S))$ at the images of the generating motivic equivalences under $\gamma^*$.
Let $\P^1 \in \Spc(S)_*$ be pointed at $1$.
Recall that for any presentably symmetric monoidal $\infty$-category $\scr C$ and any object $P \in \scr C$ there is a universal presentably symmetric monoidal $\infty$-category under $\scr C$ in which $P$ becomes $\otimes$-invertible \cite[\S2.1]{robalo}; we denote it by $\scr C[P^{-1}]$.

The following is the main result of this section, which we will give a proof of assuming the comparison results explained in~\ref{sec:compare}.
\begin{theorem}[reconstruction] \label{thm:reconstruction}
The induced functor \[ \gamma^*: \Spc(S)_*[(\P^1)^{-1}] \to \Spc^\fr(S)[\gamma^*(\P^1)^{-1}] \] is an equivalence.
\end{theorem}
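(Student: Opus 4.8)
The plan is to exhibit the functor $\gamma^*\colon \Spc(S)_*[(\P^1)^{-1}] \to \Spc^\fr(S)[\gamma^*(\P^1)^{-1}]$ as an instance of a localization, rather than trying to write down an explicit inverse. The key observation is that $\gamma\colon \Sm_{S+} \to \Cor^\fr(S)$ is essentially surjective and preserves finite coproducts, so by the universal property of $\PSh_\Sigma$ the adjunction $\gamma^* \dashv \gamma_*$ has $\gamma^*$ given by left Kan extension; the question of whether (a localization of) $\gamma^*$ is an equivalence is then the question of whether a certain set of morphisms that $\gamma^*$ is forced to invert, together with the motivic equivalences, suffices to reconstruct $\PSh_\Sigma(\Cor^\fr(S))$ from $\PSh_\Sigma(\Sm_{S+})$. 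Concretely, I would first identify, for each pair $X, Y \in \Sm_S$, the ``framed correspondence'' generators of $\Cor^\fr(S)(X,Y)$ and the relations among them; then I would show that after $\P^1$-stabilization and motivic localization these framed correspondences become formal: a framed correspondence from $X$ to $Y$ should induce a map $\Sigma^\infty X_+ \to \Sigma^\infty Y_+$ in $\SH(S)$ via the Pontryagin–Thom / purity construction, and conversely.

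The heart of the argument — and the step I expect to be the main obstacle — is a \emph{recognition} statement: that the composite $\Sm_{S+} \xrightarrow{\gamma} \Cor^\fr(S) \to \Spc^\fr(S) \to \Spc^\fr(S)[\gamma^*(\P^1)^{-1}]$ enjoys the same universal property that characterizes $\Sigma^\infty\colon \Spc(S)_* \to \SH(S)$, namely that it is the initial presentably symmetric monoidal $\infty$-category under $\Spc(S)_*$ in which $\P^1$ is inverted. One direction (the existence of a functor $\Spc(S)_*[(\P^1)^{-1}] \to \Spc^\fr(S)[\gamma^*(\P^1)^{-1}]$) is exactly $\gamma^*$ and is essentially formal from the construction. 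The content is the other direction: one must produce a symmetric monoidal colimit-preserving functor the other way, and the natural strategy is to use the fact that every object of $\Cor^\fr(S)$ is in the image of $\gamma$, so $\Spc^\fr(S)$ is generated under sifted colimits and motivic localization by the $\gamma(X_+)$, together with the claim that framed transfers are \emph{automatically present} in any $\P^1$-stable category — this is where the cone theorem and the ``strict homotopy invariance'' inputs flagged in the excerpt must enter, to control the mapping spaces in $\Spc^\fr(S)$ after $\P^1$-stabilization and show they agree with those in $\SH(S)^\veff$.

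In more detail, I would proceed as follows. First, reduce to checking that $\gamma^*$ is fully faithful and essentially surjective after stabilization; essential surjectivity is immediate from essential surjectivity of $\gamma$ and the fact that both sides are generated under colimits by (stabilizations of) representables. For full faithfulness, it suffices by a standard density argument to compute $\map(\Sigma^{\infty,\fr}_{\P^1}\gamma(X_+), \Sigma^{\infty,\fr}_{\P^1}\gamma(Y_+)[n])$ on the framed side and match it with $\map_{\SH(S)}(\Sigma^\infty X_+, \Sigma^\infty Y_+[n])$. Here one uses that the right-hand side, after Nisnevich sheafification and $\A^1$-localization, is computed by a presheaf with framed transfers, and the key input — the cone theorem, proved in \S\ref{sec:cone} — identifies the relevant framed suspension spectrum with the $\P^1$-loop space of the next one, i.e. gives the analogue of the statement that framed motivic spectra form a stable category with the expected homotopy groups.

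Finally, I would assemble these: granting the cone theorem and strict homotopy invariance, the bigraded homotopy sheaves of $\Sigma^{\infty,\fr}_{\P^1}\gamma(X_+)$ agree with those of $\Sigma^\infty X_+$, so $\gamma^*$ is fully faithful on a set of generators, hence an equivalence. The main obstacle, to reiterate, is not the formal manipulation of universal properties but the geometric input needed to show that framed transfers do not add or kill any morphisms after $\P^1$-inversion — that is, the cone theorem and the accompanying homotopy-invariance statements, which is precisely why the bulk of these notes (\S\ref{sec:cone} and \S\ref{sec:strict-A1}) is devoted to them.
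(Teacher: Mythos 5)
Your high-level reduction --- reduce to full faithfulness of $\gamma^*$, check the unit $\Sigma^\infty_+X \to \gamma_*\gamma^*\Sigma^\infty_+X$ on generators (by Zariski descent one may assume $X$ affine), and bring in the cone theorem and strict homotopy invariance as the geometric inputs --- is the same architecture as the paper's proof. But two genuine gaps appear in the way you describe the content, and they are not cosmetic.

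First, you misattribute the role of the cone theorem: you say it ``identifies the relevant framed suspension spectrum with the $\P^1$-loop space of the next one.'' That identification is actually \emph{Voevodsky's lemma} (the equivalence $h^{\efr,n}(X,\scr O^m)\wequi\Omega_{\P^1}^n L_\Nis\Sigma_T^{n+m}X_+$), which is a separate input that your outline omits entirely. The cone theorem is the statement that $h^{\efr,n}(X/U)\to h^{\efr,n}(X,U)$ --- the map from the sifted-cocontinuous extension applied to the motivic quotient to the geometrically defined relative presheaf --- is a motivic equivalence. Both are needed, as is the motivic equivalence $h^\efr\to h^\fr$ comparing equationally and tangentially framed correspondences, which you also do not name. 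Conflating these three inputs into ``the cone theorem'' hides where the actual work is. Second, your plan to ``match bigraded homotopy sheaves'' has to be run at the level of $\P^1$-prespectra, and there a real compatibility problem arises that your proposal does not address: the lax module structure on $h^{\efr,n}$ gives a mixed prolongation of $\Omega_{\P^1}^n\Sigma_T^n$ (the paper's $\tilde G_n$) which differs from the naive prolongation $G_n$ by a cyclic permutation of the $\P^1$- and $T$-factors. These two prolongable functors are \emph{not} equivalent on the nose, and without knowing that the cyclic permutation on $(\P^1)^{\wedge 3n}$ and $T^{\wedge 3n}$ is $\A^1$-homotopic to the identity, the comparison map on prespectra is simply not constructed. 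This is why the paper develops the machinery of prolongable/mixed-prolongable functors in \S2.3 and passes through $\colim_i G_{3i-1}\wequi\colim_i\tilde G_{3i-1}$ in the final diagram. Your proposal would stall exactly at the point of defining the map you want to show is an equivalence. (Your opening ``recognition'' gambit via the universal property of $\SH(S)$ is, as you yourself note, circular without an inverse functor; the paper simply proves full faithfulness directly via conservativity and cocontinuity of $\gamma_*$, together with a projection-formula argument to remove the $\P^1$-twists from the generators, and that is the cleaner route.)
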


We write $\SH(S) = \Spc(S)_*[(\P^1)^{-1}]$ and $\SH^\fr(S) = \Spc^\fr(S)[\gamma^*(\P^1)^{-1}]$.
We shall prove the result when $S=Spec(k)$ is the spectrum of an infinite field.
The result for general $S$ is reduced to this case in \cite{framed-loc} (using \cite[\S B]{EHKSY1}).

\begin{remark}
While $\SH^\fr(S)$ appears to be a more complicated $\infty$-category than $\SH(S)$ (the $\infty$-category of motivic spectra), the point of motivic infinite loop space theory (and the rest of this note) is to give explicit formulas for mapping spaces in $\SH^\fr(S)$, at least when $S$ is the spectrum of a perfect field.
More precisely: the functor $\Spc(S)_* \rightarrow \SH(S)$ is far from being fully faithful, while the cancellation theorem Theorem~\ref{thm:cancel-main} asserts that the functor $\Spc^\fr(S)_* \rightarrow \SH^\fr(S)$ is fully faithful on grouplike objects. 
\end{remark}

\subsection{Preliminary reductions}
The functor $\gamma^*$ preserves colimits by construction, so has a right adjoint $\gamma_*$.
The stable presentable $\infty$-category $\SH(S)$ is compactly generated by objects of the form $\Sigma^\infty_+ X \wedge (\P^1)^{\wedge n}$, for $X \in \Sm_S$ and $n \in \Z$.
Similarly $\SH^\fr(S)$ is compactly generated by $\gamma^*(\Sigma^\infty_+ X \wedge (\P^1)^{\wedge n})$.
It follows that $\gamma_*: \SH^\fr(S) \to \SH(S)$ is conservative and preserves colimits.

Conservativity of $\gamma_*$ implies that in order to prove that $\gamma^*$ is an equivalence, it suffices to show that it is fully faithful, or equivalently that the unit of adjunction $u: \id \to \gamma_*\gamma^*$ is an equivalence.
Indeed the composite \[ \gamma_* \xrightarrow{u \gamma_*} \gamma_*\gamma^*\gamma_* \xrightarrow{\gamma_* c}\gamma_* \] is the identity ($\gamma^*$ and $\gamma_*$ being adjoints), the first transformation is an equivalence by assumption, hence so is the second one, and finally so is the counit $c$ since $\gamma_*$ is conservative.

Since $\gamma_*$ preserves colimits, the class of objects on which $u$ is an equivalence is closed under colimits.
Hence it suffices to show that $u$ is an equivalence on the generators.

Given any adjunction $F: \scr C \adj \scr D: U$ with $F$ symmetric monoidal, the right adjoint $U$ satisfies a projection formula for strongly dualizable objects: if $P \in \scr C$ is strongly dualizable, then there is an equivalence of functors $\gamma_*(\ph \otimes \gamma^* P) \wequi \gamma_*(\ph) \otimes P$.
Indeed we have a sequence of binatural equivalences \begin{gather*} \Map(\ph, \gamma_*(\ph \otimes \gamma^* P)) \wequi \Map(\gamma^*(\ph), \ph \otimes \gamma^* P) \\ \wequi \Map(\gamma^*(\ph \otimes P^\vee), \ph) \wequi \Map(\ph \otimes P^\vee, \gamma_*(\ph)) \wequi \Map(\ph, \gamma_*(\ph) \otimes P), \end{gather*} and hence the result follows by the Yoneda lemma.

Since $\Sigma^\infty \P^1 \in \SH(S)$ is invertible and hence strongly dualizable, in order to prove Theorem \ref{thm:reconstruction} it is thus enough to show that for every $X \in \Sm_S$, the unit map \[ \Sigma^\infty_+ X \to \gamma_*\gamma^* \Sigma^\infty_+ X \in \SH(S) \] is an equivalence.
Using Zariski descent, we may further assume that $X$ is affine.

\subsection{Recollections on prespectra}
Let $\scr C$ be a presentably symmetric monoidal $\infty$-category\NB{Since tensoring with a fixed object is cocontinuous it has a right adjoint, so the monoidal structure is closed.}, and $P \in \scr C$.
We denote by $\Sp^\N(\scr C, P)$ the $\infty$-category whose objects are sequences $(X_1, X_2, \dots)$ with $X_i \in \scr C$, together with ``bonding maps'' $P \otimes X_i \to X_{i+1}$. The objects are called \emph{prespectra}.
The morphisms are the evident commutative diagrams.
We call $X = (X_n)_n \in \Sp^\N(\scr C, P)$ an \emph{$\Omega$-spectrum} if the adjoints of the bonding maps, $X_i \to \Omega_P X_{i+1}$, are all equivalences.
Here $\Omega_P: \scr C \to \scr C$ denotes the right adjoint of the functor $\Sigma_P := P \otimes (\ph)$.
We denote by $L_\stab \Sp^\N(\scr C, P) \subset \Sp^\N(\scr C, P)$ the subcategory of $\Omega$-spectra.
The inclusion has a left adjoint which we denote by $L_\stab:\Sp^\N(\scr C, P) \to L_\stab \Sp^\N(\scr C, P)$; the maps inverted by $L_\stab$ are called stable equivalences.

\begin{remark} \label{rmk:symmetric-stabilization}
If $P$ is a symmetric object, i.e. for some $n \ge 2$ the cyclic permutation on $P^{\otimes n}$ is homotopic to the identity, then $L_\stab \Sp^\N(\scr C, P) \wequi \scr C[P^{-1}]$.
This is proved in \cite[Corollary 2.22]{robalo}.
\end{remark}

\subsubsection{Spectrification}
There is a natural transformation \[ \Sigma_P \Omega_P \xrightarrow{c} \id \xrightarrow{u} \Omega_P \Sigma_P. \]
Using this we can build a functor $Q_1: \Sp^\N(\scr C, P) \to \Sp^\N(\scr C, P)$ with the property that for $X = (X_n)_n \in \Sp^\N(\scr C, P)$ we have $Q_1(X)_n = \Omega_P X_{n+1}$.
Moreover there is a natural transformation $\id \to Q_1$.
Iterating this construction and taking the colimit we obtain \[ \id \to Q := \colim_n Q_1^{\circ n}. \]

The following is well-known.
\begin{lemma} \label{lemm:spectrification}
Let $X \in \Sp^\N(\scr C, P)$.
\begin{enumerate}
\item The map $X \to QX$ is a stable equivalence.
\item If $\Omega_P$ preserves filtered colimits (i.e. $P \in \scr C$ is compact), then $QX$ is an $\Omega$-spectrum.
\end{enumerate}
\end{lemma}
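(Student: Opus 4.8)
The plan is to analyze the functor $Q_1$ and the transformation $\id \to Q_1$ directly at the level of bonding maps, and then pass to the colimit.

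First I would unwind the construction of $Q_1$. For $X = (X_n)_n$, the object $Q_1(X)$ has $Q_1(X)_n = \Omega_P X_{n+1}$, and its bonding maps $P \otimes \Omega_P X_{n+1} \to \Omega_P X_{n+2}$ are the adjoints of the composites $P \otimes \Omega_P X_{n+1} \xrightarrow{c \otimes \id} X_{n+1} \to \Omega_P \Sigma_P X_{n+1} \xrightarrow{\Omega_P(\text{bonding})} \Omega_P X_{n+2}$; more efficiently, $Q_1(X)$ is the shift $(X_{n+1})_n$ precomposed with $\Omega_P$ levelwise, using the bonding maps of $X$ to supply the new structure maps. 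The natural map $X \to Q_1(X)$ is levelwise $X_n \xrightarrow{u} \Omega_P X_{n+1} = Q_1(X)_n$, i.e. the adjoint of the $n$-th bonding map. Observe that on an $\Omega$-spectrum this map is already a levelwise equivalence, so the colimit $Q X = \colim_n Q_1^{\circ n} X$ satisfies $Q X = QX$ with $(QX)_n = \colim_m \Omega_P^m X_{n+m}$.

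\textbf{Part (1).} I would show the class of maps that $L_\stab$ inverts contains all the maps $X \to Q_1 X$, and is closed under the relevant colimit, so that $X \to QX$ is inverted as well. The key point is that $X \to Q_1 X$ becomes an equivalence after applying $L_\stab$. One clean way: $L_\stab$ inverts a map $f$ iff $f$ induces an equivalence on $\Map(-, Y)$ for every $\Omega$-spectrum $Y$. Given an $\Omega$-spectrum $Y$, I would identify $\Map(Q_1 X, Y)$ with $\Map(X, Y)$ using that the bonding maps of $Y$ are equivalences: explicitly $\Map(Q_1 X, Y) = \lim_n \Map(\Omega_P X_{n+1}, Y_n)$, and using $Y_n \wequi \Omega_P Y_{n+1}$ and adjunction this rewrites as $\lim_n \Map(X_{n+1}, Y_{n+1})$, which via the shift identity is $\Map(X, Y)$, compatibly with the map induced by $X \to Q_1 X$. (Alternatively, one can invoke the two-out-of-three fact that $X \to Q_1 X$ and the canonical shift/suspension comparison differ by a stable equivalence by the very definition of stable equivalences as those inverted in $\scr C[P^{-1}]$, but the $\Map$-into-$\Omega$-spectra argument is more self-contained.) Then $X \to QX$ is a filtered colimit of such maps, hence a stable equivalence since stable equivalences are closed under filtered colimits (being the saturation of a set of maps in a presentable category).

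\textbf{Part (2).} Assume $P$ is compact, so $\Omega_P$ commutes with filtered colimits. I must check the adjoint bonding maps of $QX$ are equivalences, i.e. $(QX)_n \to \Omega_P (QX)_{n+1}$. Compute: $(QX)_n = \colim_m \Omega_P^m X_{n+m}$ and $\Omega_P(QX)_{n+1} = \Omega_P \colim_m \Omega_P^m X_{n+1+m} \wequi \colim_m \Omega_P^{m+1} X_{n+1+m}$, using compactness of $P$ to pull $\Omega_P$ inside the filtered colimit. Reindexing $m \mapsto m+1$ identifies the target with $\colim_{m \ge 1} \Omega_P^m X_{n+m}$, and the bonding map of $QX$ is, up to this identification, the canonical comparison from the colimit over all $m \ge 0$ to the colimit over $m \ge 1$ — a cofinal reindexing, hence an equivalence. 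I would spell out that the map in question is exactly the transition map in the colimit defining $(QX)_n$ and therefore is an equivalence by cofinality of $\{m \ge 1\} \hookrightarrow \{m \ge 0\}$.

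The main obstacle I anticipate is bookkeeping: making the identifications in Part (1) natural in $X$ (so that they are compatible with the map $X \to Q_1 X$ rather than merely abstract equivalences of mapping spaces), and, in both parts, handling the $\infty$-categorical colimits $\colim_m \Omega_P^m X_{n+m}$ with their shift indexing carefully — in particular justifying that $Q = \colim_n Q_1^{\circ n}$ really has the levelwise description $(QX)_n = \colim_m \Omega_P^m X_{n+m}$ as a diagram, not just objectwise. Since the excerpt labels this lemma ``well-known,'' I would keep these verifications brief and lean on the fact that stable equivalences form the saturation of a small set, so closure under filtered colimits is automatic.
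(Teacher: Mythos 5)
The paper does not record a proof of this lemma (it is simply labeled ``well-known''), so there is nothing to compare your argument against; I will assess it on its own. The global strategy is the standard one and Part (2) is correct, but there is a concrete error in Part (1).

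You pass from $\Map(\Omega_P X_{n+1}, Y_n)$ to $\Map(\Omega_P X_{n+1}, \Omega_P Y_{n+1})$ using that $Y$ is an $\Omega$-spectrum, and then claim ``adjunction'' gives $\Map(X_{n+1}, Y_{n+1})$. No such adjunction exists: $\Omega_P$ is a \emph{right} adjoint (to $\Sigma_P$) and in particular not fully faithful, so $\Map(\Omega_P A, \Omega_P B)$ is not $\Map(A,B)$ in general. The correct mechanism is explicit rather than formal: given $f\colon X \to Y$ with $Y$ an $\Omega$-spectrum, one defines $g\colon Q_1 X \to Y$ by $g_n = (u^Y_n)^{-1}\circ \Omega_P(f_{n+1})$, where $u^Y_n\colon Y_n \to \Omega_P Y_{n+1}$ is the (invertible) adjoint bonding map of $Y$. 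That $g$ is a map of prespectra, that its restriction along $X \to Q_1 X$ is $f$, and that every $g\colon Q_1 X \to Y$ arises this way from its restriction, all follow from naturality of the unit $\id \to \Omega_P\Sigma_P$ together with the observation that the adjoint bonding maps of $Q_1 X$ are $\Omega_P$ applied to those of $X$; this is what gives $\Map(Q_1 X, Y)\simeq\Map(X,Y)$. Your parenthetical alternative is likewise not self-contained here, since identifying $L_\stab\Sp^\N(\scr C,P)$ with $\scr C[P^{-1}]$ requires $P$ symmetric (Remark \ref{rmk:symmetric-stabilization}), which the lemma does not assume. A smaller point: $\Map(Q_1 X, Y)$ is not literally $\lim_n\Map(\Omega_P X_{n+1},Y_n)$ --- the bonding compatibilities must appear in the limit diagram --- though you flag such bookkeeping yourself. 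The closure of stable equivalences under filtered colimits, which you invoke to conclude Part (1), does hold since $L_\stab$ is a left adjoint and therefore preserves colimits.
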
\todo{proof?}

\subsubsection{Prolongation}
Let $F: \scr C \to \scr C$ be an endofunctor.
Following Hovey \cite[Lemma 5.2]{hovey2001spectra}, we call $F$ \emph{prolongable} if we are provided with a natural transformation $\tau: \Sigma_P F \to F \Sigma_P$; we denote the data of a prolongable functor as a pair $(F, \tau)$.
Equivalently, we should provide a natural transformation $F \to \Omega_P F \Sigma_P$.
In any case, there is an obvious category of prolongable endofunctors (having objects the pairs $(F, \tau)$ as above).
Any prolongable functor $(F, \tau)$ induces an endofunctor \[ F: Sp^\N(\scr C, P) \to Sp^\N(\scr C, P), (X_n)_n \mapsto (FX_n)_n. \]
The bonding maps of $FX$ are given by \[ \Sigma_P F(X_n) \xrightarrow{\tau_{X_n}} F(\Sigma_P X_n) \xrightarrow{F b_n} F(X_{n+1}), \] where $b_n: \Sigma_P X_n \to X_{n+1}$ is the original bonding map.

\begin{example} \label{ex:OS-prolongable}
The functor $F_n = \Omega_P^n \Sigma_P^n$ is prolongable by $\Omega_P^n u \Sigma_P^n: F_n \to \Omega_P F_n \Sigma_P$, where $u: \id \to \Omega_P \Sigma_P$ is the unit transformation.
One checks easily\todo{ahem...} that \[ F_n \Sigma^\infty X \wequi Q_1^{\circ n} \Sigma^\infty X. \]
The transformation $\Omega_P^nu\Sigma_P^n$ defines a morphism $F_n \to F_{n+1}$ of prolongable functors; let $F_\infty$ be its colimit.
Then one checks that \[ F_\infty \Sigma^\infty X \wequi Q \Sigma^\infty X. \]
\end{example}

\begin{example}
The functor $F = \Sigma_P$ can be prolonged a priori in (at least) two ways, via the canonical isomorphism $\tau_1: \Sigma_P F = \Sigma_P \Sigma_P = F \Sigma_P$ and via the switch map $\tau_2: \Sigma_{P\otimes P} \to \Sigma_{P \otimes P}$.
Then $F_1 \wequi F_2$ as prolongable functors if and only if the switch map on $P \otimes P$ is the identity.
\end{example}

\begin{example} \label{ex:cyclic-prolongation-problem}
Let $F: \scr C \to \scr C$ be a lax $\scr C$-module functor, so that in particular for each $A \in \scr C$ we are given a transformation $\Sigma_A F \to F \Sigma_A$.
Specializing to $A = P$ we obtain a prolongable functor $\tilde F$, natural in the lax $\scr C$-module functor $F$.
The functor $F_n$ (from Example \ref{ex:OS-prolongable}) is a lax $\scr C$-module functor, via \[ A \otimes \iHom(P^{\otimes n}, P^{\otimes n} \otimes X) \to \iHom(P^{\otimes n}, P^{\otimes n} \otimes A \otimes X), \text{``}(a \otimes f) \mapsto c_a \otimes f\text{''}, \] where $c_a$ denotes the ``constant map at $a$''.

Suppose that $P$ is strongly dualizable with dual $P^{\vee}$. Then $F_n$ and $\tilde F_n$ have equivalent underlying functors. However, their prolongation are described in different ways. 
The functor $F_n$ can be written as \[ P^{\vee \otimes n} \otimes P^{\otimes n} \xrightarrow{u} P^{\vee \otimes n} \otimes P^{\otimes n} \otimes P^\vee \otimes P \xrightarrow{\sigma_{324}} P^{\vee \otimes n} \otimes P^\vee \otimes P \otimes P^{\otimes n} \wequi P^{\vee \otimes n+1} \otimes P^{\otimes n+1}. \]
On the other hand the prolongation of $\tilde F_n$ can be written as \[ P^{\vee \otimes n} \otimes P^{\otimes n} \xrightarrow{u} P^{\vee \otimes n} \otimes P^{\otimes n} \otimes P^\vee \otimes P \xrightarrow{\sigma_{123}} P^\vee \otimes P^{\vee \otimes n} \otimes P^{\otimes n} \otimes P \wequi P^{\vee \otimes n+1} \otimes P^{\otimes n+1}. \]
They are isomorphic if and only if the $(n+1)$-fold cyclic permutation acts trivially on $P^{\otimes n}$.\todo{I think.}
\end{example}

\begin{example}
Let $\id \xrightarrow{u} F_1 \xrightarrow{\rho} \id$ be a retraction of prolongable functors.
Since $\rho: F_1 \to \id$ is a morphism of prolongable functors, the following square commutes
\begin{equation*}
\begin{CD}
\Omega_P \Sigma_P = F_1 @>{\Omega_P u \Sigma_P}>> \Omega_P F_1 \Sigma_P = \Omega_P^2 \Sigma_P^2 \\
@V{\rho}VV                             @V{\Omega_P \rho \Sigma_P}VV \\
\id               @>u>>                      F_1 = \Omega_P \Sigma_P.
\end{CD}
\end{equation*}
Hence $u \rho \wequi \Omega_P \rho u \Sigma_P \wequi \id_{\Omega_P \Sigma_P}$ and so $u$ and $\rho$ are inverse equivalences.
\end{example}

\begin{remark} \tom{think this through again}
The prolongations using lax module structures interacts more reasonably with categorical constructions than the one via units of adjunction.
For this reason it is more natural to have a retraction $\id \xrightarrow{u'} \tilde F_1 \xrightarrow{\rho} \id$.
If $P$ is $2$-symmetric (i.e. the switch on $P^{\otimes 2}$ is the identity), then $F_1 \wequi \tilde F_1$ and $u' \wequi u$, under this equivalence.
Hence $u', \rho$ are inverse equivalences.
This holds more generally if $P$ is $n$-symmetric for any $n \ge 2$; this is the content of Voevodsky's cancellation theorem. See Theorem \ref{thm:abstract-cancellation} in \S\ref{sec:cancellation}.
\end{remark}

\subsection{Equationally framed correspondences}
\subsubsection{Framed correspondences}
We have the lax $\Sm_{S+}$-module functor \[ h^\fr: \Sm_{S+} \to \PSh_\Sigma(\Sm_S)_*, X_+ \mapsto \gamma_* \gamma^* X_+. \]
We extend this to a sifted cocontinuous functor \[ h^\fr: \PSh_\Sigma(\Sm_S)_* \wequi \PSh_\Sigma(\Sm_{S+}) \to \PSh_\Sigma(\Sm_S)_*. \]
Of course $\gamma_* \gamma^*$ is already sifted cocontinuous, so $h^\fr \wequi \gamma_* \gamma^*$ and this is just a notational change.

\subsubsection{Equationally framed correspondences}
There are explicitly defined lax $\Sm_{S+}$-module functors \cite[\S2.1]{EHKSY1} \[ h^{\efr,n}: \Sm_{S+} \to \PSh_\Sigma(\Sm_S)_* \] and natural transformations $\sigma: h^{\efr,n} \to h^{\efr,n+1}$.
We denote by \[ h^{\efr,n}: \PSh_\Sigma(\Sm_S)_* \to \PSh_\Sigma(\Sm_S)_* \] the sifted cocontinuous extensions, and by \[ h^\efr: \PSh_\Sigma(\Sm_S)_* \to \PSh_\Sigma(\Sm_S)_* \] the colimit along $\sigma$. We will elaborate on this in \S\ref{sec:quots}.

\subsubsection{Relative equationally framed correspondences} \label{subsec:releq}
Let $U \subset X \in \Sm_S$ be an open immersion.
There are explicitly defined presheaves \[ h^{\efr,n}(X, U) \in \PSh_\Sigma(\Sm_{S+}); \] they depend functorially on the pair $(X,U)$ and are lax modules, in a way which we will not elaborate on.
For us the most important case is where $X = X' \times \A^m$ and $U = X' \times \A^m \setminus X' \times \{0\}$; we put \[ h^{\efr,n}(X', \scr O^n) = h^{\efr,n}(X' \times \A^m, X' \times \A^m \setminus X' \times \{0\}. \]
These assemble into lax $\Sm_{S+}$-module functors $\Sm_{S+} \to \PSh_\Sigma(\Sm_{S+})$. We will elaborate on this in \S\ref{sec:rel}.

\subsection{Comparison results}\label{sec:compare} We now explain the comparison results which go into the proof of the reconstruction theorem. 
\subsubsection{Equationally framed versus tangentially framed}
There is a canonical transformation \[ h^\efr \to h^\fr \in \Fun(\Sm_{S+}, \PSh_\Sigma(\Sm_{S+})) \] which is a motivic equivalence (objectwise) \cite[Corollaries 2.2.20 and 2.3.25]{EHKSY1}.
Since motivic equivalences are stable under (sifted) colimits, the sifted cocontinuous extension of the natural transformation is still a motivic equivalence objectwise.
The transformations are compatible with the lax module structures.

\subsubsection{The cone theorem}
There is a canonical transformation \[ h^{\efr,n}(X/U) \to h^{\efr,n}(X, U); \] here the left hand side is obtained by sifted cocontinuous extension.
This is a motivic equivalence for $X$ affine, provided the base is an infinite field.
This is known as the cone theorem, and will be treated in \S\ref{sec:cone}.

The natural transformation \[ h^{\efr,n}(X \times \A^m/X \times \A^m \setminus X \times 0) \to h^{\efr,n}(X, \scr O^m) \] can be promoted to a lax module transformation.

\subsubsection{Voevodsky's lemma}
We denote by $T \in \PSh_\Sigma(\Sm_{S+})$ the presheaf quotient $\A^1/\Gm$. It comes equipped with a canonical map of presheaves $a:\P^1 \rightarrow L_\Zar T$ by presenting the domain as a (Zariski-local) pushout $\A^1 \cup_{\Gm} \A^1 \simeq \P^1$.

There is a canonical equivalence of lax module functors \[ h^{\efr,n}(X, \scr O^m) \to \Omega_{\P^1}^n L_\Nis \Sigma_T^{n+m}X_+. \]
This is known as Voevodsky's Lemma, see \cite[Appendix A]{EHKSY1} for a proof.
The equivalence is compatible with the natural stabilization maps (increasing $n$) on both sides.

\subsection{Proof of reconstruction}
Write $\Shv_\Nis(S) = L_\Nis \PSh_\Sigma(\Sm_S)$ and similarly $\Shv_\Nis^\fr(S) = L_\Nis \PSh_\Sigma(\Cor^\fr(S))$.

\begin{lemma} \label{lemm:forget-mot-equiv}
The forgetful functor $\Shv_\Nis^\fr(S) \to \Shv_\Nis(S)$ preserves and detects motivic equivalences.
\end{lemma}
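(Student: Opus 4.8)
The plan is to deduce the lemma from two properties of the forgetful functor $U: \Shv_\Nis^\fr(S) \to \Shv_\Nis(S)$ (which sends a framed Nisnevich sheaf to its underlying presheaf on $\Sm_S$): first, that $U$ is conservative, and second, that $U$ commutes with the motivic localization, in the sense that $U \circ L_\mot^\fr \wequi L_\mot \circ U$ naturally, where $L_\mot^\fr: \Shv_\Nis^\fr(S) \to \Spc^\fr(S)$ and $L_\mot: \Shv_\Nis(S) \to \Spc(S)$ are the respective localizations. Granting these, both halves follow formally. If $f$ is a motivic equivalence then $L_\mot^\fr f$ is an equivalence, hence so is $U L_\mot^\fr f \wequi L_\mot U f$, so $Uf$ is a motivic equivalence; conversely, if $Uf$ is a motivic equivalence then $U L_\mot^\fr f \wequi L_\mot U f$ is an equivalence, and conservativity of $U$ then forces $L_\mot^\fr f$ to be an equivalence, i.e. $f$ to be a motivic equivalence.

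Conservativity of $U$ is immediate: $\gamma: \Sm_{S+} \to \Cor^\fr(S)$ is the identity on objects (after adjoining basepoints), so a framed presheaf whose underlying presheaf is contractible is itself contractible, and the same holds after passing to Nisnevich sheaves.

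To prove the second property I would use the standard explicit model of the motivic localization: on $\PSh_\Sigma(\Sm_S)$, and by the same construction on $\PSh_\Sigma(\Cor^\fr(S))$ — the only inputs being the Nisnevich cd-structure and the $\A^1$-homotopies acting through the evident $\Sm_S$-module structure — one has $L_\mot \wequi \colim_\alpha (L_\Nis \circ \mathrm{Sing}^{\A^1})^{\circ \alpha}$, with $\alpha$ running over a suitable ordinal and $\mathrm{Sing}^{\A^1} \mathcal F(X) = |\mathcal F(X \times \Delta^\bullet)|$. It then suffices to check that $U$ commutes with each ingredient of this iteration. It commutes with $\mathrm{Sing}^{\A^1}$ because $U$ is computed pointwise by restriction along $\gamma$, which intertwines the two reindexings by the cosimplicial scheme $\Delta^\bullet$, and because $U$ preserves the sifted colimit computing the realization. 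It commutes with Nisnevich sheafification because, by the construction of the Nisnevich topology on $\Cor^\fr(S)$ in \cite{EHKSY1}, a framed presheaf is a Nisnevich sheaf precisely when its underlying presheaf is, and the corresponding sheafification functors agree under $U$. And it commutes with the transfinite colimit because $U$ preserves filtered colimits. Assembling these identifications (the passage through limit ordinals being covered by the filtered-colimit statement) gives the natural equivalence $U \circ L_\mot^\fr \wequi L_\mot \circ U$.

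The only point here that goes beyond routine bookkeeping is the compatibility of $U$ with Nisnevich sheafification, and this is precisely where one uses that the Nisnevich topology on $\Cor^\fr(S)$ was set up as the pullback along $\gamma$ of the Nisnevich topology on $\Sm_S$; granting that setup from \cite{EHKSY1}, the verification is straightforward. The remaining steps are direct unwindings of the definitions of $U$, of $\mathrm{Sing}^{\A^1}$, and of the motivic localization.
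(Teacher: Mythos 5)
Your overall strategy is sound, and it is essentially the content that the paper outsources to its citation: the paper's proof is just ``Immediate from [EHKSY1, Proposition~3.2.14],'' which packages precisely the statement, while you unwind the argument into (i) conservativity of $\gamma_*$ (immediate from essential surjectivity of $\gamma$) and (ii) commutation of $\gamma_*$ with $L_\mot = \colim (L_\Nis \circ \mathrm{Sing}^{\A^1})^{\circ\bullet}$, the latter reduced to commutation with $\mathrm{Sing}^{\A^1}$, with $L_\Nis$, and with filtered colimits. The formal deduction from (i) and (ii), and the commutation with $\mathrm{Sing}^{\A^1}$ and with filtered colimits, are all correct and routine.

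The one place you underestimate the difficulty is the commutation with Nisnevich sheafification. The fact that a framed presheaf is a Nisnevich sheaf if and only if its restriction along $\gamma$ is (true by construction of the topology on $\Cor^\fr(S)$) does \emph{not} formally imply that $\gamma_* \circ a_\Nis \wequi a_\Nis \circ \gamma_*$: you also need $\gamma_*$ to send the localization map $F \to a_\Nis F$ to a Nisnevich local equivalence, i.e.\ $\gamma_*$ must preserve Nisnevich-local equivalences, which is not automatic since its left adjoint $\gamma^*$ has no reason a priori to preserve sheaves. This is the framed analogue of Voevodsky's theorem that Nisnevich sheafification preserves presheaves with transfers, and it is a genuine theorem in \cite{EHKSY1} (Proposition~3.2.4 in the numbering there), not a ``straightforward verification.'' So your proof is correct in structure, but the step you dismiss as bookkeeping is in fact the load-bearing input, and it is exactly what the paper's citation is delegating.
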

\begin{proof}
Immediate from \cite[Proposition 3.2.14]{EHKSY1}.
\end{proof}

Since $\gamma^*: \Shv_\Nis(S)_* \to \Shv_\Nis^\fr(S)$ is symmetric monoidal, it induces a functor $\gamma^*_\N$ upon passage to prespectra.
We obtain an adjunction \[ \gamma^*_\N: \Sp^\N(\Shv_\Nis(S)_*, \P^1) \adj \Sp^\N(\Shv_\Nis^\fr(S), \gamma^* \P^1): \gamma_*^\N; \] the right adjoint $\gamma_*^\N$ is given by the formula $\gamma_*^\N(X)_n \wequi \gamma_*(X_n)$.
We call a map $X \to Y \in \Sp^\N(\Shv_\Nis(S)_*, \P^1)$ a \emph{level motivic equivalence} if each map $X_n \to Y_n$ is a motivic equivalence, and similarly for framed prespectra.
The saturated class generated by level motivic equivalences and stable equivalences is called stable motivic equivalences.
Local objects for this class of maps are called motivic $\Omega$-spectra; these are the prespectra $X = (X_n)_n$ such that $X$ is an $\Omega$-spectrum and each $X_n$ is motivically local.

\begin{corollary} \label{cor:cancellation1}
The functor $\gamma_*^\N$ preserves and detects stable motivic equivalences.
\end{corollary}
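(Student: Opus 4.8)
The plan is to handle level motivic equivalences and stable equivalences separately, and then to assemble them: the ``preserves'' assertion will follow from cocontinuity of $\gamma_*^\N$, while ``detects'' requires a conservativity argument after passing to the localized categories. Since $\gamma_*^\N(X)_n \simeq \gamma_*(X_n)$ and a map of prespectra is a level motivic equivalence precisely when it is one in each level, Lemma~\ref{lemm:forget-mot-equiv} applied levelwise shows at once that $\gamma_*^\N$ preserves and detects level motivic equivalences.

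Next I would show that $\gamma_*^\N$ commutes with the spectrification functor $Q$ of Lemma~\ref{lemm:spectrification}. The key point is a natural equivalence $\gamma_* \Omega_{\gamma^*\P^1} \simeq \Omega_{\P^1} \gamma_*$. This follows from the adjunction $\gamma^* \dashv \gamma_*$ and the fact that, $\gamma^*$ being symmetric monoidal, $\gamma^*(\ph) \otimes \gamma^*\P^1 \simeq \gamma^*(\ph \otimes \P^1)$: indeed $\Map(A, \gamma_* \iHom(\gamma^*\P^1, Z)) \simeq \Map(\gamma^*(A \otimes \P^1), Z) \simeq \Map(A, \Omega_{\P^1}\gamma_* Z)$, and one invokes the Yoneda lemma. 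Because $\P^1$ is compact in $\Shv_\Nis(S)_*$ and $\gamma_*$ is cocontinuous --- on Nisnevich sheaves it is restriction along the essentially surjective $\gamma$, which commutes with Nisnevich sheafification --- both $\Omega_{\P^1}$ and $\gamma_*$ preserve filtered colimits, so $\gamma_*^\N$ commutes with the functor $Q_1$, and hence with $Q = \colim_n Q_1^{\circ n}$. A map $f$ of prespectra is a stable equivalence if and only if $Qf$ is a level equivalence (by Lemma~\ref{lemm:spectrification} the map $X \to QX$ is a stable equivalence and $QX$ is an $\Omega$-spectrum, and any stable equivalence of $\Omega$-spectra is a level equivalence). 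Combining this with the first paragraph, $\gamma_*^\N$ preserves and detects stable equivalences.

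The stable motivic equivalences form the saturated class generated by the level motivic equivalences together with the stable equivalences. As colimits of prespectra are computed levelwise and $\gamma_*$ is cocontinuous, $\gamma_*^\N$ is cocontinuous; hence the class of maps that $\gamma_*^\N$ carries into the stable motivic equivalences is saturated, and by the above it contains both generating families, so $\gamma_*^\N$ preserves stable motivic equivalences. For detection, note that $\gamma^*_\N$ is also cocontinuous and $\gamma^*$ preserves motivic equivalences (essentially by the definition of $\Spc^\fr(S)$), so the adjunction $\gamma^*_\N \dashv \gamma_*^\N$ descends to an adjunction $\bar\gamma^* : \SH(S) \adj \mathcal D : \bar\gamma_*$, where $\mathcal D$ is the localization of $\Sp^\N(\Shv_\Nis^\fr(S), \gamma^*\P^1)$ at the stable motivic equivalences and $\bar\gamma_*$ is induced by $\gamma_*^\N$. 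Then $\gamma_*^\N f$ is a stable motivic equivalence if and only if $\bar\gamma_*$ sends the image of $f$ in $\mathcal D$ to an equivalence, so it suffices to prove that $\bar\gamma_*$ is conservative. For this I would check that $\mathcal D$ is generated under colimits by the objects $\bar\gamma^*(\Sigma^\infty_+ X \wedge (\P^1)^{\wedge n})$ for $X \in \Sm_S$ and $n \in \Z$ --- this holds because $\gamma$ is essentially surjective, so the framed representables are the $\gamma^*$ of representables, and the localization functors involved preserve colimits --- and then run the standard Yoneda argument transporting mapping spaces across $\bar\gamma^* \dashv \bar\gamma_*$.

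I expect the detection of \emph{stable motivic} equivalences to be the main obstacle: the fact that $\gamma_*^\N$ detects each of the two generating families does not formally give detection of the saturated class they generate, so one really does need to descend to the localized categories and produce a generating set of $\mathcal D$ arising from $\SH(S)$. The commutation of $\gamma_*^\N$ with $Q$ and the cocontinuity of $\gamma_*$ are comparatively routine.
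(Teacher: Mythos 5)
Your argument is correct and ultimately rests on the same core facts the paper uses: that $\gamma_*$ preserves and detects motivic equivalences (Lemma~\ref{lemm:forget-mot-equiv}), preserves filtered colimits, commutes with $\Omega_{\P^1}$, and is conservative. The paper's proof is shorter because it works directly with the composite localization functor: since $\gamma_*^\N$ preserves motivic $\Omega$-spectra, it is enough to check that $\gamma_*^\N$ commutes with the spectrification $Q L_\mot$, whose $n$-th level is the filtered colimit $\colim_i \Omega_{\P^1}^i L_\mot X_{n+i}$; commutation then follows in one line from the three facts above, and preservation/detection both drop out from conservativity of $\gamma_*$. You instead treat level motivic equivalences and stable equivalences separately, assemble ``preserves'' via the strongly saturated class, and then establish ``detects'' by descending to the localized adjunction $\bar\gamma^* \dashv \bar\gamma_*$ and arguing conservativity of $\bar\gamma_*$ from the generators of $\mathcal D \simeq \SH^\fr(S)$. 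This is a valid route (and the final reduction ``$\gamma_*^\N f$ a stable motivic equivalence iff $\bar\gamma_*$ inverts $L_2 f$'' does hold, but only \emph{because} you have already proved preservation --- it is worth making that dependence explicit), essentially re-deriving the conservativity of $\gamma_*: \SH^\fr(S) \to \SH(S)$ that the paper records in the preliminary reductions.

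One small imprecision: in your second paragraph you conclude that $\gamma_*^\N$ detects stable equivalences ``by combining with the first paragraph,'' but the first paragraph only records preservation/detection of level \emph{motivic} equivalences. To go from ``$Q\gamma_*^\N f = \gamma_*^\N Qf$ is a levelwise equivalence'' to ``$Qf$ is a levelwise equivalence'' you need that $\gamma_*$ reflects plain equivalences, i.e.\ that $\gamma_*$ is conservative. This is easy (restriction along the essentially surjective $\gamma$ is conservative), and it is implicit throughout, but it is not literally a consequence of Lemma~\ref{lemm:forget-mot-equiv} as stated, so it deserves a sentence.
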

\begin{proof}
Since $\gamma_*^\N$ preserves motivic $\Omega$-spectra (from its formula above) it is enough to show that it commutes with spectrification.
Let $X = (X_n)_n$ be a prespectrum. By Lemma \ref{lemm:spectrification}(2), its spectrification is given by \[ (QL_\mot X)_n = \colim_i \Omega_{\P^1}^i L_\mot X_{n+i}. \]
Since $\gamma_*: \Shv_\Nis^\fr(S) \to \Shv_\Nis(S)_*$ preserves motivic equivalences, filtered colimits (both by Lemma \ref{lemm:forget-mot-equiv}), and $\P^1$-loops, the result follows.
\end{proof}

We also note the following.
\begin{lemma} \label{lemm:cancellation2}
There are canonical equivalences \[ L_{\stab,\mot} \Sp^\N(\Shv_\Nis(S)_*, \P^1) \wequi \SH(S) \] and \[ L_{\stab,\mot} \Sp^\N(\Shv_\Nis^\fr(S), \gamma^* \P^1) \wequi \SH^\fr(S). \]
\end{lemma}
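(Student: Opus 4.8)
The plan is to deduce both equivalences from Remark~\ref{rmk:symmetric-stabilization}, combined with a compatibility between the $\Sp^\N(\ph)$-construction and motivic Bousfield localization; the two statements have identical proofs, so I describe only the first. Recall that $\P^1$ is a \emph{symmetric} object of $\Spc(S)_*$ in the sense of Remark~\ref{rmk:symmetric-stabilization}: the cyclic permutation on $(\P^1)^{\wedge 3}$ is $\A^1$-homotopic to the identity, since under $(\P^1)^{\wedge 3} \wequi \A^3/(\A^3 \setminus 0)$ it is induced by the corresponding $3\times 3$ permutation matrix, which lies in $SL_3$ and hence is connected to the identity by a chain of $\A^1$-paths. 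Therefore Remark~\ref{rmk:symmetric-stabilization} applies and gives \[ \SH(S) = \Spc(S)_*[(\P^1)^{-1}] \wequi L_\stab \Sp^\N(\Spc(S)_*, \P^1); \] in particular $\SH(S)$ is the localization of $\Sp^\N(\Spc(S)_*, \P^1)$ at the stable equivalences. So it suffices to identify $L_{\stab,\mot}\Sp^\N(\Shv_\Nis(S)_*, \P^1)$ with this localization.

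The key step is to show that $\Sp^\N(\Spc(S)_*, \P^1)$ is the localization of $\Sp^\N(\Shv_\Nis(S)_*, \P^1)$ at the level motivic equivalences. The inclusion $\Spc(S)_* \hookrightarrow \Shv_\Nis(S)_*$ induces a fully faithful functor $\Sp^\N(\Spc(S)_*, \P^1) \hookrightarrow \Sp^\N(\Shv_\Nis(S)_*, \P^1)$ whose essential image consists of the prespectra all of whose terms are motivically local; fully faithfulness and the identification of the image use only that $\Omega_{\P^1}$ preserves motivically local objects and that the smash $\P^1 \wedge X$ formed in $\Spc(S)_*$ is $L_\mot$ of the smash formed in $\Shv_\Nis(S)_*$. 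Since $L_\mot : \Shv_\Nis(S)_* \to \Spc(S)_*$ is a symmetric monoidal localization (motivic equivalences are stable under smashing), it induces a functor $\Sp^\N(\Shv_\Nis(S)_*, \P^1) \to \Sp^\N(\Spc(S)_*, \P^1)$, obtained by applying $L_\mot$ termwise and correcting the bonding maps; this functor is left adjoint to the inclusion, and the unit $(X_n)_n \to (L_\mot X_n)_n$ is a level motivic equivalence. Finally, a prespectrum with motivically local terms is local for level motivic equivalences, since the mapping space out of a prespectrum $(X_n)_n$ into such a target is an iterated limit of mapping spaces of the form $\Map_{\Shv_\Nis(S)_*}(X_n, Z_n)$ and $\Map_{\Shv_\Nis(S)_*}(\P^1 \wedge X_n, Z_{n+1}) \wequi \Map_{\Shv_\Nis(S)_*}(X_n, \Omega_{\P^1} Z_{n+1})$ with $Z_n$, $Z_{n+1}$, and hence $\Omega_{\P^1} Z_{n+1}$ motivically local, each of which is insensitive to motivic equivalences in $X_n$. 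This proves the key step.

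By transitivity of Bousfield localizations, localizing $\Sp^\N(\Shv_\Nis(S)_*, \P^1)$ first at the level motivic equivalences (producing $\Sp^\N(\Spc(S)_*, \P^1)$) and then at the stable equivalences is the same as localizing at the saturated class generated by the two, which is by definition the class of stable motivic equivalences. Together with the first paragraph this yields $L_{\stab,\mot}\Sp^\N(\Shv_\Nis(S)_*, \P^1) \wequi \SH(S)$. For the framed statement one runs the same argument: $\gamma^*\P^1$ is symmetric because $\gamma^*$ is symmetric monoidal, and the motivic localization on the framed side is again symmetric monoidal, since framed motivic equivalences form the saturated class generated by $\gamma^*$ of the motivic equivalences and $\gamma^*\P^1 \wedge \gamma^*(g) \wequi \gamma^*(\P^1 \wedge g)$ is again such a map whenever $g$ is a motivic equivalence (alternatively one invokes Lemma~\ref{lemm:forget-mot-equiv}); then apply Remark~\ref{rmk:symmetric-stabilization} inside $\Spc^\fr(S)$ and repeat.

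The main obstacle is the middle step — the interchange of $\Sp^\N(\ph)$ with the motivic localization. Its nonformal inputs are: (i) that the motivic localization of $\Shv_\Nis(S)_*$ (respectively of $\Shv_\Nis^\fr(S)$) is symmetric monoidal, so that termwise localization of a prespectrum again yields a prespectrum over the localized base; and (ii) that a prespectrum with motivically local terms is already local for level motivic equivalences, which rests on the explicit description of mapping spaces in $\Sp^\N(\ph, \P^1)$ as an iterated limit of mapping spaces in $\Shv_\Nis(S)_*$. Everything else is formal manipulation of reflective localizations and the content of Remark~\ref{rmk:symmetric-stabilization}.
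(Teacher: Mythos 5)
Your proof is correct and follows the same route as the paper: identify $L_\mot \Sp^\N(\Shv_\Nis(S)_*,\P^1) \wequi \Sp^\N(\Spc(S)_*,\P^1)$, observe $\P^1$ is symmetric, and invoke Remark~\ref{rmk:symmetric-stabilization}. The paper simply cites references for the two nonformal inputs (the interchange of $\Sp^\N$ with motivic localization, and the symmetry of $\P^1$), whereas you prove them from scratch; the underlying argument is identical.
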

\begin{proof}
We prove the result for unframed spectra; the other case is similar.
It is easy to see that $L_\mot \Sp^\N(\Shv_\Nis(S)_*, \P^1) \wequi \Sp^\N(\Spc(S)_*, \P^1)$ (see e.g. \cite[Lemma 26]{bachmann-real-etale}).
But $\P^1$ is symmetric in $\Spc(S)_*$ \cite[Lemma 6.3]{hoyois-equivariant} and hence the result follows from Remark \ref{rmk:symmetric-stabilization}.
\end{proof}

Let $G: \Shv_\Nis(S)_* \to \Shv_\Nis(S)_*$ be an endofunctor.
We say that $G$ is \emph{mixed prolongable} if we are given a natural transformation $\Sigma_{\P^1} G \to G \Sigma_T$.
Then $G$ naturally induces a functor \[ G: \Sp^\N(\Shv_\Nis(S)_*, T) \to \Sp^\N(\Shv_\Nis(S)_*, \P^1). \]
Let $G_n = \Omega_{\P^1}^n \Sigma_T^n$.
This is mixed prolongable via \[ \Omega_{\P^1}^n \Sigma_T^n \xrightarrow{\Omega_{\P^1}^n u \Sigma_T^n} \Omega_{\P^1}^n \Omega_T \Sigma_T^{n+1} \xrightarrow{a^*} \Omega_{\P^1}^{n+1} \Sigma_T^{n+1}; \] here $a: \P^1 \to T$ is the canonical map.
For $X \in \Sm_S$, let $\Sigma^\infty_T X$ denote the associated $T$-suspension prespectrum.
Then \[ G_0 \Sigma^\infty_T X = (X, T \wedge X, T^2 \wedge X, \dots) \in \Sp^\N(\Shv_\Nis(S)_*, \P^1) \] is a spectrum motivically equivalent to $\Sigma^\infty_{\P^1} X$.
By Corollary \ref{cor:cancellation1} and Lemma \ref{lemm:cancellation2} it is hence enough to show that \[ G_0 \Sigma^\infty_T X \to \gamma^\N_* \gamma_\N^* G_0 \Sigma^\infty_T X \] is a stable motivic equivalence.
There are canonical maps of mixed prolongable functors $G_0 \to G_1 \to \dots$, and one checks that \[ Q G_0 \Sigma^\infty_T X \wequi \colim_i G_i \Sigma^\infty_T X. \]
In particular the map \[ G_0 \Sigma^\infty_T X \to \colim_i G_{2i+1} \Sigma^\infty_T X \] is a stable equivalence.

The functor $\Omega_{\P^1}^n \Sigma_T^n$ is mixed prolongable in another way, using the lax module structure.
Denote the mixed prolongable functor obtained in this way by $\tilde G_n$.
Arguing as in Example \ref{ex:cyclic-prolongation-problem}, $G_n$ and $\tilde G_n$ differ by cyclic permutations of $\P^1, T$ of order $n+1$.
Note that the functor $\iHom(\ph, \ph)$ preserves $\A^1$-homotopy equivalences in both variables.
Since the cyclic permutation on $(\P^1)^{\wedge 2n+1}$ is $\A^1$-homotopic to the identity\footnote{Observe the equality of permutations $(1,2,3)(3,4,5) \cdots (2n-1,2n,2n+1) = (1,2, \dots, 2n, 2n+1)$.}, and the same holds for $T$\NB{reference}, we deduce that $G_{2i+1} \stackrel{\A^1}{\wequi} \tilde{G}_{2i+1}$ as prolongable functors.\todo{really??}
We learn that the canonical map \[ G_0 \Sigma^\infty_T X \to \colim_i G_{2i+1} \Sigma^\infty_T X \stackrel{\A^1}{\wequi} \colim_i \tilde G_{2i+1} \Sigma^\infty_T X \wequi \colim_i \tilde G_i \Sigma^\infty_T X \] is an $\A^1$-equivalence.

Let $E_i$ denote the sifted cocontinuous approximation\footnote{In the sense that we restrict $G_i$ to $\Sm_{S+}$ and then take the sifted cocontinuous extension.} of $\tilde G_i$, so that there is a map $E_i \to \tilde G_i$ of mixed prolongable functors\todo{details}.
We can view $h^\efr$ (and $h^\fr$) as mixed prolongable functors (note that they preserve Nisnevich equivalences in $\PSh_\Sigma(\Sm_{S+})$ by \cite[Propositions 2.3.7(ii) and 2.1.5(iii)]{EHKSY1} and so descend to Nisnevich sheaves) by using their lax module structures.
By Voevodsky's Lemma and the fact that both functors are sifted cocontinuous extensions from smooth schemes, $E_i \wequi h^{\efr,i}$ as lax modules and hence as mixed prolongable functors.
Thus by the cone theorem (see Theorem \ref{thm:cone} and Remark \ref{rmk:cone-approx} in \S\ref{sec:cone} for more details), the map \[ E_i \Sigma^\infty_T X \to \tilde G_i \Sigma^\infty_T X \] is a level motivic equivalence (here we use that the base is an infinite field).
We obtain the following commutative diagram
\begin{equation*}
\begin{tikzcd}
G_0 \Sigma^\infty_T X \ar[r, "L_{\stab}"] \ar[dr] & G_\infty \Sigma^\infty_T X \ar[r, "L_{\A^1}", leftrightarrow] & \tilde G_\infty \Sigma^\infty_T X \\
         & E_\infty \Sigma^\infty_T \ar[ur, "L_\mot"] \ar[r, "\wequi"] X & h^\efr \Sigma^\infty_T X \ar[r, "L_\mot"] & h^\fr \Sigma^\infty_T X.
\end{tikzcd}
\end{equation*}
All maps are the canonical ones; labels on the arrows denote the type of equivalence.
The composite $G_0 \Sigma^\infty_T X \to h^\fr \Sigma^\infty_T X \wequi \gamma^\N_* \gamma_\N^* G_0 \Sigma^\infty_T X$ is the unit of adjunction.
The diagram proves this unit is a stable motivic equivalence.
This concludes the proof.

\section{The cone theorem}
\label{sec:cone}

Primary sources: \cite{gnp,druzhinin2018framed}.

\subsection{Introduction}

The cone theorem is the determination of the motivic homotopy type of $h^{\efr}(X/U)$, i.e., the ``framed cone'' of an open immersion $U \hookrightarrow X$ where $X$ is smooth. In the proof of the reconstruction theorem, coupled with Voevodsky's lemma (Lemma~\ref{lem:v-lem}), it relates the endofunctor on pointed Nisnevich sheaves given by $\Omega_{\P^1}\Sigma_T$ and the sifted cocontinous extension of a framed model of this functor. 

\newcommand{\qf}{\mathrm{qf}}

\begin{theorem} \label{thm:cone}
Let $k$ be an infinite field, $X$ a smooth affine $k$-scheme, and $U \subset X$ open.
Then there is a canonical motivic equivalence \[ h^{\efr,n}(X/U) \to h^{\efr,n}(X, U). \]
\end{theorem}

For now we work over an arbitrary base scheme $S$. We have already discussed Voevodsky's lemma that describes $h^{\efr,n}(X)$ in terms of maps of pointed sheaves (see \S\ref{subsub:voevodsky-lemma}). In general we can describe the sections of the (pointed) sheaf
\[
L_{\Nis}(X/U),
\]
as follows. Define
\[
Q(X, U)(T) = \{ (Z, \phi) \mid Z \subset T \text{ closed, } \phi: T^h_Z \rightarrow X, \phi^{-1}(X \setminus U) = Z \}
\]
which is pointed at $(\emptyset, \mathrm{can})$. Here $T_Z^h$ denotes the henselization of $T$ in $Z$.\NB{reference?} There is canonical map
\[
Q(X,U) \rightarrow L_{\Nis}(X/U),
\]
which sends a section $(Z, \phi)$ over $T$ to the map
\[
T \simeq L_{\Nis}(T^h_Z \amalg_{T^h_Z \setminus Z} T \setminus Z) \xrightarrow{\phi} X/U.
\]

\begin{lemma}[\cite{EHKSY1}, Proposition A.1.4] \label{lem:v-lem} The map $Q(X, U) \rightarrow L_{\Nis}(X/U)$ is an isomorphism.
\end{lemma}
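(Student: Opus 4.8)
The plan is to identify both sides with the same functor on affine test schemes by a direct unwinding of definitions, then extend to all smooth $T$ by a Nisnevich-local argument. First I would reduce to computing sections over a henselian local scheme $T$ (or more generally an affine $T$): since $L_{\Nis}(X/U)$ is by construction the Nisnevich sheafification of the presheaf quotient $X/U = \mathrm{cofib}(U_+ \to X_+)$ in pointed presheaves, and sheafification is computed by a filtered colimit of Čech nerves, it suffices to check that $Q(X,U)$ is already a Nisnevich sheaf and that the comparison map is an isomorphism after evaluating on henselian local schemes, where sheafification does nothing. That $Q(X,U)$ is a Nisnevich sheaf is a formal check: a closed subset and a map out of the henselization along it both glue uniquely along a Nisnevich cover (using that henselization commutes with the relevant limits), and the compatibility conditions $\phi^{-1}(X\setminus U) = Z$ are local.

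The core of the argument is the computation over a henselian local scheme $T$ with closed point $t$. On the $Q$-side, a section is a pair $(Z,\phi)$ with $Z \subset T$ closed and $\phi\colon T^h_Z \to X$; but $T$ being henselian local, if $t \in Z$ then $T^h_Z = T$, and if $t \notin Z$ then $Z = \emptyset$ and we get the basepoint. So $Q(X,U)(T)$ is the set of maps $\phi\colon T \to X$ with $\phi^{-1}(X \setminus U)$ containing $t$, together with a disjoint basepoint. On the other side, the presheaf cofiber $(X/U)(T)$ is $X(T)/U(T)$ (pointed set quotient), i.e. maps $T \to X$ modulo identifying all those that factor through $U$; and I claim $L_{\Nis}$ applied to this, evaluated at henselian local $T$, gives exactly the same pointed set. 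The point is that a map $T \to X$ lands in $U$ iff $t$ maps into $U$ (since $U$ is open and $T$ is henselian local, hence $t$ is in every nonempty open of $T$ that meets... — more precisely, $\phi^{-1}(U)$ is open and contains $t$ iff it is all of $T$ by locality), so the non-basepoint sections of $X/U$ over $T$ are precisely the maps hitting $X \setminus U$ at $t$, matching $Q$. One must check this survives sheafification, i.e. that the presheaf $T \mapsto [\text{maps }T\to X\text{ hitting }X\setminus U\text{ somewhere}]_+$ is already a sheaf, or at least sheafifies correctly; this is where one invokes that $Q(X,U)$ as defined (with the henselization built in) is the sheafification — the henselization $T^h_Z$ appearing in the definition is exactly the mechanism that makes gluing work, and the displayed map in the excerpt, $T \simeq L_{\Nis}(T^h_Z \coprod_{T^h_Z \setminus Z} (T\setminus Z)) \to X/U$, is the identification one needs: it expresses $T$ as a Nisnevich pushout so that a section $(Z,\phi)$ yields a well-defined element of $L_{\Nis}(X/U)(T)$.

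The main obstacle, and the step requiring genuine care rather than formal nonsense, is verifying that the comparison map is an isomorphism of \emph{sheaves} and not merely of presheaves-on-henselian-locals — equivalently, checking surjectivity and injectivity of $Q(X,U)(T) \to L_{\Nis}(X/U)(T)$ for general affine $T$. For surjectivity: an element of $L_{\Nis}(X/U)(T)$ is represented, Nisnevich-locally on $T$, by a map to $X$; one must assemble the local closed subsets $\phi_i^{-1}(X\setminus U)$ and local maps into a single pair $(Z,\phi)$ with $\phi$ defined on $T^h_Z$, using that away from $Z$ the map is the basepoint, so it only needs to be defined on a henselian neighborhood of $Z$ — this is precisely the content of the Morel–Voevodsky-style gluing for Nisnevich-local cofibers, and I would cite or reprove the relevant excision statement. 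For injectivity: two pairs giving the same element of $L_{\Nis}(X/U)(T)$ must be shown to have the same $Z$ (forced by $\phi^{-1}(X\setminus U) = Z$, as $Z$ can be detected after Nisnevich localization) and the same $\phi$ on $T^h_Z$ (forced since $X$ is a sheaf and the two maps agree Nisnevich-locally near $Z$). Since the statement is quoted verbatim as \cite[Proposition A.1.4]{EHKSY1}, in these notes it would be legitimate to simply cite it; but the honest proof is the above identification of sections, with the henselization in the definition of $Q$ doing exactly the work of making the presheaf into a sheaf.
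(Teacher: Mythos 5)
The paper gives no proof here: the lemma is stated with the citation \cite[Proposition A.1.4]{EHKSY1} and nothing more, as you yourself observe at the end of your sketch. So there is no in-paper argument to compare against; your proposal should be judged as a reconstruction of the cited proof. With that caveat, the overall structure you lay out — show that $Q(X,U)$ is already a Nisnevich sheaf, then compare stalks over henselian local $T$ — is the correct one, and your stalk computation is right: any nonempty closed $Z \subset T$ contains the unique closed point $t$, whence $T^h_Z = T$ since $T$ is henselian; and a morphism $\phi\colon T \to X$ factors through $U$ iff $\phi(t) \in U$, because an open subset of a local scheme containing the closed point is everything. On the other side, $L_\Nis$ is the identity on sections over henselian local $T$, so both sides reduce to (maps $T \to X$ hitting $X\setminus U$ at $t$) plus a basepoint.

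Two things should be tightened. First, dismissing the sheafness of $Q(X,U)$ as ``a formal check'' undersells where the work lives: the henselization built into the definition is exactly what makes descent for the $\phi$-data along a Nisnevich cover go through, and verifying that the $T^h_{Z_i}$ glue correctly to $T^h_Z$ (a statement about commuting the pro-object henselization with the relevant colimits) is the substance of the EHKSY appendix, not a formality. Second, your closing paragraph re-arguing surjectivity and injectivity directly ``for general affine $T$'' is logically redundant once sheafness of $Q(X,U)$ and the stalkwise identification are in hand — two Nisnevich sheaves agreeing on henselian local stalks are isomorphic — and the drift back to a hands-on verification suggests the sheaf-theoretic reduction was not fully trusted. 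Either commit to the stalkwise argument and prove sheafness carefully, or do the direct gluing argument for all $T$; doing a sketch of both blurs which one is carrying the proof.
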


The presheaf of equationally framed correspondences of level $n$ can be phrased in these terms. Let us elaborate on how this is done. Recall that we have $n$ closed immersions $(\P^1)^{n-1} \hookrightarrow (\P^1)^n$ as the components of the ``divisor at $\infty$" (so that $\bigcup (\P^1)^{\times n-1}$ is the divisor $\partial \P$). We then have the fiber sequence (in sets)
\[
h^{\efr,n}(X)(T) \rightarrow Q(\A^n \times X, \A^n_X \setminus 0_X )((\P^1)^{\times n} \times T) \rightarrow \prod_{1 \leq i \leq n}  Q(\A^n \times X, \A^n_X \setminus 0_X )((\P^1)^{\times n-1} \times T).
\]
Via Lemma~\ref{lem:v-lem}, $h^{\efr,n}(X)$ is isomorphic to
\[
\underline{\mathrm{Hom}}_{\PSh_\Sigma(\Sm_{S+})}( (\P^1)^{\wedge n} \wedge(-)_+, L_\Nis (T^{\wedge n} \wedge X_+)). 
\]

\subsection{Relative equationally framed correspondences} \label{sec:rel}
We elaborate on the discussion in~\S\ref{subsec:releq}. Throughout $X$ is a smooth affine $S$-scheme and we have a cospan of $S$-schemes
\[
Y \stackrel{i}{\hookrightarrow} X \stackrel{j}{\hookleftarrow} X \setminus Y (=: U),
\]
where $i$ is a closed immersion and $j$ is its open complement. The presheaf of {\em relative equationally framed correspondences} $h^{\efr,n}(X, U)$ is then defined via a similar formula:
\[
h^{\efr,n}(X, U)(T) \rightarrow Q(\A^n \times X, \A^n_X \setminus (0 \times Y))((\P^1)^{\times n} \times T) \rightarrow \prod_{1 \leq i \leq n}  Q(\A^n \times X, \A^n_X \setminus (0 \times Y))((\P^1)^{\times n-1} \times T).
\]

The next lemma follows from the above discussion.

\begin{lemma} There is a canonical isomorphism of sheaves of sets 
\[
h^{\efr,n}(X, U) \wequi \underline{\mathrm{Hom}}_{\PSh_\Sigma(\Sm_{S+})}( (\P^1)^{\wedge n} \wedge(-)_+, L_\Nis(T^{\wedge n} \wedge (X/U))).
\]
\end{lemma}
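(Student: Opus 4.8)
The plan is to carry the open subscheme $U$ through precisely the argument sketched above for $h^{\efr,n}(X)$, which is the special case $Y = X$, $U = \emptyset$. First recall that, by its defining fiber sequence, $h^{\efr,n}(X,U)$ is the subpresheaf of $W \mapsto Q(\A^n\times X,\ \A^n_X\setminus(0\times Y))((\P^1)^{\times n}\times W)$ consisting of those sections restricting to the basepoint $(\emptyset,\mathrm{can})$ along all $n$ inclusions $(\P^1)^{\times n-1}\times W \hookrightarrow (\P^1)^{\times n}\times W$ cut out by the components of $\partial\P$. Since $X$ is smooth affine, the pair $(\A^n\times X,\ \A^n_X\setminus(0\times Y))$ consists of a smooth $S$-scheme together with an open subscheme, so Lemma~\ref{lem:v-lem} applies and identifies $Q(\A^n\times X,\ \A^n_X\setminus(0\times Y))$ with $L_\Nis((\A^n\times X)/(\A^n_X\setminus(0\times Y)))$, carrying $(\emptyset,\mathrm{can})$ to the canonical basepoint of the quotient.

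The crux is then to identify the pointed presheaf $(\A^n\times X)/(\A^n_X\setminus(0\times Y))$ with $T^{\wedge n}\wedge(X/U)$ \emph{before} Nisnevich sheafification. Iterating the elementary identity $(A/A')\wedge(B/B') = (A\times B)/(A'\times B\cup A\times B')$ for monomorphisms $A'\hookrightarrow A$, $B'\hookrightarrow B$ of presheaves, one gets $T^{\wedge n} = \A^n/(\A^n\setminus 0)$ (a tuple in $\A^n$ being nonzero iff some coordinate is), and then, taking $A = \A^n$, $A' = \A^n\setminus 0$, $B = X$, $B' = U = X\setminus Y$,
\[
T^{\wedge n}\wedge(X/U) = (\A^n\times X)/( (\A^n\setminus 0)\times X \cup \A^n\times U ) = (\A^n\times X)/(\A^n_X\setminus(0\times Y)),
\]
since a point $(v,x)$ of $\A^n\times X$ fails to lie in $0\times Y$ exactly when $v\ne 0$ or $x\in U$. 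Applying $L_\Nis$ and using that the smash product of pointed Nisnevich sheaves is the sheafification of the presheaf-level smash, the right-hand side becomes $T^{\wedge n}\wedge(X/U)$, now formed in pointed Nisnevich sheaves.

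It remains to recognize the resulting fiber as an internal hom. For any pointed Nisnevich sheaf $C$ and any $W\in\Sm_S$, the Yoneda lemma together with the presentation $(\P^1)^{\wedge n}\wedge W_+ = ((\P^1)^{\times n}\times W)/(\partial\P\times W)$ identifies $\iHom((\P^1)^{\wedge n}\wedge(-)_+,\, C)(W)$ naturally with the subset of $C((\P^1)^{\times n}\times W)$ of sections restricting to the basepoint along the $n$ components of $\partial\P\times W$. Taking $C = T^{\wedge n}\wedge(X/U)$ and combining with the two previous steps yields the desired isomorphism of sheaves $h^{\efr,n}(X,U) \iso \iHom((\P^1)^{\wedge n}\wedge(-)_+,\, T^{\wedge n}\wedge(X/U))$. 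It is the canonical map because every ingredient --- the comparison $Q\to L_\Nis$ of Lemma~\ref{lem:v-lem}, the presheaf-level identity of quotients, and the internal-hom adjunction --- is natural in the pair $(X,U)$ (and compatible with the lax $\Sm_{S+}$-module structures).

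I do not expect a genuine obstacle, since the content is exactly that of the discussion preceding the lemma; the only point requiring care is the bookkeeping --- that the quotient identity $(\A^n\times X)/(\A^n_X\setminus(0\times Y)) = T^{\wedge n}\wedge(X/U)$ holds on the nose at the level of pointed presheaves (so that it is preserved by $L_\Nis$), and that the basepoint $(\emptyset,\mathrm{can})$ and the restrictions along $\partial\P$ match on both sides of the fiber sequence.
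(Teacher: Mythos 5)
Your proposal is correct and takes exactly the approach the paper intends: the paper's "proof" of this lemma is literally the sentence "The next lemma follows from the above discussion," and your argument is a careful unwinding of that discussion — applying Lemma~\ref{lem:v-lem} to the pair $(\A^n\times X,\ \A^n_X\setminus(0\times Y))$, recognizing the presheaf-level identity $(\A^n\times X)/(\A^n_X\setminus(0\times Y)) = T^{\wedge n}\wedge(X/U)$, and reading off the fiber over the $\partial\P$-restrictions as the internal hom out of $(\P^1)^{\wedge n}\wedge(-)_+$. The bookkeeping (that the set-theoretic complement computation $v\ne 0$ or $x\in U$ matches, that $L_\Nis$ preserves the smash, and that the basepoint conditions align) is exactly the content the paper leaves implicit.
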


\begin{remark} \label{rmk:cone-approx}
Consider the functor $G: \PSh_\Sigma(\Sm_{S+}) \to \PSh_\Sigma(\Sm_{S+})$ given by \[ G(P) =  \underline{\mathrm{Hom}}_{\PSh_\Sigma(\Sm_{S+})}( (\P^1)^{\wedge n} \wedge(-)_+, L_\Nis(T^{\wedge n} \wedge P)). \]
Write $c: E \to G$ for the sifted-cocontinuous approximation of $G$ (i.e. the left Kan extension of $E|_{\Sm_{S+}}$).
Then by Voevodsky's lemma we have $E \wequi h^{\efr,n}$.
Consequently we obtain a natural map \[ c_{X/U}: h^{\efr,n}(X/U) \wequi E(X/U) \to G(X/U) \wequi h^{\efr,n}(X,U). \]
This is the map of Theorem \ref{thm:cone}.
\end{remark}

Explicitly, elements of $h^{\efr,n}(X, U)(T)$ are described as (equivalence classes of) tuples
\[
(Z, (\phi, g), W),
\]
where
\begin{enumerate}
\item $Z \hookrightarrow \A^n_T$ is a closed subscheme, finite over $T$,
\item $W$ is an \'etale neighborhood of $Z$ in $\A^n_T$,
\item $(\phi, g): W \rightarrow \A^n \times X$ is a morphism such that 
\[
Z = (\phi,g)^{-1}(0 \times Y) = \phi^{-1}(0) \cap g^{-1}(Y).
\]
\end{enumerate}

For example, suppose that $X = \A^1$ and $U = \Gm$. Then $h^{\mathrm{efr}}_n(\A^1, \Gm)$ is isomorphic to
\[
\underline{\mathrm{Hom}}_{\PSh_\Sigma(\Sm_{S+})}( (\P^1)^{\wedge n} \wedge(-)_+, L_\Nis T^{\wedge n+1}). 
\]

\begin{remark}
The subscheme $Z$ in the definition of $Q(X,U)((\P^1)^{\wedge n})$ is not required to be finite.
However, in the definition of $h^{\efr,n}(X, U)$, the $Z$ appearing is a closed subset of both $(\P^1)^{\times n}$ and $\A^n$, so both proper and affine, hence finite.
\end{remark}

We will also need the next presheaf.
\begin{definition} \label{def:qf} Let $h^{\efr,n}_{\qf}(X, U) \subset h^{\efr}(X, U)$ be the subpresheaf consisting of those $(Z, (\phi, g), W)$ where $\phi^{-1}(0) \to T$ is quasi-finite.
\end{definition}
\begin{remark}
Recall that the scheme $W$ in an equationally framed correspondence is well-defined only up to refinement.
If $p: W' \to W$ is such a refinement and $\phi^{-1}(0)$ is quasi-finite, then so is $(\phi \circ p)^{-1}(0)$, $p$ being quasi-finite.\NB{to be pedantic, $p$ is only locally quasi-finite, but can be refined by a quasi-finite map}
The converse need not hold.
\end{remark}

\begin{example} In $h^{\efr,1}(\A^1, \Gm)(k)$, we have the cycle $c = (Z = 0_k, (0, x), \A^1)$, where $0$ indicates the constant function at zero, so we are considering the zero locus of the map
\[
(0, x): \A^1 \rightarrow \A^1 \times \A^1.
\]
In this situation, $0^{-1}(0) = \A^1$ and hence is \emph{not} quasi-finite over the base field, so $c \not\in h^{\efr,1}_{\mathrm{qf}}(\A^1, \Gm)(k)$.
On the other hand $0^{-1}(0) \cap x^{-1}(0) = 0$, which restores the finiteness of $Z$, as needed.
Generically, we should expect quasi-finiteness of $\phi^{-1}(0)$ --- the only function we need to avoid in the above example is literally the constant function at zero.
\end{example}

The relevance of the quasi-finite version is the following.
\begin{construction} \label{eq:construct-can} We have a map
\[
h^{\efr,n}(X) \rightarrow h^{\efr,n}(X, U) \qquad (W, (\phi, g), Z) \mapsto (W, (\phi, g), Z_Y = \phi^{-1}(0) \cap g^{-1}(Y)),
\]
which factors as
\begin{equation} \label{eq:factors}
h^{\efr,n}(X) \rightarrow h^{\efr,n}_{\qf}(X, U) \subset h^{\efr,n}(X, U).
\end{equation}
(Since $\phi^{-1}(0)$ is, in fact, finite.)
Now, consider the diagram
\[
h^{\efr,n}(X \amalg U) \rightrightarrows h^{\efr,n}(X),
\]
\[
(Z, (\phi, g), W) \mapsto ((Z, (\phi, \nabla \circ g), W), (Z_X, (\phi_X, g_X), W_X)),
\]
where $(Z_X, (\phi_X, g_X), W_X)$ is the component of $(Z, (\phi, g), W)$ over $X$, and $\nabla: X \amalg U \to X$ is the fold map. Denote the set-theoretic coequalizer of this diagram (taken sectionwise) by $\tau_{\leq 0}h^{\efr,n}(X/U)$. (This notation is justified in \S\ref{sec:quots}.) The map~\eqref{eq:factors} then further factors as
\[
\begin{tikzcd} 
 & h^{\efr}(X) \ar{dl} \ar{dr} & \\
\tau_{\leq 0}h^{\efr,n}(X/U) \ar[dashed]{rr} & & h^{\efr,n}_\qf(X,U).
\end{tikzcd}
\]
\end{construction}

We can explicitly describe the sections of the presheaf $\tau_{\leq 0}h^{\efr,n}(X/U)$:  if $T \in \Sm_S$, then $\tau_{\leq 0}h^{\efr,n}(X/U) (T)$ is the quotient of $h^{\efr,n}(X)$ modulo the equivalence relation generated by
\[
(W, (\phi, g), Z) \sim (W', (\phi', g'), Z'),
\]
whenever there exists $(W'', (\phi'',g''), Z'')$ such that $g'': W'' \rightarrow U \subset X$, $W = W' \amalg W''$ up to refining the étale neighbourhoods, and $(g,\phi) = (g',\phi') \amalg (g'',\phi'')$.

%
%
\begin{remark} \label{rem:add} We warn the reader that the canonical map $h^{\efr,n}(X \amalg Y) \rightarrow h^{\efr,n}(X) \times h^{\efr}(Y)$ is not an equivalence (unless $X = \emptyset$ or $Y = \emptyset$). It becomes so after applying $L_{\A^1}$ and letting $n \rightarrow \infty$ \cite[Remark 2.19]{EHKSY1}, \cite[Theorem 6.4]{garkusha2014framed}.
\end{remark}

\begin{lemma} \label{lem:l-nis} Let $S$ be any scheme.
The map $\tau_{\leq 0}h^{\efr,n}(X/U)  \rightarrow h^{\efr,n}_\qf(X,U)$ is an $L_{\Nis}$-equivalence.
\end{lemma}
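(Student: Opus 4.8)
The plan is to verify that $p$ is a bijection on sections over every henselian local ring. This will suffice: $L_{\Nis}$-equivalences of presheaves of sets on $\Sm_S$ are detected on the points of the Nisnevich topos, i.e.\ on the henselizations $\mathcal O^h_{Y,y}$ of smooth $S$-schemes, and the three presheaves in sight commute with the cofiltered limits of affine schemes defining these local rings. So fix $T=\mathrm{Spec}(A)$ with $A$ such a henselian local ring, with closed point $s$. By Lemma~\ref{lem:v-lem} and the descriptions in \cite[\S2.1 and Appendix A]{EHKSY1}, a section of $h^{\efr,n}(X)$ over $T$ is a pair $(Z,(\phi,g))$ with $Z\subseteq\A^n_T$ a closed subset and $(\phi,g)\colon(\A^n_T)^h_Z\to\A^n\times X$ a map satisfying $\phi^{-1}(0)=Z$; such $Z$ is automatically finite over $T$, being proper (closed in $(\P^1)^n_T$) and affine over $T$. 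Likewise $h^{\efr,n}_\qf(X,U)(T)$ consists of pairs $(Z_Y,(\phi,g))$ on $(\A^n_T)^h_{Z_Y}$ with $\phi^{-1}(0)\cap g^{-1}(Y)=Z_Y$ and $\phi^{-1}(0)$ quasi-finite over $T$, while (by Construction~\ref{eq:construct-can}) $\tau_{\leq 0}h^{\efr,n}(X/U)(T)$ is $h^{\efr,n}(X)(T)$ modulo adjoining correspondences whose $g$-component lands in $U$, the composite $h^{\efr,n}(X)(T)\to\tau_{\leq 0}h^{\efr,n}(X/U)(T)\xrightarrow{p}h^{\efr,n}_\qf(X,U)(T)$ being $(Z,(\phi,g))\mapsto(Z\cap g^{-1}(Y),(\phi,g))$. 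The ring-theoretic input I will use is that a finite algebra over a henselian local ring is a finite product of henselian local rings, and that a separated quasi-finite scheme over a henselian local ring splits as a finite part plus a part with empty closed fibre.

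For surjectivity I start from $(Z_Y,(\phi,g))\in h^{\efr,n}_\qf(X,U)(T)$ and note that, $Z_Y$ being finite over $A$, one has $(\A^n_T)^h_{Z_Y}=\prod_z\mathcal O^h_{\A^n_T,z}$ over the finitely many closed points $z$ of $Z_Y$ in the fibre over $s$. Then $\phi^{-1}(0)$ is a closed subscheme of this finite product of henselian local schemes, and each of its pieces, being quasi-finite over $T$, decomposes as a finite-over-$T$ part and a part with empty closed fibre; the latter is closed in a local scheme yet misses the closed point (which lies over $s$), hence is empty. Thus $\phi^{-1}(0)$ is finite over $T$ — this is precisely where passing from $h^{\efr,n}(X,U)$ to $h^{\efr,n}_\qf(X,U)$ is used — and its closed points in the fibre over $s$ are exactly those of $Z_Y$, so $(\A^n_T)^h_{\phi^{-1}(0)}=(\A^n_T)^h_{Z_Y}$. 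Hence $(\phi^{-1}(0),(\phi,g))$ is a legitimate section of $h^{\efr,n}(X)$ over $T$, and its class maps to $(Z_Y,(\phi,g))$ under $p$.

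For injectivity let $\alpha_i=(Z_i,(\phi_i,g_i))\in h^{\efr,n}(X)(T)$ for $i=1,2$ with $p([\alpha_1])=p([\alpha_2])$. First normalise: partitioning the closed points of $Z_i=\phi_i^{-1}(0)$ over $s$ according to whether $g_i$ sends them into $Y$ or into $U$ splits $(\A^n_T)^h_{Z_i}=R_i^Y\times R_i^U$ and $Z_i=Z_i^Y\sqcup Z_i^U$ accordingly. Since $R_i^U$ is a product of henselian local schemes whose closed points land in the open set $U$, the restriction of $g_i$ to $R_i^U$ lands in $U$, so $\alpha_i=\alpha_i^Y\sqcup\beta_i$ with $\alpha_i^Y:=(Z_i^Y,(\phi_i,g_i)|_{R_i^Y})\in h^{\efr,n}(X)(T)$ and $\beta_i$ a ``$U$-garbage'' correspondence; hence $[\alpha_i]=[\alpha_i^Y]$ and $p([\alpha_i])=p([\alpha_i^Y])$. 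Now the recorded closed subset of $p([\alpha_i^Y])$ has the same closed points over $s$ as $Z_i^Y$, so $p([\alpha_1^Y])=p([\alpha_2^Y])$ forces $R_1^Y=R_2^Y=:R$; and since $p$ leaves the map $(\phi_i,g_i)\colon R\to\A^n\times X$ unchanged, it also forces $(\phi_1,g_1)=(\phi_2,g_2)$ on $R$, whence $Z_1^Y=Z_2^Y$ and therefore $\alpha_1^Y=\alpha_2^Y$. Thus $[\alpha_1]=[\alpha_1^Y]=[\alpha_2^Y]=[\alpha_2]$.

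The step I expect to require the most care is this passage to $h^{\efr,n}_\qf$: one must be sure that over a henselian local base ``quasi-finite'' genuinely upgrades to ``finite'' for $\phi^{-1}(0)$, which rests on the henselization $(\A^n_T)^h_{Z_Y}$ being a \emph{finite product of henselian local} rings (so that a closed quasi-finite subscheme cannot escape), and on keeping track of the case where $A$ has positive dimension, so that the schemes involved are not merely Artinian. The surrounding identifications of the three presheaves over henselian local rings via Lemma~\ref{lem:v-lem}, and the verification that the ``$U$-garbage'' terms in the normalisation step are bona fide equationally framed correspondences, are routine but need to be spelled out.
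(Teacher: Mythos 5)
Your overall strategy matches the paper's: reduce to sections over a henselian local scheme, use the henselian structure theory to split things into a finite part and a part that escapes, and normalise before proving injectivity. Your reformulation in terms of $(\A^n_T)^h_{Z}$ (a finite product of henselian local schemes) rather than the filtered system of \'etale neighbourhoods is a clean way to organise the argument, and your injectivity step is essentially sound.

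There is, however, a genuine gap in the surjectivity step, and it is exactly the place where the hypothesis ``$S$ regular'' must be used --- your proof never invokes regularity at all, which is a tell-tale sign. You correctly argue that $\phi^{-1}(0) \subset (\A^n_T)^h_{Z_Y}$ is \emph{finite} over $T$ (the henselian decomposition kills the escaping part), and that it has the same closed fibre as $Z_Y$. But this is not enough to conclude that $(\phi^{-1}(0),(\phi,g))$ is a legitimate section of $h^{\efr,n}(X)$: the definition requires the closed subscheme $Z$ in the datum $(Z,(\phi,g),W)$ to be a \emph{closed subscheme of $\A^n_T$}, whereas your $\phi^{-1}(0)$ is a priori only a closed subscheme of the henselization (equivalently, of some \'etale neighbourhood $W$). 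Finiteness over $T$ gives properness of $\phi^{-1}(0) \to \A^n_T$, hence closed image, but it does not give that the map $\phi^{-1}(0) \to \A^n_T$ is a closed immersion; the composite $\mathcal O_{\A^n_T,z_i} \to \mathcal O^h_{\A^n_T,z_i} \twoheadrightarrow \mathcal O^h_{\A^n_T,z_i}/J_i$ need not be surjective when $T$ has positive dimension (the ``finite colength over the residue field'' argument that would make this automatic only applies when $T$ is Artinian, a case you flag but do not resolve). The paper closes exactly this gap by invoking \cite[Lemma 4.2]{gnp} to show that each local piece $C_i$ of $\phi^{-1}(0)$ embeds as a closed subscheme of $\A^n_T$, and it is in that lemma that regularity of $S$ is used. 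You would need to either cite that result or supply an argument for this descent step; as written, the surjectivity part is incomplete.
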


\begin{proof}
Let $T$ be the henselization of a smooth $S$-scheme in a point.
It suffices to show that the map on sections over $T$ is both surjective and injective.

\emph{Surjectivity:} Take $(Z, (\phi, g), W) \in h^{\efr,n}_\qf(X,U)(T)$ and put $V =  \phi^{-1}(0)$, so that $Z = V \cap g^{-1}(Y)$.
We may assume that $W$ is affine (see \cite[Lemma A.1.2(ii)]{EHKSY1}), and hence so is $V$.
Since $V$ is quasi-finite, we may write \[ V = V_1 \amalg \dots \amalg V_{n+1}, \] where $V_i$ is local and finite over $T$ for $i \le n$, and $V_{n+1} \to T$ misses the closed point \cite[Tag 04GJ]{stacks-project}.
Similarly $Z = Z_1 \amalg \dots \amalg Z_d$.
We may assume that $Z_i \subset V_i$ and $d \le n$ (note that $Z_i \to T$ hits the closed point by finiteness and hence $Z_i \not\subset V_{n+1}$).
Removing $V_{d+1} \cup \dots \cup V_{n+1}$ from $W$, we may also assume that $n=d$ and $Z_{n+1} = \emptyset$.
In particular $V$ is finite over $T$.
It remains to prove that $V \to \A^n_T$ is a closed immersion.\NB{Rest of the argument feels unnecessarily convoluted.}
Denote by $\bar V \subset \A^n_T$ the image of $V$, which is a closed subscheme finite over $T$.\NB{$V \to \P^n_T$ is proper, so $\bar V \subset \P^n_T$ is closed, whence $\bar V$ proper over $T$}
We can write $W_{\bar V} = W_1 \amalg W_2$, where $W_1$ is finite over $\bar V$ and $W_2$ misses the closed points.
Then $W_2 \subset W$ is closed and misses all closed points of $V$, so $V \subset W \setminus W_2 =: W'$.
Now $W'_{\bar V} = W_1$ and so $W'_V \to V$ is finite étale; also $W'_Z \to Z$ is an isomorphism, whence so is $W'_V \to V$ \cite[Tag 04GK]{stacks-project}.
It follows that $W'_V \wequi V \to W'$ is a closed immersion and hence $V \to \A^n_T$ is a locally closed immersion (using fpqc descent \cite[Tag 02L6]{stacks-project}).
Since $V$ is finite, this is a closed immersion.


\emph{Injectivity:} Consider two cycles \[ c=(Z, (\phi, g), W), c'=(Z', (\phi', g'), W'), \] with the same image in $h^{\efr,n}_\qf(X,U)$.
Put $Z_1 = Z \cap g^{-1}(Y)$ and $Z_1' = Z' \cap g^{-1}(Y)$.
In other words $Z_1 = Z_1'$ and there exists an étale neighborhood $W''$ refining $W$ and $W'$ such that $(\phi, g)|_{W''} = (\phi', g')|_{W''}$.
We may write $Z = C \amalg D$, where $D \cap Z_1 = \emptyset$ and every component of $C$ meets $Z_1$ (using again \cite[Tag 04GJ]{stacks-project}).
Shrinking $W$ to remove $D$ replaces $c$ by a cycle with the same image in $\tau_{\leq 0}h^{\efr,n}(X/U)$; we may thus assume that $D = \emptyset$.
Now $\sigma: W''_Z \to Z$ is open and its image contains all closed points, so $\sigma$ is surjective.
Since every closed point of $Z$ lifts along $\sigma$ and $\sigma$ is étale, it follows that $\sigma$ admits a section \cite[Tags 04GJ and 04GK]{stacks-project}.
Thus, shrinking $W''$ if necessary, we may assume that it is an étale neighborhood of $Z$.
Arguing the same way for $Z'$ concludes the proof.

\end{proof}

\subsection{Quotients versus homotopy quotients} \label{sec:quots}

The quotient $X/U$ is given by the geometric realization of the following diagram in presheaves (also called a ``bar construction'')
\begin{equation} \label{eq:bar}
\begin{tikzcd}
X_+ 
& (X \amalg U)_+ \arrow[l, shift left]
\arrow[l, shift right]
& (X \amalg U \amalg U)_+ \arrow[l]
\arrow[l, shift left=
2
]
\arrow[l, shift right=
2
]
& \cdots.
\arrow[l, shift left]
\arrow[l, shift right]
\arrow[l, shift left=3]
\arrow[l, shift right=3]
\end{tikzcd}
\end{equation}
By definition (as sifted-colimit preserving extensions) we get that $h^{\efr}(X/U)$ is the colimit of the simplicial diagram
\begin{equation} \label{eq:bar-efr}
\begin{tikzcd}
h^{\efr}(X) 
& h^{\efr}(X\amalg U) \arrow[l, shift left]
\arrow[l, shift right]
& h^{\efr}(X\amalg U\amalg U) \arrow[l]
\arrow[l, shift left=
2
]
\arrow[l, shift right=
2
]
& \cdots.
\arrow[l, shift left]
\arrow[l, shift right]
\arrow[l, shift left=3]
\arrow[l, shift right=3].
\end{tikzcd}
\end{equation}
We remark that the first two maps coincide with those from Construction~\ref{eq:construct-can}. There is thus a canonical map
\[
h^{\efr,n}(X/U) \rightarrow \tau_{\leq 0}h^{\efr,n}(X/U),
\]
which witnesses $0$-truncation of the resulting geometric realization.

%

\begin{construction} \label{eq:construct-can-colim} Composing with the map from Construction~\ref{eq:construct-can}, we get maps
\[
h^{\efr,n}(X/U) \rightarrow \tau_{\leq 0}h^{\efr,n}(X/U) \rightarrow h^{\efr,n}_{\qf}(X,U) \hookrightarrow h^{\efr,n}(X,U),
\]
The composite is the map in question in the cone theorem. 
\end{construction}

We now claim that the first map is an equivalence, i.e., $h^{\efr,n}(X/U)$ is $0$-truncated.

\begin{construction} Let $\mathbf{efr}(X,U)(T)$ denote the following (1-)category (in fact, a poset):
\begin{itemize}
\item the objects are elements of $h^{\efr}(X)(T)$.
\item there is a morphism
\[
(Z, (\phi,g),W) \rightarrow (Z', (\phi',g'),W'),
\]
if and only if there exists a decomposition $Z \amalg Z'' = Z'$, $g'|_{Z''}$ factors through $U \subset X$, and $(\phi',g')|_{W'^h_Z} = (\phi,g)|_{W_Z^h}$.
\end{itemize}
\end{construction}

\begin{lemma} \label{lem:nerve} There is canonical equivalence 
\[
|N_{\bullet}\mathbf{efr}(X,U)(T)| \simeq h^{\efr}(X/U)
\]
\end{lemma}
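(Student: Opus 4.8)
The statement to prove is Lemma~\ref{lem:nerve}: that $|N_\bullet \mathbf{efr}(X,U)(T)| \simeq h^\efr(X/U)$.

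\medskip

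The plan is to identify the nerve of $\mathbf{efr}(X,U)(T)$ with a bar-type resolution and compare it to the diagram~\eqref{eq:bar-efr} whose colimit computes $h^\efr(X/U)$. First I would observe that an object of $\mathbf{efr}(X,U)(T)$ is just a point of $h^\efr(X)(T)$, while a morphism records a way of exhibiting the source as obtained from the target by deleting a sub-cycle whose $g$-component lands in $U$. The key combinatorial point is that a chain of $k$ composable morphisms in $\mathbf{efr}(X,U)(T)$ is the same data as a point of $h^\efr(X \coprod U^{\coprod k})(T)$: the "base" object together with the nested sequence of $U$-valued pieces being successively removed corresponds exactly to a framed correspondence whose target is the $k$-fold coproduct decomposition $X \coprod U \coprod \cdots \coprod U$. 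Concretely, one unwinds the description of $h^\efr(X \coprod U^{\coprod k})(T)$ (a cycle with target-components distributed among the $k+1$ pieces) and checks that the face and degeneracy maps of the nerve match the simplicial structure maps of~\eqref{eq:bar-efr} (merging adjacent $U$-summands, forgetting outer summands, inserting empty summands). This gives a levelwise identification $N_k \mathbf{efr}(X,U)(T) \cong h^\efr(X \coprod U^{\coprod k})(T)$ compatible with all simplicial operators, hence an equivalence of simplicial objects, and therefore an equivalence on geometric realizations: $|N_\bullet \mathbf{efr}(X,U)(T)| \simeq |h^\efr(X \coprod U^{\coprod \bullet})(T)|$.

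\medskip

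Next I would match this with $h^\efr(X/U)$. By the discussion preceding Construction~\ref{eq:construct-can-colim}, $h^\efr(X/U)$ is the colimit of the simplicial diagram~\eqref{eq:bar-efr}, which is exactly $|h^\efr(X\coprod U^{\coprod\bullet})(T)|$ evaluated sectionwise on $T$ (sifted-colimit-preserving extension commutes with this geometric realization since realizations are sifted colimits). Combining the two identifications yields the claimed equivalence. The only subtlety is keeping careful track of whether the nerve uses henselizations $W_Z^h$ in the definition of morphisms while~\eqref{eq:bar-efr} uses honest étale neighborhoods; but this is precisely the same kind of refinement relation already handled in the proof of Lemma~\ref{lem:l-nis} (cycles are equivalence classes under refining étale neighbourhoods, and henselization is the filtered limit of these), so passing between the two descriptions does not change the underlying set of sections.

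\medskip

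The main obstacle I anticipate is the bookkeeping in the levelwise identification $N_k\mathbf{efr}(X,U)(T) \cong h^\efr(X\coprod U^{\coprod k})(T)$: one must check not just a bijection of sets at each level but full compatibility with all $k+2$ face maps and $k+1$ degeneracy maps, and in particular that the two "outer" face maps of the nerve (which compose away the first, resp. last, morphism in a chain) correspond to the two maps $h^\efr(X\coprod U)\rightrightarrows h^\efr(X)$ of Construction~\ref{eq:construct-can}, one being "forget the $U$-summand" and the other being "absorb the $U$-summand into $X$". Once this dictionary is set up correctly the rest is formal. A cleaner alternative, which I would pursue if the direct matching becomes unwieldy, is to exhibit a natural transformation of simplicial objects in one direction and check it is a levelwise isomorphism using the explicit sectionwise description of $\tau_{\le 0}h^{\efr,n}(X/U)$ together with the poset structure of $\mathbf{efr}(X,U)(T)$.
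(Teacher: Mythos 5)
Your proposal is correct and takes essentially the same approach as the paper: identify $N_k\mathbf{efr}(X,U)(T)$ levelwise with $h^{\efr}(X\coprod U^{\coprod k})(T)$ by matching a chain of $k$ composable morphisms with the corresponding nested decomposition of the cycle, check compatibility with the simplicial operators, and then observe that the realization of the resulting simplicial object is $h^{\efr}(X/U)$ by construction of the sifted-colimit extension. The extra care you take over henselizations versus \'etale neighbourhoods is a genuine subtlety that the paper's own proof waves through with ``what happens on the data of the \'etale neighborhood \dots\ will be clear.''
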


\begin{proof} For this proof we will abbreviate $(W, (\phi,g), Z)$ as $(Z, \Phi)$; as we manipulate these cycles what happens on the data of the \'etale neighborhood and defining functions will be clear. For each $n$, we have a map
\[
N_n\mathbf{efr}(X,U)(T) \rightarrow h^{\efr}(X \amalg U^{\amalg n})(T),
\]
given by
\[
(Z_0, \Phi_0) \rightarrow \cdots \rightarrow (Z_n, \Phi_n) \mapsto (Z_0 \amalg (Z_1 \setminus Z_0) \amalg (Z_2 \setminus Z_1) \amalg (Z_n \setminus Z_{n-1}), \Phi_n).
\]

%

On the other hand, if $(Z, \Phi) \in h^{\efr}(X \amalg U^{\amalg n})(T)$ we get cycles $\{ Z'_i, \Phi'_i \}_{i \geq 1}$ by pulling back along the various inclusions $\{ \iota_i: U \hookrightarrow X \amalg U^{\amalg n} \}$ and also a cycle $(Z_0, \Phi_0)$ by pulling back along $X \hookrightarrow X \amalg U^{\amalg n}$. This defines an element $N_n\mathbf{efr}(X,U)(T)$ by setting $(Z_i, \Phi)= (Z_0 \amalg Z'_1 \amalg \cdots Z'_i, \Phi_i)$, with the maps determined. These maps induce mutual inverses of simplicial sets.
\end{proof}

\begin{lemma} \label{lem:discrete} The space $|N_{\bullet}\mathbf{efr}(X,U)(T)|$ is $0$-truncated.
\end{lemma}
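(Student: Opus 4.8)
The plan is to recognize $|N_\bullet\mathbf{efr}(X,U)(T)|$ as the classifying space of the poset $\mathbf{efr}(X,U)(T)$ and to prove that each of its connected components is weakly contractible, by exhibiting in each one an initial object: the ``minimal'' equationally framed cycle obtained by discarding the part of $Z$ lying over $U$. First I would record that $\mathbf{efr}(X,U)(T)$ is a poset --- a morphism $(Z,\Phi)\to(Z',\Phi')$ forces the decomposition $Z\amalg Z''=Z'$ to be the splitting of $Z'$ into $Z$ and its clopen complement, and the remaining clause is a condition rather than a choice, so there is at most one morphism between any two objects. Hence $\mathbf{efr}(X,U)(T)=\coprod_\alpha\mathcal C_\alpha$ decomposes into connected components, $B\mathbf{efr}(X,U)(T)=\coprod_\alpha B\mathcal C_\alpha$, and it suffices to show each $B\mathcal C_\alpha\simeq *$; then $|N_\bullet\mathbf{efr}(X,U)(T)|$ is the discrete set of connected components, in particular $0$-truncated.

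Next I would construct, for each object $c=(Z,(\phi,g),W)$ (with $Z=\phi^{-1}(0)$), a minimal object $c^\flat$. Since $Z$ is finite over $T$ it has finitely many connected components (this is where one uses that $T$ is Noetherian), and each component either meets $g^{-1}(X\setminus U)$ or is contained in $g^{-1}(U)$. Let $Z^\flat\subseteq Z$ be the union of the components of the first kind --- equivalently, the smallest clopen subscheme of $Z$ containing $Z\cap g^{-1}(X\setminus U)$. Then $Z\setminus Z^\flat$ is clopen in $Z$, hence closed in $W$, so $W^\flat:=W\setminus(Z\setminus Z^\flat)$ is again an \'etale neighbourhood of $Z^\flat$ in $\A^n_T$ with $\phi^{-1}(0)\cap W^\flat=Z^\flat$; and $Z^\flat$, being clopen in $Z$, is still finite over $T$ and a closed subscheme of $\A^n_T$. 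Thus $c^\flat:=(Z^\flat,(\phi,g)|_{W^\flat},W^\flat)\in h^\efr(X)(T)$, and since $Z\setminus Z^\flat$ consists of components of $Z$ mapping into $U$, the evident decomposition exhibits a canonical morphism $c^\flat\to c$.

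Finally I would prove the key point: if $c\to c'$ is a morphism, then $c^\flat\cong(c')^\flat$ (by the unique isomorphism available in a poset). Writing $Z'=Z\amalg Z''$ with $g'(Z'')\subseteq U$ and $(\phi',g')|_Z=(\phi,g)|_Z$, the closed set $Z'\cap g'^{-1}(X\setminus U)$ is contained in $Z$ and equals $Z\cap g^{-1}(X\setminus U)$; and because $Z$ is clopen in $Z'$, the clopen hull of this set inside $Z'$ agrees with its clopen hull inside $Z$, while the defining maps match by restriction. Therefore $c\mapsto c^\flat$ is constant (up to canonical isomorphism) along every zigzag, hence on each connected component. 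Fixing $\mathcal C_\alpha$ and choosing any $c_0\in\mathcal C_\alpha$, one gets for every $c\in\mathcal C_\alpha$ an isomorphism $c^\flat\cong c_0^\flat=:m_\alpha$ and hence a morphism $m_\alpha\cong c^\flat\to c$, which is unique since we are in a poset; so $m_\alpha$ is an initial object of $\mathcal C_\alpha$. A category with an initial object has contractible classifying space, so $B\mathcal C_\alpha\simeq *$, which concludes the proof.

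I expect the only genuine work to be in the middle step: checking that replacing $Z$ by the clopen piece $Z^\flat$ and shrinking $W$ to $W^\flat$ really produces an equationally framed correspondence (finiteness over $T$, the closed-immersion condition into $\A^n_T$, and $\phi^{-1}(0)\cap W^\flat=Z^\flat$), and that this construction is natural enough to yield the comparison $c^\flat\cong(c')^\flat$ along a morphism. These are elementary points, but they are where the geometry --- as opposed to the formal category theory of Steps 1 and 3 --- actually enters.
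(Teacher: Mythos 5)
Your proof is correct and takes essentially the same route as the paper. The paper packages the key geometric operation --- discarding the components of $Z$ that factor through $U$ --- as a right adjoint to the inclusion of the discrete subcategory $\mathbf{efr}(X,U)(T)^0$ of cycles with no component landing in $U$, and invokes the fact that adjunctions induce equivalences on classifying spaces; you reach the same conclusion by observing that this operation $c \mapsto c^\flat$ is constant on connected components and produces an initial object in each one, so each component of the classifying space is contractible. These are two phrasings of the same argument, and your more careful verifications (clopenness of $Z^\flat$, shrinking $W$ to $W^\flat$, compatibility of henselizations along a morphism) are exactly what is needed to see that the paper's "right adjoint" is well defined.
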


\begin{proof} Consider the subcategory 
\[
\mathbf{efr}(X,U)(T)^0 \subset \mathbf{efr}(X,U)(T),
\]
consisting of those cycles $(Z, \Phi)$ such that no (nonempty) connected component of $Z$ factors through $U$. Then $\mathbf{efr}(X,U)(T)^0$ is a category with no non-identity arrows, whence $|N_{\bullet}\mathbf{efr}(X,U)(T)^0|$ is $0$-truncated. The inclusion $\mathbf{efr}(X,U)(T)^0 \to \mathbf{efr}(X,U)(T)$ admits a right adjoint (given by discarding all components of $Z$ that factor through $U$), and hence induces an equivalence on classifying spaces.
The result follows.
\end{proof}

It follows that the canonical map
\[
h^{\efr,n}(X/U) \rightarrow \tau_{\leq 0}h^{\efr,n}(X/U),
\]
is a sectionwise equivalence of spaces.
Combining Lemmas \ref{lem:l-nis}, \ref{lem:nerve} and \ref{lem:discrete}, we have proved the following result.
\begin{theorem} \label{thm:lnis} Let $S$ be a scheme. The map
\[
h^{\efr,n}(X/U) \rightarrow h^{\efr,n}_{\qf}(X,U),
\]
is an $L_{\Nis}$-equivalence.
\end{theorem}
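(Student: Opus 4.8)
The plan is to exhibit the map of the cone theorem as a composite of two maps, each of which we can recognize as an $L_\Nis$-equivalence from the lemmas already established. By Construction \ref{eq:construct-can-colim} the map in question is
\[
h^{\efr,n}(X/U) \xrightarrow{q} \tau_{\leq 0}h^{\efr,n}(X/U) \xrightarrow{p} h^{\efr,n}_\qf(X,U),
\]
where $q$ witnesses the $0$-truncation of the geometric realization \eqref{eq:bar-efr} and $p$ is the canonical map of Construction \ref{eq:construct-can}. So it suffices to see that $q$ and $p$ are both $L_\Nis$-equivalences, after which the composite is one too.

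First I would treat $q$. By Lemma \ref{lem:nerve} the value $h^{\efr,n}(X/U)(T)$ is the classifying space $|N_\bullet \mathbf{efr}(X,U)(T)|$, and by Lemma \ref{lem:discrete} this classifying space is already $0$-truncated. Hence the comparison to its set-theoretic $\pi_0$ — which unwinds to precisely the sectionwise coequalizer defining $\tau_{\leq 0}h^{\efr,n}(X/U)(T)$ — is a sectionwise equivalence of spaces; in particular $q$ is an $L_\Nis$-equivalence. Then I would treat $p$ by directly quoting Lemma \ref{lem:l-nis}, whose hypothesis is exactly the regularity of $S$. Composing the two equivalences gives the theorem.

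The substance of the argument is thus entirely contained in the three lemmas, and if I had to single out the step carrying the real weight it would be Lemma \ref{lem:l-nis}: its surjectivity half is where regularity of $S$ enters (through Zariski's main theorem for quasi-finite maps together with the closed-embedding criterion of \cite[Lemma 4.2]{gnp}), and its injectivity half requires the étale-neighbourhood bookkeeping needed to align two cycles with the same image in $\tau_{\leq 0}h^{\efr,n}(X/U)$. The only mild subtlety on the $\tau_{\leq 0}$-side is matching the abstract $0$-truncation $\pi_0 |N_\bullet \mathbf{efr}(X,U)(T)|$ with the explicit coequalizer presheaf $\tau_{\leq 0}h^{\efr,n}(X/U)$ described after Construction \ref{eq:construct-can}, and checking that all the maps in sight are compatible with the lax module structures and with varying $n$; once that dictionary is in place the proof is formal.
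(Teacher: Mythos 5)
Your proof is correct and matches the paper's own argument exactly: the theorem is stated as an immediate combination of Lemmas \ref{lem:l-nis}, \ref{lem:nerve} and \ref{lem:discrete}, which is precisely the decomposition through $\tau_{\leq 0}h^{\efr,n}(X/U)$ that you spell out. No discrepancies.
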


\subsection{Moving into quasi-finite correspondences}

In order to complete the proof of the cone theorem, we will need the following result.

\begin{theorem} \label{thm:laone} Let $S=Spec(k)$, where $k$ is an infinite field. The inclusion of presheaves
\[
h^{\efr,n}_{\qf}(X,U) \hookrightarrow h^{\efr,n}(X,U),
\]
is an $L_{\A^1}$-equivalence.
\end{theorem}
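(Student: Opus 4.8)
The plan is to exhibit an explicit $\A^1$-homotopy that deforms an arbitrary equationally framed correspondence into $h^{\efr,n}_\qf(X,U)$, in a way that is compatible with the inclusion and stable under composition with the face maps of the bar construction (so that it descends to the relevant presheaves). The starting observation, already illustrated by the worked example over $\A^1 \setminus 0$, is that the failure of quasi-finiteness of $\phi^{-1}(0) \to T$ is a ``generic'' phenomenon: for a correspondence $(Z,(\phi,g),W)$ with $T$ finite-dimensional, the locus where $\phi^{-1}(0)$ has positive-dimensional fibres is proper and meets every fibre in a set disjoint from $Z = \phi^{-1}(0) \cap g^{-1}(Y)$ (since $Z \to T$ is finite). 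The idea is to perturb $\phi$ by adding a generic linear term in the $\A^n$-coordinates: replace $\phi$ by $\phi_t = \phi + t \cdot \ell$, where $\ell: W \to \A^n$ is (the restriction of) a suitably generic linear map $\A^n_T \to \A^n$ and $t$ is the $\A^1$-parameter. For $t = 0$ we recover the original correspondence; for the homotopy to be well-defined one must check that $\phi_t^{-1}(0) \cap g^{-1}(Y)$ stays finite over $\A^1 \times T$ and that $\phi_t^{-1}(0)$ becomes quasi-finite for the generic perturbation — this is where infiniteness of $k$ enters, via a Bertini/general-position argument that produces the required $\ell$ after passing to a suitable (Zariski-local, or Nisnevich-local) situation on $T$.

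The key steps, in order, would be: (1) Reduce to checking the statement sectionwise on henselizations of smooth $k$-schemes in points (as in the proof of Lemma \ref{lem:l-nis}), and further reduce, using that both presheaves are colimits of their level-$n$ pieces and that $L_{\A^1}$ commutes with the bar construction, to producing a homotopy $H: h^{\efr,n}(X,U) \times \A^1 \to h^{\efr,n}(X,U)$ with $H_0 = \id$, $H_1$ landing in $h^{\efr,n}_\qf(X,U)$, and $H$ restricting to a homotopy through $h^{\efr,n}_\qf(X,U)$ on $h^{\efr,n}_\qf(X,U)$ itself — this exhibits the inclusion as an $\A^1$-deformation retract onto its image, hence an $L_{\A^1}$-equivalence. (2) Construct the perturbation $\phi_t = \phi + t\ell$ and verify that $(\phi_t^{-1}(0) \cap g^{-1}(Y), (\phi_t, g), W)$ is a legitimate element of $h^{\efr,n}(X,U)(\A^1 \times T)$: the only subtle point is that $\phi_t^{-1}(0) \cap g^{-1}(Y) \to \A^1 \times T$ remains finite, which follows because $\phi^{-1}(0) \cap g^{-1}(Y)$ was already finite and adding $t\ell$ cannot create new components meeting $g^{-1}(Y)$ near the closed fibre if $\ell$ is chosen to vanish on the original $Z$ (or, after a translation of coordinates, to not interfere there). (3) Prove that for generic $\ell$ — here "generic" is a non-empty Zariski-open condition on the finite-dimensional space of linear maps, which has a $k$-point because $k$ is infinite — the map $\phi_1^{-1}(0) \to T$ is quasi-finite; this is a dimension count combined with generic smoothness, i.e. a Bertini-type argument applied fibrewise, using perfectness of $k$ to guarantee generic smoothness. (4) Check naturality: the homotopy is compatible with the maps $h^{\efr,n}(X,U) \to h^{\efr,n+1}(X,U)$ (so it passes to the colimit) and with restriction along $\Sm_S$-morphisms in $T$, and it is the identity on the already-quasi-finite subpresheaf — all of which are straightforward from the explicit formula, once (2) and (3) are in place.

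The main obstacle is step (3) together with the compatibility constraint from step (2): one must choose the linear perturbation $\ell$ \emph{uniformly} enough that the homotopy is defined on the whole presheaf (or at least compatibly with the simplicial structure of the bar construction), while \emph{generic} enough that $\phi_1^{-1}(0)$ becomes quasi-finite — and these two requirements pull in opposite directions. The resolution is presumably to not ask for a single global $\ell$, but to work locally on $T$ (legitimate since we test on henselian local schemes) and to encode the choice of $\ell$ as part of an auxiliary moduli parameter, showing that the space of valid choices is $\A^1$-contractible (e.g. an affine space minus a closed subset of positive codimension, or more robustly a torsor under such), so that the resulting zig-zag of homotopies still collapses after $L_{\A^1}$. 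This is exactly the kind of "moving lemma" packaged in the cited work of Garkusha--Neshitov--Panin and Druzhinin; the role of the present write-up would be to extract the precise statement needed and recast the general-position argument in the language of presheaves on $\Sm_k$, with the infiniteness and perfectness of $k$ isolated as the inputs to the Bertini step.
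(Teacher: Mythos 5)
Your proposal correctly identifies the shape of the argument — perturb correspondences by an $\A^1$-family of sections to push $\phi^{-1}(0)$ into the quasi-finite locus, with the homotopy fixing $h^{\efr,n}_\qf(X,U)$ — and you correctly diagnose the central tension between uniformity (the homotopy must be defined on the whole presheaf) and genericity (the perturbation must achieve quasi-finiteness). However, there are several gaps, two of them serious.

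First, your reduction in step (1) is invalid: checking a map of presheaves sectionwise on henselizations detects Nisnevich equivalences, not $\A^1$-equivalences. There is no "local" criterion for $L_{\A^1}$-equivalences in this sense. The paper's actual reduction is entirely different and is the key organizational move you are missing: it uses a lifting criterion for maps of spaces against maps of \emph{finite} spaces $K \to J$ (Lemma \ref{lem:spc}), writes $h^{\efr,n}(X,U)$ as the filtered colimit of the subpresheaves $h^{\efr,n}(X,U)^\alpha$ generated by finite collections $\alpha$ of sections, and uses compactness of $K,J$ together with the fact that $L_{\A^1}$ commutes with filtered colimits to reduce the lifting problem to a single stage $\alpha$. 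This is precisely what resolves the uniformity/genericity tension you identify: one never needs a single homotopy on the whole presheaf, only, for each finite set of sections $(T_i, c_i)$, a single path $\gamma: \A^1 \to \Gamma_n(X)$ moving all the $c_i$ into the quasi-finite locus simultaneously while keeping the already-quasi-finite ones there (Lemma \ref{lemm:key}).

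Second, linear perturbations $\phi_t = \phi + t\ell$ are too weak. The paper's moving data $\Gamma_{d,n}(X)$ consists of degree-$d$ polynomial sections of $\mathcal{O}(d)^{\oplus n}$ on $X \times \P^n$, constrained to agree with the distinguished section $\vec{x}$ on the fixed second-order thickening $\mathcal{N}$ of the origin in $\P^n$ (not, as you propose, constrained to vanish on the correspondence-dependent $Z$). One then lets $d \to \infty$. The degree $d$ must be taken large for two reasons: (i) so that the restriction map $\Gamma_{d,n}(X)(k(u)) \to k(u)^{nm}$ is surjective (Serre vanishing, Lemma \ref{lemm:cone-technical-surj}), giving the codimension estimate of Lemma \ref{lem:key}; and (ii) so that one can take $m = \dim T + 2$ in the dimension count of Lemma \ref{lem:good}, which is what forces the bad locus to have codimension $\geq 2$ in $\Gamma_{d,n}(X)$, so that a generic line through $\vec{x}$ avoids it. A fixed finite-dimensional family of linear perturbations cannot supply enough degrees of freedom for this dimension count to close, and your appeal to a ``Bertini/generic smoothness'' argument (where you invoke perfectness) does not appear in the paper's argument at all — infiniteness of $k$ is used only to find a rational point in a nonempty Zariski open in projective space in the final projection step.

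Minor points: the theorem is about fixed $n$, so no compatibility with the stabilization $h^{\efr,n} \to h^{\efr,n+1}$ is needed; and the bar construction and the quotient $h^{\efr,n}(X/U)$ are irrelevant here — they belong to the separate Nisnevich comparison (Theorem \ref{thm:lnis}), which you seem to be conflating with the present $\A^1$-statement.
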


This is a moving lemma in motivic homotopy theory. 

\begin{remark} In \cite{gnp}, this moving lemma was discovered for $X = \A^n$ and $U = \A^n \setminus 0$ which suffices for the purposes of computing the framed motives of algebraic varieties. We will follow the treatment \cite{druzhinin2018framed} which performs the moving lemma for more general pairs.
\end{remark}

For the rest of this section, we work over an infinite field.
We fix the smooth affine scheme $X$, its open subscheme $U$ and closed complement $Y$.
Write $\overline{X}$ for a projective closure of $X$, and $\overline{Y} := \overline{X} \setminus U$.
By considering the Segre embedding, we find a very ample line bundle $\scr O(1)$ on $\P^n \times \overline{X}$ with a section $x_0$ such that $x_0|_{\A^n \times X}$ is non-vanishing.
We also have sections $x_1, \dots, x_n \in H^0(\P^n \times \overline{X}, \scr O(1))$ such that $x_i/x_0|_{\A^n \times X}$ are the usual coordinates on $\A^n$.
Denote by $\scr N \subset \P^n \times \overline{X}$ the closed subscheme which is the first order thickening of $0 \times \overline{Y}$.
Pick $d > 0$.
Set \[ \vec{x}=(x_1x^{d-1}_{0}, x_2x^{d-1}_{0}, \dots, x_n x_0^{d-1}) \in H^0(\P^n \times \overline{X}, \scr O(d)^{\oplus n}) \] and \[ H^0(\P^n \times \overline{X}, \scr O(d)^{\oplus n}) \supset \Gamma_d := \{ \vec{s} \mid \vec{s}|_\scr{N} = \vec{x}|_{\scr N} \}. \]
Note that $\Gamma_d$ is a finite dimensional\footnote{This is the reason for compactifying $X$.} affine $k$-space, which we will view as an affine scheme.

Suppose that $\vec{s}=(s_1, \cdots, s_n) \in \Gamma_{d}(k)$.
Then $\vec{s}|_{\A^n \times X}/x_0^d$ defines a regular map $\A^n \times X \to \A^n$.
Combining with the projection $\A^n \times X \to X$ we obtain \[ f_{\vec{s}}: \A^n \times X \rightarrow \A^n \times X. \]
By construction, $f_{\vec{s}}$ is the identity in the first order neighborhood of $0 \times Y \subset \A^n \times X$.
This has the following significance.

\begin{lemma} \label{lemm:track-zeros}
Let $\varphi: W \to \A^n_X$ be arbitrary.
Set $Z = \varphi^{-1}(0 \times Y) \subset W$.
For $\vec s \in \Gamma(k)$ we have \[ (f_{\vec s} \circ \varphi)^{-1}(0 \times Y) = Z \amalg Z' \] (for some $Z'$ depending on $\vec s$).
\end{lemma}
\begin{proof}
Let $Z_1 = f_{\vec s}^{-1}(0 \times Y)$.
It suffices to prove that $0 \times Y \to Z_1$ is an open (whence clopen) immersion.
Since $f_{\vec s}|_{\scr N} = \id$, we get $Z_1 \cap \scr N = Z$.
In other words, if $I$ is the sheaf of ideals defining $0 \times Y$, then $I|_{Z_1} = I^2|_{Z_1}$.
The result follows by \cite[Tag 00EH]{stacks-project}.
\end{proof}

\begin{construction} \label{cons:moving}
If $(Z, (\phi, g), W) \in  h^{\efr}(X, U)(T)$, then we define \[ \vec{s} \cdot (Z, (\phi, g), W) = ( Z, (f_{\vec{s}}\circ (g, \phi)), W \setminus Z').  \]
This makes sense by Lemma \ref{lemm:track-zeros} and yields in fact an action\todo{details?} \[ \Gamma_d \times h^{\efr,n}(X, U) \to h^{\efr,n}(X, U),\, (\vec{s}, \Phi) \mapsto \vec{s} \cdot \Phi. \]
\end{construction}

Multiplication by $x_0$ induces an injection $\Gamma_d \to \Gamma_{d+1}$.
Write $\Gamma_\infty = \bigcup_d \Gamma_d$.
Note that the action of $\Gamma_d$ on $h^{\efr,n}(X,U)$ factors through multiplication by $d$ and hence induces an action by $\Gamma_\infty$.

We need to be able to draw paths in $\Gamma_{d}$ with controlled properties.
This is made precise by the next result, whose proof will be discussed in \S\ref{sec:proofs-main}.
\begin{lemma} \label{lemm:key}
Let $T_1, \dots, T_n \in \Sm_k$, $c_i \in h^{\efr,n}(X, U)(T_i)$, $V_i \subset \Gamma_\infty$ finite dimensional.

Then there exists $\vec \gamma \in \Gamma_\infty \setminus \cup_i V_i$ such that, for all $i$, if $V'_i \subset \Gamma_\infty$ is the cone on $V_i$ with tip $\vec \gamma$, then for all $\vec v \in V_i' \setminus V_i$ we have $\vec v \cdot c_i \in h^{\efr,n}_{\qf}(X,U)(T_i)$.
Moreover we can arrange that if $\vec x \in V_i'$ then already $\vec x \in V_i$.
\end{lemma}
\begin{remark}
Taking $V_i=\{\vec x\}$, the lemma in particular asserts that we can use paths in $\Gamma_\infty$ to make correspondences quasi-finite.
The more general case $V_i \ne \{\vec x\}$ is used to show that these paths are essentially unique.
\end{remark}

\subsection{Filtration and finishing the proof}

Granting ourselves the above lemma, we finish the proof of the cone theorem. 

We begin with some preparations.
Let $A$ be a category and $D: A \to \PSh(\Sm_k)$ be an $A$-indexed diagram.
We construct a simplicial object \[ Tel_A(D)_\bullet \in \Fun(\Delta^\op, \PSh(\Sm_k)) \] by setting \[ Tel_A(D)_n = \coprod_{i_0 \to i_1 \to \dots \to i_n \in \scr C} D(i_0). \]
The simplicial structure maps involve the cosimplicial structure maps in the standard cosimplicial category $[\bullet]$ and the functoriality of $D$.
This is a standard construction, for which see e.g. \cite[\S4]{dugger2008primer}.
The standard cosimplicial affine scheme $\A^\bullet$ yields a functor $S_{\A}^\bullet: \PSh(\Sm_k) \to \Fun(\Delta^\op, \PSh(\Sm_k))$ which has a left adjoint $|\ph|_{\A^1}$.

\begin{lemma} \label{lemm:A1-hocolim}
The geometric realization $|Tel_A(D)_\bullet|_{\A^1}$ is $\A^1$-equivalent to $\colim_A D$.
\end{lemma}
\begin{proof}
For $F \in \PSh(\Sm_k)$ $\A^1$-invariant we have \[ \Map(|X_\bullet|_{\A^1},F) \wequi \Map(X_\bullet, S_{\A}^\bullet F) \wequi \Map(X_\bullet, cF) \wequi \Map(|X_\bullet|, F), \] where $cF$ denotes the constant simplicial presheaf.
The result follows since the usual geometric realization of $Tel_A(D)_\bullet$ is a standard model for the sectionwise homotopy colimit of $D$ \cite[\S4]{dugger2008primer}.
\end{proof}

\begin{proof} [Proof of Theorem~\ref{thm:laone}]
We shall supply a filtered poset $A$ as well as systems of subpresheaves \[ \{h^{\efr,n}(X,U)^\alpha\}_{\alpha \in A} \subset h^{\efr,n}(X,U), \quad \{h^{\efr,n}_{\qf}(X,U)^\alpha\}_{\alpha \in A} \subset h^{\efr,n}_{\qf}(X,U) \] such that \[ h^{\efr,n}(X,U) = \bigcup_{\alpha \in A} h^{\efr,n}(X,U)^\alpha \text{ and } h^{\efr,n}_{\qf}(X,U) = \bigcup_{\alpha \in A} h^{\efr,n}_{\qf}(X,U)^\alpha. \]
Next we construct for $\alpha = (\alpha_0 \le \dots \le \alpha_n), \alpha_i \in A$ maps \[ r_\alpha: h^{\efr,n}(X,U)^{\alpha_0} \times \A^n \to h^{\efr,n}_{\qf}(X,U) \] and \[ H_\alpha: \A^1 \times h^{\efr,n}(X,U)^{\alpha_0} \times {\A^n} \to h^{\efr,n}(X,U), \] \[ K_\alpha: \A^1 \times h^{\efr,n}_{\qf}(X,U)^{\alpha_0} \times \A^n \to h^{\efr,n}_{\qf}(X,U), \] all compatible with the (co)simplicial structure maps.
Applying $|\ph|_{\A^1}$, we obtain via Lemma \ref{lemm:A1-hocolim} a map \[ |r_\bullet|_{\A^1}: |Tel_A(h^{\efr,n}(X,U)^{(\ph)}|_{\A^1} \wequi \colim_A h^{\efr,n}(X,U)^{(\ph)} \wequi h^{\efr,n}(X,U) \to h^{\efr,n}_{\qf}(X,U). \]
The construction is arranged in such a way that $|H_\bullet|_{\A^1}$ and $|K_\bullet|_{\A^1}$ exhibit homotopies making the following triangles commute
\begin{equation*}
\begin{tikzcd} 
h^{\efr,n}_{\qf}(X,U) \ar[r,"\id"] \ar[d] & h^{\efr,n}_{\qf}(X,U)  &\quad& h^{\efr,n}(X,U) \ar[r,"\id"] \ar[d, "|r_\bullet|_{\A^1}"] & h^{\efr,n}(X,U)\\
h^{\efr,n}(X,U) \ar[ur, "|r_\bullet|_{\A^1}" swap] & && h^{\efr,n}_{\qf}(X,U) \ar[ur].
\end{tikzcd}
\end{equation*}

Set \[ \tilde A = \{\vec s,  \{(T_1, c_1, V_1), \dots, (T_n, c_n, V_n)\} \mid \vec s \in \Gamma_\infty, V_i \subset \Gamma_\infty, T_i \in \Sm_k, c_i \in h^{\efr,n}(X,U)(T_i) \}. \]
Here $V_i$ is a finite-dimensional, affine subspace.
Let $A \subset \tilde A$ be the subset of elements having the following properties:
\begin{itemize}
\item $\vec s \in V_i$, $\vec x \not\in V_i$,
\item for all $i$ and all $\vec {s'} \in V_i^{\vec x} \setminus \{\vec x\}$ we have $\vec {s'} \cdot c_i \in h^{\efr,n}_{\qf}(X,U)(T_i)$.
\end{itemize}
Here $V_i^{\vec x}$ denotes the affine subspace generated by $V_i$ and $\vec x$.

For $\alpha = (\vec s, M) \in A$ we denote by $h^{\efr,n}(X,U)^\alpha \subset h^{\efr,n}(X,U)$ the subpresheaf generated by the sections $c$ for $(T,c,V) \in M$ (ignoring the $V$ component), and similarly $h^{\efr,n}_{\qf}(X,U)^\alpha$ is the subpresheaf generated by those $c$ which happen to be quasi-finite.
We put an ordering on $A$ by declaring that $(\vec s, M) \le (\vec t, N)$ if for all $(T,c,V) \in M$ we have $(T,c,V^{\vec t}) \in N$.
It is immediate from Lemma \ref{lemm:key} that this makes $A$ into a filtered poset and that the filtrations of $h^{\efr,n}(X,U)$ and $h^{\efr,n}_{\qf}(X,U)$ are exhaustive.

With this preparation out of the way, let $\alpha = (\alpha_0 \le \alpha_1 \le \dots \le \alpha_n) \in A$, with $\alpha_i = (\vec s_i, M_i)$.
We set
\begin{gather*}
  r_\alpha(c, \lambda) = \vec s(\lambda) \cdot c \\
  H_\alpha(t, c, \lambda) = (t\vec s(\lambda) + (1-t)\vec x) \cdot c \\
  K_\alpha(t, c, \lambda) = (t\vec s(\lambda) + (1-t)\vec x) \cdot c.
\end{gather*}
Here $\lambda = (\lambda_1, \dots, \lambda_n) \in \A^n$ and \[ \vec s(\lambda) = (1-\sum_i \lambda_i) \vec s_0 + \sum_i \lambda_i \vec s_i. \]
The cosimplicial structure on $\A^\bullet$ comes from viewing $\A^n$ as the subspace of $\A^{n+1}$ where the sum of the coordinates is $1$.
With this interpretation, it is clear that this construction is compatible with the simplicial structure.
It remains to show that the maps $r_\alpha$ and $K_\alpha$ land in $h^{\efr,n}_{\qf}(X,U)$.
Let $(T,c,V) \in M_0$.
Let $V'$ be the affine subspace generated by $V$ and all the $s_i$.
One checks by induction that $(T,c,V') \in M_n$.
The required quasi-finiteness follows (recall that by assumption, $\vec v \cdot c$ is quasi-finite for $\vec v \in (V')^{\vec x} \setminus \vec x \supset V'$).
\end{proof}

\subsection{Proof of Lemma~\ref{lemm:key}} \label{sec:proofs-main}
We now prove the key moving lemma, following arguments of Druzhinin.
We will in fact establish the following stronger result.
\begin{theorem}\label{thm:moving}
Let $T \in \Sm_k$ $c \in h^{\efr,n}(X, U)(T)$, $V \subset \Gamma_{d'}$.
There exists $d''>d'$ such that for all $d \ge d''$, there is an open, non-empty subset $U_d \subset \Gamma_d$ of ``allowable cone points''.
(That is, any $\vec \gamma \in U_d$ has the required properties for the single correspondence $c$.)
\end{theorem}
Lemma~\ref{lemm:key} follows from this by applying the Theorem to each $(T_i,c_i,V_i)$ and picking a rational point in the intersection of the sets $U_d$ obtained (which is possible because this intersection is a non-empty, open subset of an affine space and $k$ is infinite).

We spend the rest of the section proving this result.
Fix \[ c = (W, (\phi, g), Z) \in h^{\efr}(X,U)(T). \]
The canonical map (induced by $W \to T$ and $(\phi, g): W \to \A^n \times X$) \[ \psi: W \to T \times \A^n \times X \] is finite over $T \times 0 \times Y$.
Since the quasi-finite locus is open \cite[Tag 01TI]{stacks-project}, there exists an open neighborhood $W' \subset W$ of $Z$ such that $\psi|_{W'}$ is quasi-finite.
Replacing $W$ by $W'$, we may assume that $\psi$ is quasi-finite.
Let $m>0$ and consider the map \[ \psi^m: W^{\times_T m} \to T \times (\A^n \times X)^m. \]
It is still quasi-finite.
Define \[ T \times (\A^n \times X)^m \supset \scr E_m := \{ (t, p_1, \dots, p_m) \mid t \in T, p_i \in \A^n \times X, p_i \ne p_j, p_i \not\in 0 \times Y \}. \]
Consider further \begin{gather*} W^{\times_T m} \times \Gamma_d \supset B_{m,d} := \{(w_1, \dots, w_m, \vec{s}) \mid \psi(w_1, \dots, w_m) \in \scr E_m, (f_{\vec s} \circ (\phi, g))(w_i) \in 0 \times (X \setminus Y), \\ (f_{\vec s} \circ (\phi, g))^{-1}(0 \times (X \setminus Y)) \text{ not quasi-finite at $w_i$} \} \end{gather*} and \[ T \times \Gamma_d \supset B_d := \{(t,\vec{s}) \mid (f_{\vec s} \circ (\phi, g))^{-1}(0 \times (X \setminus Y)) \text{ not quasi-finite over $t$} \}. \]
There is an evident map $B_{m,d} \to B_d$.
We shall prove the following:
\begin{enumerate}
\item For any $m, d$, the map $B_{m,d} \to B_d$ is surjective with fibers of dimension $\ge m$.
\item For fixed $m$, and $d=d(m)$ sufficiently large, we have $\dim B_{m,d} \le \dim T + \dim \Gamma_d$.
\end{enumerate}
We deduce that \[ \dim B_d \stackrel{(1)}{\le} \dim B_{m,d} - m \stackrel{(2)}{\le} \dim \Gamma_d + \dim T - m. \]
Choosing $m \ge \dim T + \dim V + 2$, we can ensure that \[ \dim B_d \le \dim \Gamma_d - \dim V - 2. \]
Write $p: B_d \amalg \{\vec x\} \to \Gamma_d$ for the projection and inclusion.
Write \[ q: (B_d \amalg \{\vec x\}) \times V \times \A^1 \to \Gamma_d, (b, v, t) \mapsto tp(b) + (1-t)v. \]
Then the image of $q$ has dimension $<\dim \Gamma_d$, and so the complement of the closure of the image of $q$ is a non-empty open $U_d \subset \Gamma_d$.

\begin{proof}[Proof of Theorem \ref{thm:moving}]
Let $\vec s \in \Gamma_d \setminus p(B_d)$.
We claim that $\vec s \cdot c \in h^{\efr,n}_\qf(X,U)$.
Indeed if $\varphi = f_{\vec s} \circ (\phi, g)$ then we know that $\varphi^{-1}(0 \times Y) = Z \amalg Z'$.
We also know that $\varphi^{-1}(0 \times (X \setminus Y))$ is quasi-finite over $T$.
Replacing $W$ by $W \setminus Z'$ we arrange that $\varphi^{-1}(0 \times Y) = Z$ is finite over $T$.
The claim follows.

Now let $\vec \gamma \in U_d$, $v \in V$, $t \in \A^1$.
If $t\vec \gamma + (1-t)v = b \in p(B_d) \cup \{\vec x\}$ with $t \ne 0$ then \[ \vec \gamma = 1/t\, b + (t-1)/t\, v, \] which contradicts the construction of $U_d$.
In other words, if $V'$ is the cone on $V$ with tip $\vec \gamma$ and $b \in V' \setminus V$, then $b \not\in B$ and so $b \cdot c$ is quasi-finite, as needed.
Similarly $\vec x \not\in V'$ unless $\vec x \in V$.
\end{proof}

The main idea for proving (1) is that if a morphism (of finite type, say) is not quasi-finite over some point, then the fiber must have dimension $\ge 1$.
Taking $m$-fold products, we obtain something of dimension $\ge m$.
\begin{proof}[Proof of (1).]
We may base change to an algebraically closed field, and it suffices to treat fibers over closed (hence rational) points.\NB{details}
Thus let $t \in T, \vec s \in \Gamma_d$ be closed points with $(t,\vec s) \in B_d$.
Set $\varphi = f_{\vec s} \circ (\phi, g): W \to \A^n \times X$, so that $A := \varphi^{-1}(0 \times (X \setminus Y))$ is not quasi-finite over $t$.
Let $A_1 \subset A$ be a positive dimensional component of the fiber over $t$ (which exists because $A$ is not quasi-finite over $t$).
Since $\psi$ is quasi-finite, \[ B := \psi(A_1) \subset \{t\} \times (\A^n_X \setminus 0_Y) \subset T \times \A^n \times X \] is infinite.
By Chevalley's theorem \cite[Tag 054K]{stacks-project}, $B$ is a finite disjoint union of locally closed subsets, and hence contains an infinite subset $B_0 \subset B$ which is a scheme.
Being of finite type over a field, $B_0$ has positive dimension.
Let $C \subset B_0^m$ be the subscheme of distinct points.
Since $\dim B_0 \ge 1$ we have $\dim C \ge m$.
By construction the image of $(B_{m,d})_{t,\vec s} \to \scr E_m$ contains $C$.
It follows that \[ \dim (B_{m,d})_{t, \vec s} \ge \dim C \ge m, \] as needed.
\end{proof}

For proving (2), we may (and will) ignore the quasi-finiteness condition in the definition of $B_{m,d}$.
The main idea is that the condition $f_{\vec s}((\phi,g)(w)) \in 0 \times X$ is equivalent to the vanishing of $n$ sections at $w$, and hence $m$ such conditions should have codimension $mn = \dim W^{\times_T m} - \dim T$.
\begin{proof}[Proof of (2).]
We may base change to an algebraically closed field.
Let \[ W^{\times_T m} \supset \scr W_m := \psi^{-1}(\scr E_m) \] so that we have a map $q: B_{m,d} \to \scr W_m$.
Since $\dim \scr W_m \le \dim T + mn$, it will suffice to show that the fibers of $q$ (over closed points) have dimension $\le \dim \Gamma - mn$.
Let $(w_1, \dots, w_m) \in \scr W_m$ have image $(p_1, \dots, p_m) \in \scr E_m$.
Put \[ \Gamma_{d,(p_1, \dots, p_m)} = \{\vec s \in \Gamma_d \mid f_{\vec s}(p_i) \in 0 \times X\}. \]
Then $q^{-1}(w_1, \dots, w_m) \subset \Gamma_{d,(p_1,\dots,p_m)}$ and hence it suffices to show that $\dim \Gamma_{d,(p_1, \dots, p_m)} \le \dim \Gamma_d - mn$.
We have an exact sequence \[ 0 \to \Gamma_{d,(p_1,\dots,p_m)} \to \Gamma_d \xrightarrow{ev} \bigoplus_i H^0(p_i, \scr O(d)^{\oplus n}). \]
Since the right hand term has dimension $mn$, it suffices to prove that the evaluation map $ev$ is surjective.
Set $\scr K = \ker(\scr O(d)^{\oplus n} \to \scr O(d)^{\oplus n}|_{\scr N})$, so that $\Gamma_d = \{\vec x\} + H^0(\P^n \times \overline{X}, \scr K)$.
By construction $p_i \not\in \scr N$, and hence $\scr K|_{p_i} = \scr O(d)^{\oplus n}|_{p_i}$.
The result thus follows from Lemma \ref{lem:standard} below (applied with $\scr F = \scr K$).
\end{proof}

We used the following well-known result.\todo{reference??}
\begin{lemma} \label{lem:standard}
Let $X$ be a projective scheme over a field, $\scr F$ a coherent sheaf on $X$ and $m \ge 0$.
There exists $N$ such that for all $d \ge N$ and distinct rational points $p_1, \dots, p_m \in X$, the map \[ H^0(X, \scr F(d)) \to \bigoplus_i H^0(p_i, \scr F(d)) \] is surjective.
\end{lemma}
\begin{proof}
Replacing $\scr F$ by the pushforward along an embedding of $X$ into projective space, we may assume that $X = \P^n$.
Given a surjection $\scr F' \to \scr F$, the result for $\scr F'$ implies the one for $\scr F$.
The result for $\scr F_1, \scr F_2$ implies it for $\scr F_1 \oplus \scr F_2$.
Hence it suffices to prove the result for $\scr F = \scr O$ (use \cite[Tag 01YS]{stacks-project}).
We can find $L_{ij} \in H^0(\P^n, \scr O(1))$ such that $L_{ij}(p_i) = 0$ but $L_{ij}(p_j) \ne 0$.
Then for fixed $j$, the section \[ s_j = \prod_{i \ne j} L_{ij} \in H^0(\P^n, \scr O(m-1)) \] has $s_j(p_j) \ne 0$ but $s_j(p_i) = 0$ for all $i \ne j$.
This shows that $N=m-1$ works (in this case).
\end{proof}

\section{The cancellation theorem}
\label{sec:cancellation}
Primary sources: \cite{EHKSY1,voevodsky2002cancellation,agp}.

After these lecture notes were written, some of the ideas from this section were used in \cite{bachmann-cancel}; that work may also serve as a somewhat more formal exposition of some of the ideas presented here.

\subsection{Group-complete framed spaces}
\begin{lemma}[\cite{EHKSY1}, Proposition 3.2.10(iii)]
The category $\Spc^\fr(S)$ is semiadditive.
\end{lemma}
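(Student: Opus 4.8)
The plan is to propagate semiadditivity through the three stages by which $\Spc^\fr(S)$ is built: the framed correspondence category $\Cor^\fr(S)$, its nonabelian derived category $\PSh_\Sigma(\Cor^\fr(S))$, and the motivic localization. First I would check that $\Cor^\fr(S)$ itself is semiadditive. Being a category of (framed) correspondences over $\Sm_S$, it is pointed with zero object $\gamma(\emptyset_+) = *$, and for $X, Y \in \Sm_S$ the canonical map $\gamma(X_+) \sqcup \gamma(Y_+) \to \gamma(X_+) \times \gamma(Y_+)$ is an equivalence: a framed correspondence out of $X \sqcup Y$ splits uniquely along the open-and-closed decomposition of its support, so $X \sqcup Y$ is at once the coproduct and the product of $X$ and $Y$ in $\Cor^\fr(S)$. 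Since $\gamma$ is essentially surjective, $\Cor^\fr(S)$ is semiadditive. (This is what is verified in \cite{EHKSY1}; the general mechanism is that $\Cor(\scr C)$ is semiadditive whenever $\scr C$ is extensive.)

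Next I would invoke the formal fact that $\PSh_\Sigma$ of a small semiadditive $\infty$-category $\scr A$ is semiadditive. Indeed, every object of $\scr A$ acquires a canonical commutative-monoid structure from its fold and diagonal maps, and this structure is automatically respected by product-preserving functors; hence the restriction $\Fun^\times(\scr A^\op, \CMon(\Spc)) \to \Fun^\times(\scr A^\op, \Spc) = \PSh_\Sigma(\scr A)$ is an equivalence, so $\PSh_\Sigma(\scr A)$ is tensored over $\CMon(\Spc)$ and in particular semiadditive. Applied to $\scr A = \Cor^\fr(S)$, this disposes of the second stage.

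Finally, $\Spc^\fr(S)$ is the localization of $\PSh_\Sigma(\Cor^\fr(S))$ at the strongly saturated class $W$ generated by the $\gamma^*$-images of motivic equivalences, and I would deduce semiadditivity from the general principle that a localization of a presentable semiadditive category at a class closed under binary products is again semiadditive. So the point to verify is that $L := L_W$ preserves finite products: this holds for Nisnevich-local equivalences since Nisnevich localization is left exact, and for $\A^1$-equivalences since $\ph \times Z$ preserves colimits and $\A^1$-homotopies and so carries $\A^1$-equivalences to $\A^1$-equivalences. Granting this, the terminal object is $W$-local, hence $\Spc^\fr(S)$ is pointed; and applying $L$ --- which preserves finite coproducts as a left adjoint and finite products by the above --- to the biproduct equivalence in $\PSh_\Sigma(\Cor^\fr(S))$ exhibits $Lc \sqcup Lc' \xrightarrow{\sim} Lc \times Lc'$. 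Hence $\Spc^\fr(S)$ is semiadditive.

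I expect the only genuinely content-bearing step to be the first one: semiadditivity of the bare framed correspondence category rests on the precise way framed correspondences decompose over disjoint unions of schemes. Stages two and three are formal once that input is available, which is why the statement is simply quoted from \cite{EHKSY1}.
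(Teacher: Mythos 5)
Your reconstruction is correct and follows the same three-stage route that underlies the citation (the paper gives no proof of its own, only the pointer to \cite{EHKSY1}, Proposition 3.2.10(iii)): semiadditivity of $\Cor^\fr(S)$ via the decomposition of framed correspondences over disjoint unions, formal passage to $\PSh_\Sigma$, and then to the localization. You rightly isolate the first stage as the only content-bearing one.

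One remark on stage (iii): verifying that $L_W$ preserves finite products, or that $W$ is closed under binary products, is unnecessary. A reflective (accessible) localization of a presentable semiadditive $\infty$-category is automatically semiadditive: the zero object of $\scr C$ is terminal, hence $L$-local, so $L\scr C$ is pointed with the same zero object; and for $L$-local $X, Y$ the biproduct $X \oplus Y$ in $\scr C$ is a product of local objects, hence itself local, so that $X \sqcup_{L\scr C} Y = L(X \sqcup_{\scr C} Y) \wequi X \sqcup_{\scr C} Y \wequi X \times_{\scr C} Y = X \times_{L\scr C} Y$, and one checks this chain is the canonical fold/diagonal map. This makes stages (ii) and (iii) purely formal consequences of (i), with no appeal to the particular shape of the Nisnevich or $\A^1$-local classes.
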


It follows that, for every $\scr X \in \Spc^\fr(S)$ and $X \in \Sm_S$, $\pi_0 \scr X(X)$ is an abelian monoid.
\begin{definition}
We call $\scr X$ group-complete (or grouplike) if $\pi_0 \scr X(X)$ is, for every $X \in \Sm_S$.
We denote by $\Spc^\fr(S)^\gc \subset \Spc^\fr(S)$ the subcategory of group-complete spaces.
\end{definition}

The group-complete spaces are closed under limits and filtered colimits (in fact all colimits), and hence the inclusion $\Spc^\fr(S)^\gc \subset \Spc^\fr(S)$ admits a left adjoint $\scr X \mapsto \scr X^\gc$ which is easily seen to be symmetric monoidal.
The functor $\Omega^\infty: \SH(S) \wequi \SH^\fr(S) \to \Spc^\fr(S)$ has image contained in $\Spc^\fr(S)^\gc$\NB{justification?}.
It follows that $\Sigma^\infty: \Spc^\fr(S) \to \SH^\fr(S) \wequi \SH(S)$ inverts group completions and so factors through a symmetric monoidal, cocontinuous functor \[ \Sigma^\infty: \Spc^\fr(S)^\gc \to \SH(S). \]

The following is the main result.
\begin{theorem}[$\P^1$-cancellation] \label{thm:cancel-main}
If $k$ is a perfect field, then \[ \Sigma^\infty: \Spc^\fr(k)^\gc \to \SH(k) \] is fully faithful.
\end{theorem}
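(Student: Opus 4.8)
The plan is to factor $\Sigma^\infty$ through the two stabilizations that build $\SH(k)$ out of $\Spc^\fr(k)$, handling the simplicial and the $\Gm$-direction separately. By the reconstruction theorem (Theorem~\ref{thm:reconstruction}) we have $\SH(k) \wequi \SH^\fr(k) = \Spc^\fr(k)[\gamma^*(\P^1)^{-1}]$; since $\P^1 \wequi S^1 \wedge \Gm$ in $\Spc(k)_*$ (hence in $\Spc^\fr(k)$) and $S^1$-stabilization is smashing, this localization factors as
\[ \Sigma^\infty \colon \Spc^\fr(k)^\gc \xrightarrow{\Sigma^\infty_{S^1}} \SH^{S^1,\fr}(k) \xrightarrow{\Sigma^\infty_\Gm} \SH^{S^1,\fr}(k)[\Gm^{-1}] \wequi \SH(k), \]
where $\SH^{S^1,\fr}(k) := L_\stab \Sp^\N(\Spc^\fr(k), S^1)$ is the stabilization of framed motivic spaces in the simplicial direction (see \cite{EHKSY1}). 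As a composite of fully faithful functors is fully faithful, it suffices to show that $\Sigma^\infty_{S^1}$ is fully faithful on $\Spc^\fr(k)^\gc$ and that $\Sigma^\infty_\Gm$ is fully faithful on the essential image of the former.

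\emph{The simplicial direction.} The $\infty$-category $\Spc^\fr(k)$ is presentable and semiadditive (by the lemma quoted above), and for such a category $S^1$-stabilization restricts to an equivalence between the group-complete objects and the effective (that is, connective) objects of the stabilization, via $\Sigma^\infty_{S^1}$ and $\Omega^\infty_{S^1}$ --- this is the recognition principle in the semiadditive setting (see \cite{EHKSY1}). In particular $\Sigma^\infty_{S^1}$ is fully faithful on $\Spc^\fr(k)^\gc$, with essential image the effective framed $S^1$-spectra, so we are reduced to showing that $\Sigma^\infty_\Gm$ is fully faithful on effective framed $S^1$-spectra.

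\emph{The $\Gm$-direction.} The object $\Gm$ is $\ell$-symmetric in $\SH^{S^1,\fr}(k)$ for some $\ell \ge 2$ (the cyclic permutation of $\Gm^{\wedge \ell}$ is $\A^1$-homotopic to the identity, as for $\P^1$), so by \cite[Corollary~2.22]{robalo} we have $\SH^\fr(k) \wequi L_\stab \Sp^\N(\SH^{S^1,\fr}(k), \Gm)$, and since $\Gm$ is compact, $\Omega^\infty_\Gm \Sigma^\infty_\Gm F \wequi \colim_m \Omega_\Gm^m \Sigma_\Gm^m F$. Full faithfulness of $\Sigma^\infty_\Gm$ on effective $F$ is thus equivalent to the unit $F \to \colim_m \Omega_\Gm^m \Sigma_\Gm^m F$ being an equivalence for all effective $F$; as $\Gm^{\wedge m} \wedge F$ is again effective, iterating reduces this to the single statement that the canonical unit map
\[ u_G \colon G \longrightarrow \Omega_\Gm \Sigma_\Gm G \]
is an equivalence for every effective framed $S^1$-spectrum $G$. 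By the abstract cancellation theorem (Theorem~\ref{thm:abstract-cancellation}), which exploits the $\ell$-symmetry of $\Gm$, it is enough to produce a natural retraction $\rho \colon \Omega_\Gm \Sigma_\Gm \to \id$ of $u$ (i.e.\ with $\rho \circ u \wequi \id$) on effective framed $S^1$-spectra. Producing $\rho$ and checking the retraction identity is exactly Voevodsky's cancellation theorem transported to framed correspondences: following \cite{agp} (which adapts the argument of \cite{voevodsky2002cancellation} for finite correspondences), one reduces to generators, writes down explicit framed correspondences implementing a one-sided inverse to smashing with $\Gm$, and verifies the relevant relations up to $\A^1$-homotopy by means of an explicit contracting homotopy. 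To convert these presheaf-level constructions and identities into statements in $\SH^{S^1,\fr}(k)$ one needs that the framed presheaves in play are strictly $\A^1$-invariant with Nisnevich-local cohomology presheaves; these strict homotopy invariance inputs are collected in \S\ref{sec:strict-A1}.

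The main obstacle is this last step: the $\Gm$-cancellation for framed correspondences, together with the strict homotopy invariance it rests on. Everything before it is either formal (the factorization of $\Sigma^\infty$, the recognition principle for semiadditive $\infty$-categories, Robalo's symmetric-spectrum formalism) or already established (the reconstruction theorem); the genuine work is in constructing the cancellation correspondence and verifying its homotopies --- and it is there that the hypothesis that $k$ is an infinite perfect field enters, through the moving lemmas and Gabber/Nisnevich-presentation techniques underlying strict homotopy invariance.
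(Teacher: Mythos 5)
Your proposal takes essentially the same approach as the paper: decompose $\P^1 \wequi S^1 \wedge \Gm$, handle the $S^1$-direction via semiadditivity of $\Spc^\fr(k)$ and a group-complete recognition principle, and establish $\Gm$-cancellation by applying the abstract cancellation theorem to a Voevodsky-style lax-module retraction, with strict homotopy invariance supplying the analytic input. Two minor remarks: you run the $\Gm$-step in $\SH^{S^1,\fr}(k)$, whereas the paper proves $\id \to \Omega_\Gm\Sigma_\Gm$ is an equivalence directly in $\Spc^\fr(k)^\gc$ (cosmetically equivalent given $S^1$-cancellation); and note that strict homotopy invariance (Corollary \ref{cor:strictly-invariant}) is already needed in the $S^1$-direction --- the recognition principle identifies $\PSh_\Sigma(\Cor^\fr(k))^\gc$ with connective sectionwise spectra formally, but showing that $L_\Nis\Sigma_{S^1}\scr Y$ remains motivically local and group-complete is not, so the $S^1$-step is not purely formal as your sketch suggests.
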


\begin{remark}
The essential image of $\Sigma^\infty$ is closed under colimits and known as the subcategory of \emph{very effective spectra}.
\end{remark}

\begin{remark}
The theorem is equivalent to showing that for $\scr X, \scr Y \in \Spc^\fr(k)^\gc$ we have $\Map(\scr X, \scr Y) \wequi \Map(\Sigma_{\P^1} \scr X, \Sigma_{\P^1} \scr Y)$, and this is further equivalent to showing that \[ \scr Y \to \Omega_{\P^1} \Sigma_{\P^1} \scr Y \] is an equivalence.
Here $\Sigma_\P^1: \Spc^\fr(k)^\gc \to \Spc^\fr(k)^\gc$ is the functor of tensor product with the image of $\P^1$ in $\Spc^\fr(k)^\gc$.
\end{remark}

Since $\P^1 \wequi S^1 \wedge \Gm$, it suffices to prove separate statements for these two suspensions.
This is how we shall establish Theorem \ref{thm:cancel-main}.

\subsection{$S^1$-cancellation}
\begin{proposition} \label{prop:S1-cancellation}
For $\scr X \in \Spc^\fr(k)^\gc$, the canonical map \[ \scr X \to \Omega_{S^1}\Sigma_{S^1}  \scr X \] is an equivalence.
\end{proposition}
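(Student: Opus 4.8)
Both $\scr X$ and $\Omega_{S^1}\Sigma_{S^1}\scr X$ are connective objects of $\Spc^\fr(k)$ — the latter because $\Sigma_{S^1}\scr X = L_\mot(S^1 \wedge \scr X)$ is $1$-connective ($L_\mot$ preserves $n$-connectivity, being computed by a colimit involving only products with the $\A^1$-contractible, hence connected, affine spaces), so its $S^1$-loops are connective — and $\pi_0\scr X$ is a sheaf of abelian groups since $\Spc^\fr(k)$ is semiadditive and $\scr X$ is group-complete. Hence the map in the statement is an equivalence as soon as it induces isomorphisms on Nisnevich homotopy sheaves, i.e. as soon as the suspension map
\[ \pi^{\Nis}_n(\scr X) \longrightarrow \pi^{\Nis}_{n+1}(\Sigma_{S^1}\scr X) = \pi^{\Nis}_{n+1}\bigl(L_\mot(S^1 \wedge \scr X)\bigr) \]
is an isomorphism for every $n \ge 0$. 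This is the only thing that needs proof.

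The plan is to deduce this from strict $\A^1$-invariance. The corresponding statement is \emph{false} for arbitrary (unframed) sheaves: if $\scr X$ were, say, a discrete sheaf, then $\pi_1$ of the presheaf $S^1 \wedge \scr X$ is a free, non-abelian group on the sections of $\scr X$, and $L_\mot$ does not repair this (James splitting). What saves us is that the framed transfers on $\scr X$ are inherited by $S^1 \wedge \scr X$, so that the Nisnevich homotopy sheaves of $S^1 \wedge \scr X$ (and of its motivic localization) are framed, hence strictly $\A^1$-invariant over the perfect field $k$; consequently $L_\mot$ neither abelianizes nor truncates them, and the motivic Freudenthal suspension map is already an isomorphism in the stable range — which, for the $0$-connective input $\scr X$, begins exactly at $\pi_{n+1}$ of the suspension. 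I expect this framed stable-connectivity input to be the main obstacle; it is precisely the kind of assertion that is reduced to the literature (Garkusha--Panin and collaborators, \cite{garkusha2014framed}) in \S\ref{sec:strict-A1}.

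Equivalently — and this is perhaps the most efficient packaging — one can argue via the $S^1$-stabilization. Since $S^1$ is symmetric in $\Spc^\fr(k)$ (the cyclic permutation of $(S^1)^{\wedge 3} = S^3$ has degree $+1$, hence is homotopic to the identity), Remark~\ref{rmk:symmetric-stabilization} identifies the associated stable category with $\Spc^\fr(k)[(S^1)^{-1}]$, and the latter carries a $t$-structure whose connective part is generated under colimits and extensions by the $\Sigma^\infty_{S^1}$ of framed motivic spaces. The functor $\Sigma^\infty_{S^1}$ inverts group completions — because its right adjoint $\Omega^\infty_{S^1}$ lands in group-complete objects, a loop object being group-complete, exactly as in the $\P^1$-stable discussion above — and hence restricts to a colimit-preserving functor $\Spc^\fr(k)^\gc \to (\Spc^\fr(k)[(S^1)^{-1}])_{\ge 0}$, which is essentially surjective onto the connective part for the same reason. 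Full faithfulness of this restriction is once more the framed stable-connectivity statement; granting it, $\Sigma_{S^1}$ corresponds to the shift $[1]$ and $\Omega_{S^1}$ on group-complete objects to $\tau_{\ge 0}[-1]$, so that $\Omega_{S^1}\Sigma_{S^1} \wequi \tau_{\ge 0}[1][-1] = \id$ and Proposition~\ref{prop:S1-cancellation} follows.
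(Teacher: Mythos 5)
Neither route you sketch actually closes. In the Freudenthal route, strict $\A^1$-invariance of the homotopy sheaves of $S^1\wedge\scr X$ does not upgrade the suspension map to an isomorphism in the range you need. If $S^1\wedge\scr X$ means the sectionwise smash of pointed presheaves of spaces, then the map $\pi^\Nis_n(\scr X)\to\pi^\Nis_{n+1}(S^1\wedge\scr X)$ is the ordinary suspension map, whose Freudenthal range for a merely connected source stops already at $n\le 0$; strict $\A^1$-invariance tells you that $L_\mot$ does not damage these sheaves further, but says nothing about the suspension map itself being stable. (The free-group/James-splitting obstruction you raise at the start of the paragraph is exactly right, and framed transfers on $\scr X$ do \emph{not} make $S^1\wedge U\scr X$ --- the smash formed after forgetting --- a framed presheaf, so the obstruction is not resolved the way you claim.) In the $S^1$-stabilization route, full faithfulness of $\Sigma^\infty_{S^1}\colon\Spc^\fr(k)^\gc\to\Spc^\fr(k)[(S^1)^{-1}]$ is logically the same statement as ``the unit $\id\to\Omega_{S^1}\Sigma_{S^1}$ is an equivalence,'' which is the proposition; so invoking a ``framed stable-connectivity statement'' here is circular. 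The essential-surjectivity claim is also suspect for the same reason: closure of the image of $\Sigma^\infty_{S^1}$ under extensions would itself require cancellation.

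What is missing, and what the paper actually uses, is semiadditivity of $\PSh_\Sigma(\Cor^\fr(k))$. Because $\Cor^\fr(k)$ is semiadditive, there are finite-coproduct-preserving functors $c_X\colon\Span(\Fin)\to\Cor^\fr(k)$ whose sifted extensions have right adjoints valued in $\CMon(\Spc)$, and these right adjoints preserve \emph{all} colimits. Consequently the suspension $\Sigma_{S^1}\scr Y$ is computed \emph{sectionwise in} $\CMon(\Spc)$ --- not in $\Spc_*$ --- and on group-complete sections, which are connective spectra via $\CMon(\Spc)^\gc\wequi\SH_{\ge 0}$, the unit $\id\to\Omega\Sigma$ is already an equivalence with no Freudenthal input whatsoever (and no abelianization issue, since the $\CMon$-suspension of anything is connected, hence grouplike). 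This gives the cancellation at the presheaf level. You correctly guessed that strict homotopy invariance enters, but at the wrong step: in the paper it is Corollary~\ref{cor:strictly-invariant}, used only to verify that $L_\Nis\Sigma_{S^1}\scr Y$ is already motivically local (since $\Sigma_{S^1}\scr Y$ is homotopy invariant and framed), so that the sectionwise equivalence in $\PSh_\Sigma(\Cor^\fr(k))^\gc$ descends through $L_\mot$, not to feed a Freudenthal argument.
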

\begin{proof}
Let $\scr Y \in \PSh_\Sigma(\Cor^\fr(S))^\gc$.
We shall first determine $\Sigma_{S^1} \scr Y$.
Let $X \in \Sm_S$.
There is a finite coproduct preserving functor $c_X: \Span(\Fin) \to \Cor^\fr(S)$ sending $*$ to $X$.
Its sifted-cocontinuous extension admits a right adjoint $c_{X*}: \PSh_\Sigma(\Cor^\fr(S)) \to \PSh_\Sigma(\Span(\Fin)) \wequi \CMon(\Spc)$ \cite[Proposition C.1]{bachmann-norms} which preserves limits and sifted colimits, and hence all colimits by semiadditivity and \cite[Lemma 2.8]{bachmann-norms}.
We deduce that \begin{equation}\label{eq:S1-suspension} (\Sigma_{S^1} \scr Y)(X) \wequi \Sigma_{S^1}(\scr Y(X)) \in \CMon(\Spc). \end{equation}
This implies both that $\Sigma_{S^1} \scr Y$ is group-complete and, using that $\CMon(\Spc)^\gc \wequi \SH_{\ge 0}$ \cite[Remark 5.2.6.26]{HA}, that \[ \scr Y \to \Omega_{S^1} \Sigma_{S^1} \scr Y \in \PSh_\Sigma(\Cor^\fr(S))^\gc \] is an equivalence.
To promote this to the same statement for $\scr X \in \Spc^\fr(S)^\gc$, it is enough to show that whenever $\scr Y$ is motivically local, the same holds for $L_\Nis \Sigma_{S^1} \scr Y$; indeed $\Omega_{S^1}$ is computed sectionwise and hence preserves Nisnevich equivalences.
Equation \eqref{eq:S1-suspension} shows that $\Sigma_{S^1} \scr Y$ is $\A^1$-invariant; the result thus follows from Corollary \ref{cor:strictly-invariant} in \S\ref{sec:strict-A1}.
\end{proof}

\subsection{Abstract cancellation}
The following is extracted from \cite[\S4]{voevodsky2002cancellation}.

\begin{theorem} \label{thm:abstract-cancellation}
Let $\scr C$ be a symmetric monoidal $1$-category and $G \in \scr C$ a symmetric object.
Suppose that the functor $\Sigma_G := G \otimes \ph$ admits a right adjoint $\Omega_G$.
Note that $\Omega_G$ is canonically a lax $\scr C$-module functor.
Suppose that the unit transformation \[ u: \id_{\scr C} \to \Omega_G \Sigma_G \] admits a retraction $\rho$ in the category of lax $\scr C$-module functors.
Then $u, \rho$ are inverse isomorphisms.
\end{theorem}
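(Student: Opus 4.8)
The plan is to exploit the lax $\scr C$-module structure on $\Omega_G$ together with the symmetry of $G$, following Voevodsky's original cancellation argument. The key point is that, because $\Omega_G$ is a lax $\scr C$-module functor, for each $A \in \scr C$ there is a structure map $A \otimes \Omega_G(\ph) \to \Omega_G(A \otimes \ph)$, and specializing to $A = G$ gives two a priori different natural transformations $\Sigma_G \Omega_G \Sigma_G \to \Omega_G \Sigma_G \Sigma_G$: one from the module structure applied with $A = G$, and one obtained by using that $\Sigma_G \Sigma_G = \Sigma_{G \otimes G}$ and applying the module structure of $\Omega_{G\otimes G}$ (or, what amounts to the same after identifications, iterating). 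The symmetry hypothesis on $G$ — that the cyclic permutation on $G^{\otimes n}$ is the identity for some $n \ge 2$ — is exactly what is needed to identify these, as already flagged in Example~\ref{ex:cyclic-prolongation-problem} and the subsequent remarks.

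First I would set up the formalism carefully: since $\Sigma_G$ has a right adjoint $\Omega_G$, the functor $\Omega_G$ acquires a canonical lax $\scr C$-module structure (the oplax structure maps of $\Sigma_G$, which are in fact isomorphisms since $\Sigma_G = G \otimes \ph$ is strong monoidal as a module functor, dualize to lax structure maps on $\Omega_G$). Next I would write down, for the retraction $\rho: \Omega_G \Sigma_G \to \id$, the compatibility it satisfies as a morphism of lax $\scr C$-module functors: for every $A$, the square relating $A \otimes \Omega_G \Sigma_G$, $\Omega_G \Sigma_G (A \otimes \ph) $ (via $\Omega_G(A \otimes \Sigma_G \ph) = \Omega_G \Sigma_G(A \otimes \ph)$ using $\Sigma_G(A \otimes \ph) \cong A \otimes \Sigma_G \ph$), $A \otimes \ph$, and $A \otimes \ph$ commutes. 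The strategy is then the one in the displayed computation of the final Example in the ``Prolongation'' subsection: compose $u$ and $\rho$ in the other order and use the module-compatibility of the retraction (with $A = G$) plus naturality to rewrite $u \circ \rho: \Omega_G \Sigma_G \to \Omega_G \Sigma_G$ as $\Omega_G(\rho) \circ u_{\Sigma_G}$ or similar, which equals the identity; this is where symmetry of $G$ enters to guarantee the relevant diagram of permutations of $G$-factors commutes so that $u_{\Sigma_G} = \Sigma_G(u)$ up to the identification. Having $\rho \circ u = \id$ by hypothesis and $u \circ \rho = \id$ by this argument, we conclude $u$ and $\rho$ are inverse isomorphisms.

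The main obstacle I expect is bookkeeping the coherence isomorphisms: one must track the associativity/symmetry constraints of $\scr C$ and the interchange of the two ways of regarding $\Sigma_G \Sigma_G$ precisely enough to see that the symmetry hypothesis is both necessary and sufficient to make $u \circ \rho = \id$. In a $1$-category this is a finite diagram chase rather than a homotopy-coherence problem, which is why the hypothesis is stated for a symmetric monoidal $1$-category; still, identifying the correct cyclic permutation (of order dividing $n$, for the $n$ witnessing symmetry) and checking it acts trivially on the relevant $G^{\otimes k}$ requires care. A secondary, more routine point is verifying that the canonical lax module structure on $\Omega_G$ is the one implicitly used when one says ``retraction in the category of lax $\scr C$-module functors'', and that $u$ itself is a map of lax module functors so that the statement makes sense; this follows formally from $\Sigma_G$ being a strong $\scr C$-module functor, but should be spelled out.
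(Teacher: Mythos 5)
Your outline correctly identifies the ingredients (the lax module structure on $\Omega_G$, the module-compatibility of $\rho$, naturality, and the symmetry hypothesis on $G$) and correctly points to the final Example in the Prolongation subsection as the motivating computation. But the argument as sketched has a genuine gap at its center, and as written it only proves the theorem under the stronger hypothesis that $G$ is \emph{$2$-symmetric}.

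Here is the issue. The Example you invoke works because there $\rho$ is assumed to be a retraction of \emph{prolongable} functors, where $F_1 = \Omega_G\Sigma_G$ carries the prolongation $\Omega_G u \Sigma_G : F_1 \to \Omega_G F_1 \Sigma_G$. The hypothesis of the theorem, however, gives a retraction in the category of \emph{lax $\scr C$-module} functors, which is the prolongation $\tilde F_1$ coming from the lax module structure maps $G \otimes \Omega_G(\ph) \to \Omega_G(G\otimes\ph)$. As Example~\ref{ex:cyclic-prolongation-problem} and the Remark after it explain, $F_1$ and $\tilde F_1$ are isomorphic as prolongable functors precisely when the switch on $G^{\otimes 2}$ is the identity. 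So the ``rewrite $u\rho$ as $\Omega_G\rho\Sigma_G \circ \Omega_G u \Sigma_G = \Omega_G(\rho u)\Sigma_G = \id$'' computation you are aiming for is only legitimate when $G$ is $2$-symmetric, whereas the hypothesis is merely that the cyclic permutation on $G^{\otimes n}$ is trivial for \emph{some} $n\ge 2$. You flag in your last paragraph that identifying the right permutation and checking it is trivial ``requires care,'' but you never supply the mechanism, and for $n>2$ the one-step argument does not close.

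The paper's proof supplies exactly that mechanism, and it is the part your proposal is missing. For an automorphism $\alpha$ of $G^{\otimes n}$ it forms the composite
\[
p(\alpha):\ \iHom(G, G\otimes X) \xrightarrow{\id_{G^{\otimes n-1}}\otimes\ph} \iHom(G^{\otimes n}, G^{\otimes n}\otimes X) \xrightarrow{c_\alpha} \iHom(G^{\otimes n}, G^{\otimes n}\otimes X) \xrightarrow{\rho^{n-1}} \iHom(G, G\otimes X),
\]
where $c_\alpha$ is conjugation by $\alpha$. Because the first arrow is an $(n-1)$-fold composite of units and $\rho$ retracts $u$, one gets $p(\id)=\id$. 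On the other hand, taking $\alpha=\sigma$ the cyclic permutation, $n-2$ of the $\rho$'s again cancel units, and the remaining composite is identified, via Lemma~\ref{lemm:cancellation-technical} (the precise version of the ``module-compatibility square'' you gesture at), with $u_X\rho_X$. Thus $u_X\rho_X = p(\sigma)$ and $\id = p(\id)$, and the $n$-symmetry hypothesis $\sigma=\id$ finishes the proof. Without this iteration-and-conjugation device, your sketch does not reach the general case; I'd encourage you to first prove the precise commutative square (Lemma~\ref{lemm:cancellation-technical}) and then build $p(\alpha)$ explicitly, rather than hoping naturality alone untangles the permutations.
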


\begin{remark} \label{rmk:infinity-cancellation}
If $\scr C$ is an $\infty$-category and $\rho$ is a lax $\scr C$-module retraction of $u: \id_{\scr C} \to \Omega_G \Sigma_G$, then the same conclusion holds (apply the theorem to $h\scr C$).
\end{remark}

\begin{remark}
Since $\scr C$ is a $1$-category, a lax $\scr C$-module structure on an endofunctor $F: \scr C \to \scr C$ just consists of compatible morphisms $X \otimes F(Y) \to F(X \otimes Y)$ for all $X, Y \in \scr C$.
Moreover a transformation $\alpha: F \to G$ being a lax $\scr C$-module transformation is a property: it is the requirement that for $X, Y \in \scr C$, the following square commutes
\begin{equation*}
\begin{CD}
X \otimes F(Y) @>{\id_X \otimes \alpha_Y}>> X \otimes G(Y) \\
@VVV                                           @VVV        \\
F(X \otimes Y) @>{\alpha_{X \otimes Y}}>>   G(X \otimes Y).
\end{CD}
\end{equation*}
\end{remark}

\begin{example} \label{ex:id-endo}
A lax $\scr C$-module transformation $\alpha: \id \to \id$ (of $\id_{\scr C}$ with its canonical $\scr C$-module structure) is completely determined by $\alpha_\1: \1 \to \1$.
In particular $\rho$ being a retraction of $u$ is equivalent to the composite \[ \1 \xrightarrow{u_\1} \Omega_G G \xrightarrow{\rho_\1} \1 \] being the identity.
\end{example}

To simplify notation, from now on we will write $\iHom(G, \ph)$ for $\Omega_G$, and also use suggestive notation like $\otimes \id_Y: \iHom(A, B) \to \iHom(A \otimes Y, B \otimes Y)$, when convenient.
\begin{lemma} \label{lemm:cancellation-technical}
For $X, Y \in \scr C$, the following diagram commutes
\begin{equation*}
\begin{CD}
\iHom(G, G \otimes X) @>{\rho_X}>> \iHom(\1, X) \\
@V{\otimes \id_Y}VV               @V{\otimes \id_Y}VV \\
\iHom(G \otimes Y, G \otimes X \otimes Y) @>{\Omega_Y\rho_{X \otimes Y}}>> \iHom(Y, X \otimes Y).
\end{CD}
\end{equation*}
\end{lemma}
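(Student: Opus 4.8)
The plan is to recognize the square as an instance of the fact that $\rho$ is a lax $\scr C$-module transformation, after suitably unwinding the definitions of the vertical maps. Recall that $\rho\colon \Omega_G\Sigma_G \to \id_{\scr C}$ is a lax $\scr C$-module transformation between the lax module functors $\Omega_G\Sigma_G = \iHom(G, G\otimes{\ph})$ and $\id_{\scr C}$. By the explicit description of lax $\scr C$-module transformations in the remark following Theorem \ref{thm:abstract-cancellation}, this means that for all $A, B \in \scr C$ the square
\begin{equation*}
\begin{CD}
A \otimes \iHom(G, G\otimes B) @>{\id_A \otimes \rho_B}>> A \otimes B \\
@VVV @VVV \\
\iHom(G, G\otimes A\otimes B) @>{\rho_{A\otimes B}}>> A\otimes B
\end{CD}
\end{equation*}
commutes, where the left vertical is the lax-module structure map of $\Omega_G\Sigma_G$ and the right vertical is that of $\id_{\scr C}$ (namely the identity, or the associativity isomorphism). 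So the first thing I would do is write out precisely what these two lax $\scr C$-module structures are: for $\id_{\scr C}$ it is trivial, and for $\iHom(G, G\otimes{\ph})$ it is the map induced by the $\scr C$-module structure of the internal hom, i.e. ``$a \otimes f \mapsto c_a \otimes f$'' in the notation of Example \ref{ex:cyclic-prolongation-problem}, precomposed with nothing — the standard map $A \otimes \iHom(G, G\otimes B) \to \iHom(G, A\otimes G\otimes B) \wequi \iHom(G, G\otimes A \otimes B)$.

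Next I would match the vertical maps in the statement of the lemma with the vertical maps in the module-transformation square above. The maps labeled $\otimes\,\id_Y$ in the lemma are the composites
\[
\iHom(A, B) \xrightarrow{\ \otimes\,\id_Y\ } \iHom(A\otimes Y, B\otimes Y),
\]
which, by adjunction, are adjoint to $\iHom(A,B)\otimes A\otimes Y \xrightarrow{\mathrm{ev}\otimes\id_Y} B\otimes Y$; equivalently they factor through the lax module structure map $Y \otimes \iHom(A,B) \to \iHom(A, B\otimes Y)$ followed by the obvious rearrangement. The key observation is then that $\Omega_Y\rho_{X\otimes Y}$ composed with $\otimes\,\id_Y$ on the source is exactly the bottom-then-right composite of the module-transformation square (instantiated at appropriate objects and with $Y$ in the ``$A$'' slot after a symmetry), while $\rho_X$ followed by $\otimes\,\id_Y$ is the left-then-top composite. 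A symmetry isomorphism of $\scr C$ (using that the tensor is symmetric monoidal, and harmlessly permuting the tensor factor $Y$) is what converts the module-transformation square into the square of the lemma.

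I expect the main obstacle to be purely bookkeeping: carefully tracking how the lax $\scr C$-module structure on $\iHom(G, G\otimes{\ph})$ interacts with the ``$\otimes\,\id_Y$'' maps, and making sure the instances of the symmetry isomorphism $X\otimes Y \wequi Y\otimes X$ and the associativity constraints are inserted consistently on both sides so that the two composites literally agree rather than merely agreeing up to an automorphism. Since $\scr C$ is a $1$-category (Remark \ref{rmk:infinity-cancellation} reduces the $\infty$-categorical case to this), all coherence data are honest equations, so once the diagram is drawn with every structure map made explicit, commutativity follows by pasting the module-transformation square of $\rho$ with the (tautologically commuting) naturality squares for the symmetry and for $\mathrm{ev}$. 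I would present this as a single large commutative diagram, subdivided into the module-transformation square for $\rho$ and two or three naturality/coherence cells, and then simply observe that the outer boundary is the square asserted by the lemma.
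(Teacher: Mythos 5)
Your proposal is correct and takes essentially the same approach as the paper: the paper also identifies $\otimes\,\id_Y$ with a composite involving the unit of the $\iHom(Y,Y\otimes\ph)$ adjunction, the lax module structure map, and the adjunction isomorphism $\iHom(Y,\iHom(G,\ph))\wequi\iHom(G\otimes Y,\ph)$, and then observes that the middle cell is precisely the lax-module-transformation square for $\rho$ (with the other two cells commuting by naturality). The only difference is one of explicitness: the paper writes out the three-cell vertical decomposition in full, while you describe it as a sketch to be filled in by bookkeeping.
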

\begin{proof}
Decompose the diagram as
\begin{equation*}
\begin{CD}
\iHom(G, G \otimes X) @>{\rho_X}>> \iHom(\1, X) \\
@VuVV                              @VuVV        \\
\iHom(Y, Y \otimes \iHom(G, G \otimes X)) @>{\iHom(Y, Y \otimes \rho_X)}>> \iHom(Y, Y \otimes \iHom(\1, X)) \\
@VVV                                                                       @VVV \\
\iHom(Y, \iHom(G, G \otimes X \otimes Y)) @>{\iHom(Y, \rho_{X \otimes Y})}>> \iHom(Y, \iHom(\1, X \otimes Y)) \\
@V{\wequi}VV                                                               @V{\wequi}VV \\
\iHom(G \otimes Y, G \otimes X \otimes Y) @>{\Omega_Y\rho_{X \otimes Y}}>> \iHom(Y, X \otimes Y).
\end{CD}
\end{equation*}
Here the middle vertical transformations are the lax module structure maps, and the bottom vertical isomorphisms hold in any symmetric monoidal category.
The upper and lower squares commute by naturality, and the middle one by assumption of $\rho$ being a lax module transformation.
The vertical composites are given by $\otimes \id_Y$.
This concludes the proof.
\end{proof}

\begin{proof}[Proof of Theorem \ref{thm:abstract-cancellation}.]
Let $X \in \scr C$.
It suffices to show that the composite $\Omega_G \Sigma_G X \xrightarrow{\rho_X} X \xrightarrow{u_X} \Omega_G \Sigma_G X$ is the identity.
Let $n \ge 2$ and $\alpha: G^{\otimes n} \to G^{\otimes n}$ be an automorphism.
Consider the composite \[ p(\alpha): \iHom(G, G \otimes X) \xrightarrow{\id_{G^{\otimes n-1}} \otimes} \iHom(G^{\otimes n}, G^{\otimes n} \otimes X) \xrightarrow{c_\alpha} \iHom(G^{\otimes n}, G^{\otimes n} \otimes X) \xrightarrow{\rho^{n-1}} \iHom(G, G \otimes X), \] where $c_\alpha$ denotes the conjugation by $\alpha$.

Note that the map ``$\id_{G^{\otimes n-1}} \otimes$'' is a composite of units $u$ and hence by assumption of $\rho$ being a retraction, we get $p(\id) = \id$.

On the other hand let $\alpha = \sigma$ be the cyclic permutation.
Then the first $n-2$ applications of $\rho$ are again ``cancelling out identities'', so that $p(\sigma)$ is the same as the composite \[ \iHom(G, G \otimes X) \xrightarrow{f} \iHom(G^{\otimes 2}, G^{\otimes 2} \otimes X) \xrightarrow{f_2} \iHom(G, G \otimes X), \] where $f_1$ ``inserts $\id_G$ in the middle'', and ``$f_2$ applies $\rho$ at the front''. \todo{I still wish this could be explained better.}
Lemma \ref{lemm:cancellation-technical} implies that this is the same as $u_X \rho_X$.

Hence if $G$ is $n$-symmetric, then since $\sigma = \id$ we find that \[ u_X \rho_X = p(\sigma) = p(\id) = \id. \]
This concludes the proof.
\end{proof}

\subsection{Twisted framed correspondences}
Using \cite[\S B]{EHKSY3} it is possible to construct a symmetric monoidal $\infty$-category $\Cor^\fr_L(S)$ with the following properties:
\begin{itemize}
\item Its objects are pairs $(X, \xi)$ with $X \in \Sm_S$ and $\xi \in K(X)$.
\item The morphisms from $(X, \xi)$ to $(Y, \zeta)$ are given by spans \[ X \xleftarrow{f} Z \xrightarrow{g} Y, \] where $Z$ is a \emph{derived} scheme and $f$ is a quasi-smooth morphism, together with a trivialization \[ f^*(\xi) + L_f \wequi g^*(\eta) \in K(Z). \]
\item There is a symmetric monoidal functor $\delta: \Cor^\fr(S) \to \Cor^\fr_L(S)$ which sends $X$ to $(X, 0)$ and induces the evident maps on mapping spaces.
\end{itemize}
It follows that the tensor product in $\Cor^\fr_L(S)$ is given by the product of schemes, and the functor $\delta$ is faithful (induces monomorphisms on mapping spaces).

The following will be helpful.
\begin{lemma} \label{lemm:recognize-finite}
A span \[ X \xleftarrow{f} Z \to Y \in \Map_{\Cor^\fr_L(S)}((X, 0), (Y, 0)) \] is in the image of $\delta$ if and only if $f$ is finite.
\end{lemma}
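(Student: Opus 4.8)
We wish to characterize which spans in $\Cor^\fr_L(S)$ between objects of the form $(X,0)$ and $(Y,0)$ lie in the image of the faithful functor $\delta$. Recall that a morphism in $\Cor^\fr(S)$ is (roughly) a span $X \xleftarrow{f} Z \xrightarrow{g} Y$ with $Z \to X$ \emph{finite}, equipped with a framing, i.e. a trivialization of the stable normal bundle; whereas in $\Cor^\fr_L(S)$ the morphism $(X,0) \to (Y,0)$ only requires $Z$ to be a derived scheme with $f$ \emph{quasi-smooth} and carries a trivialization $f^*(0) + L_f \wequi g^*(0)$, i.e. a trivialization $L_f \wequi 0 \in K(Z)$. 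The ``only if'' direction is immediate from the construction: $\delta$ sends a framed correspondence (which has $f$ finite) to the corresponding span in $\Cor^\fr_L(S)$, so any span in the image of $\delta$ has $f$ finite. So the content is the ``if'' direction.

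**Key steps for the "if" direction.** First I would recall that a quasi-smooth morphism $f\colon Z \to X$ that is also finite (hence proper and affine, with $Z$ a derived scheme) must in fact be \emph{finite syntomic}: quasi-smooth plus finite type plus proper and quasi-finite forces $f$ to be flat (the derived cotangent complex $L_f$ is perfect of Tor-amplitude $[-1,0]$, and finiteness pins down relative dimension $0$, so $L_f$ has Tor-amplitude $0$, i.e. $f$ is a local complete intersection of relative dimension $0$, hence flat by miracle flatness in the derived setting), and in particular $Z$ is a classical scheme if $X$ is. Thus the span underlying our morphism is genuinely a span of the shape allowed in $\Cor^\fr(S)$. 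Second, I would observe that the \emph{data} of a framing is the same on both sides: for $f$ finite syntomic the relative cotangent complex $L_f$ agrees with the relevant $L_f$ appearing in the definition of $\Cor^\fr_L(S)$, and the trivialization $f^*(0) + L_f \wequi g^*(0) \in K(Z)$ is precisely a trivialization $L_f \wequi 0$, which is exactly the framing datum of a tangentially framed correspondence (using that $g^*(0) = 0$). Hence the morphism $(X,0)\to(Y,0)$ with $f$ finite is, together with all its higher coherence data, the image under $\delta$ of a well-defined framed correspondence $X \to Y$ in $\Cor^\fr(S)$. Finally, since $\delta$ is faithful, this identification is unambiguous and the morphism is in the image of $\delta$.

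**Main obstacle.** The delicate point is the passage from ``quasi-smooth and finite'' to ``finite syntomic with $Z$ classical'', carried out at the level of \emph{spaces} of such correspondences rather than just on objects: one must check that the inclusion of the sub-$\infty$-groupoid of spans with $f$ finite into all spans with $f$ quasi-smooth is, componentwise on the relevant mapping spaces, exactly the image of $\delta$ — i.e. that no extra higher homotopical data appears. Concretely, this amounts to identifying $\Map_{\Cor^\fr(S)}(X,Y)$ with the subspace of $\Map_{\Cor^\fr_L(S)}((X,0),(Y,0))$ cut out by the (closed and open, on the relevant moduli) condition that $f$ be finite, and this is exactly the kind of statement one extracts from the constructions of \cite{EHKSY1} and \cite[\S B]{EHKSY3}. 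So I expect the proof to be short: cite the moduli-theoretic description of both mapping spaces, note that finiteness is equivalent to the condition in the statement, and invoke faithfulness of $\delta$.
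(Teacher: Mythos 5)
Your overall strategy matches the paper's: both proofs identify the crux of the ``if'' direction as showing that a finite quasi-smooth $Z$ over a classical base must itself be a classical scheme, after which the identification of framing data is formal. The paper dispatches this in a single line by citing \cite[Lemma 2.2.1]{EHKSY3}; you attempt to unpack the argument, but your unpacking contains an error. You claim that finiteness forces $L_f$ to have Tor-amplitude $0$. A perfect complex concentrated in Tor-amplitude $[0,0]$ is a vector bundle, so $L_f$ having Tor-amplitude $0$ would mean $f$ is \emph{smooth}, and smooth plus finite means finite \'etale --- far too strong a conclusion, and also not what is needed. Finite syntomic morphisms are generally not \'etale, and their cotangent complexes do have nontrivial cohomology in degree $-1$ even when $Z$ is classical; so ``$Z$ classical'' and ``$L_f$ of Tor-amplitude $0$'' are different conditions and it is only the former that is at stake.

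The correct argument is a regular-sequence (Cohen--Macaulay) argument, not a Tor-amplitude argument about $L_f$: locally, $Z$ is the derived zero locus of $m$ functions on a smooth $X$-scheme of relative dimension $m$ (this is what quasi-smooth of virtual relative dimension $0$ means), finiteness of $Z \to X$ forces the classical zero locus to have codimension $m$ in the ambient scheme, and Cohen--Macaulayness of that ambient scheme then guarantees the $m$ functions form a regular sequence, so the Koszul complex has no higher homology and $Z$ is discrete. This is what \cite[Lemma 2.2.1]{EHKSY3} provides. ``Miracle flatness'' can be invoked to see that the resulting classical $Z$ is flat over $X$, but it does not by itself tell you $Z$ is classical. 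Your remaining observations --- that once $Z$ is classical the trivialization $L_f \wequi 0$ is exactly the framing datum of a tangentially framed correspondence, and that faithfulness of $\delta$ settles the higher-coherence worry --- are fine and match the intent of the paper's one-line proof.
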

\begin{proof}
The only concern is that $Z$ might be a derived scheme instead of a classical one; by \cite[Lemma 2.2.1]{EHKSY3} this cannot happen.
\end{proof}

We mainly introduce the category $\Cor_L^\fr(S)$ for the following technically convenient reason: all of its objects are strongly dualizable.
\begin{proposition} \label{prop:duals}
Let $X \in \Sm_S$.
The spans \[ * \leftarrow X \xrightarrow{\Delta} X \times X \] and \[ X \times X \xleftarrow{\Delta} X \to * \] admit evident framings, and exhibit $(X, L_X)$ as the dual of $(X,0)$ in $\Cor^\fr_L(S)$.
\end{proposition}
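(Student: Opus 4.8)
The plan is to exhibit explicit coevaluation and evaluation spans and to check the two triangle identities; this is a form of Atiyah duality, and all the computations are variants of the diagonal trick. First I would recall that in $\Cor^\fr_L(S)$ the tensor product is the product of $S$-schemes, the unit is $\1 = (S,0) = *$, and that to say $(X, L_X)$ is dual to $(X,0)$ is to give morphisms $\mathrm{coev}\colon \1 \to (X,0)\otimes(X,L_X)$ and $\mathrm{ev}\colon (X,L_X)\otimes(X,0)\to\1$ so that the composites
\[ (X,0) \xrightarrow{\mathrm{coev}\otimes\id} (X,0)\otimes(X,L_X)\otimes(X,0) \xrightarrow{\id\otimes\mathrm{ev}} (X,0) \]
and
\[ (X,L_X) \xrightarrow{\id\otimes\mathrm{coev}} (X,L_X)\otimes(X,0)\otimes(X,L_X) \xrightarrow{\mathrm{ev}\otimes\id} (X,L_X) \]
are the respective identities. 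For $\mathrm{coev}$ I take the span $S \xleftarrow{p} X \xrightarrow{\Delta} X\times X$, where $p$ is the structure map, which is smooth, hence quasi-smooth, with $L_p \wequi L_X$; the target $(X,0)\otimes(X,L_X)$ is $(X\times X,\mathrm{pr}_2^*L_X)$, and since $\mathrm{pr}_2\circ\Delta = \id_X$ the required trivialization $p^*(0)+L_p \wequi \Delta^*(\mathrm{pr}_2^*L_X)$ is the tautological identification $L_X\wequi L_X$: this is the ``evident framing''. For $\mathrm{ev}$ I take $X\times X \xleftarrow{\Delta} X \xrightarrow{p} S$: since $X/S$ is smooth, $\Delta$ is a regular closed immersion, hence quasi-smooth, and its cotangent complex is the shifted conormal bundle, $L_\Delta \wequi \Omega_{X/S}[1] = L_X[1]$, so $[L_\Delta] = -[L_X]$ in $K(X)$; the source is $(X\times X,\mathrm{pr}_1^*L_X)$, and the required trivialization of $\Delta^*(\mathrm{pr}_1^*L_X)+L_\Delta \wequi p^*(0) = 0$ is the canonical one afforded by the cofibre sequence $L_X\xrightarrow{\id}L_X\to L_X[1]$.

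Next I would verify the triangle identities by span calculus: the composite of $A\xleftarrow{f}B\to C$ and $C\xleftarrow{f'}B'\to D$ is $A\leftarrow B\times_C B'\to D$, the fibre product taken in derived schemes (and the composite backward map is quasi-smooth because all the backward maps occurring here are). In both identities one is led to a derived fibre product of the form $X^2\times_{X^3}X^2$, in which the two maps $X^2\to X^3$ are coordinate diagonals of the shape $(u,v)\mapsto(u,u,v)$ and $(u,v)\mapsto(u,v,v)$ (which two slots are repeated differs between the two triangles, but the shape is the same). These are regular closed immersions of $X^3$ which meet transversally along $\{(x,x,x)\}\wequi X$ — their conormal bundles there are independent sub-bundles of $\Omega_{X^3/S}$ — so the derived fibre product coincides with the classical one, which is $X$ embedded diagonally. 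Chasing the legs, one finds the composite span is in each case the identity span $\id_{(X,0)}$, respectively $\id_{(X,L_X)}$; and transversality guarantees there is no excess term, so the composite framing agrees with the canonical framing of the identity. This establishes the two identities, hence the proposition.

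The main obstacle, as usual, is the bookkeeping of the framings rather than the underlying geometry. In $\Cor^\fr_L(S)$ a framing is not a $K$-theoretic equation but homotopy-coherent data (essentially a path in $K$-theory), so the substantive point is to check that the canonical nullhomotopies defining $\mathrm{coev}$ and $\mathrm{ev}$ — coming from $\mathrm{pr}_i\circ\Delta = \id$ and from the conormal description of $L_\Delta$ — compose, via the base-change formula for cotangent complexes along the transverse square above, to the canonical nullhomotopy witnessing the identity. I expect this to be manageable because every class in sight is either literally $0$ or canonically trivialized and the relevant square is Tor-independent, but making it precise requires care with the coherence data supplied by the construction of \cite{EHKSY3}. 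One should also note in passing that the objects occurring in $\mathrm{coev}$ and $\mathrm{ev}$ carry nontrivial $K$-theory twists, hence are not in the essential image of $\delta$; this is precisely why these duals are invisible in $\Cor^\fr(S)$ and one must work in $\Cor^\fr_L(S)$.
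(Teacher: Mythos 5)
Your proof follows essentially the same route as the paper's: both construct the coevaluation/evaluation spans with the same framings (the tautological path for $\mathrm{coev}$, and the identification $[L_\Delta] = -[L_X]$ — which the paper derives from the cofiber sequence for $p_1\circ\Delta = \id$ and you derive equivalently from the conormal bundle — for $\mathrm{ev}$), and both reduce the triangle identities to the formal fact that in a span category the composites collapse to $X$ along the small diagonal. The paper appeals to "this kind of duality happens in all span categories" and asserts the framing coherence is easy, while you unwind the $X^2\times_{X^3}X^2$ fiber product and the transversality explicitly and are correspondingly more candid that the homotopy-coherent bookkeeping of the framings is the real content — but the underlying argument is the same.
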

\begin{proof}
This kind of duality happens in all span categories; we just need to verify that the spans are frameable and that the induced framings of the compositions are trivial.
All of this is easy to verify.
For example $X \times X$ really means $(X, 0) \otimes (X, L_X) = (X \times X, p_2^* L_X)$ and hence to frame the first span we need to exhibit a path \[ 0 + L_X \wequi \Delta^* p_2^* L_X, \] but this holds on the nose since $\Delta^* p_2^* \wequi \id$; to frame the second span we need to exhibit a path \[ \Delta^* p_2^* L_X + L_\Delta \wequi 0 \] which is possible in $K$-theory since the composite $X \xrightarrow{\Delta} X \times X \xrightarrow{p_1} X$ is the identity, so $0 = L_{\id} \wequi L_\Delta + \Delta^* L_{p_1}$ and finally $L_{p_1} \wequi p_2^* L_X$ by base change.
\end{proof}

The following will be helpful later to exhibit spans.
\begin{construction} \label{construction:spans}
Suppose given $X, G \in \Sm_S$, a map $f: X \times G \to \A^1$ and a path $L_G \wequi 1 \in K(G)$.
Then there is a span \[ D(f): (X \xleftarrow{p_{1}} Z(f) \xrightarrow{p_2} G) \in \Map_{\Cor^\fr_L(S)}((X, 0), (G, 0)); \] the framing is given by \[ L_{p_1} \wequi L_{Z(f)/X \times G} + L_{X \times G/X} \wequi -1 + L_G \wequi 0 \in K(Z(f)), \] where we have used that $L_{Z(f)/X \times G} \wequi -1$ via $f$ and $L_G \wequi 1$ by assumption.
\end{construction}
We will always apply this construction with $G = \A^1 \setminus 0$, so that there is a canonical trivialization of $L_G$.

\subsection{A general construction}
Given $X, Y \in \Sm_S$, for notational convenience we will write $f: X \rightsquigarrow Y$ for $f \in \Map_{\Cor^\fr_L(S)}((X, 0), (Y, 0))$.
\begin{construction}
Let $A, G \in \Sm_S$ and $\alpha: A \times G \rightsquigarrow G$.
We obtain a $\Cor^\fr_L(S)$-module transformation \[ \rho_\alpha: \Omega_G \Sigma_G \to \Omega_A \in \End(\PSh_\Sigma(\Cor^\fr_L(S))) \] as follows: via strong dualizability (Proposition \ref{prop:duals}), we can rewrite the source and target and consider the transformation \[ G^\vee \otimes G \otimes \ph \xrightarrow{\alpha^\vee \otimes \id_{\ph}} A^\vee \otimes \ph \] where $\alpha^\vee: G^\vee \otimes G \to A^\vee$ is obtained from $\alpha$ in the evident manner.
\end{construction}
We will eventually apply this with $G = \A^1 \setminus 0$ and $A = \A^1$ or $A = *$.

\begin{remark} \label{rmk:explicit-construction}
Let $X, Y \in \Sm_S$.
Given a span \[ G \times Y \leftarrow Z \to G \times X, \] the transformation $\rho_\alpha$ produces a span \[ A \times Y \leftarrow \rho_\alpha(Z) \to X. \]
Write $\alpha$ as \[ A \times G \leftarrow C \to G. \]
Tracing through the definitions, one finds that \[ \rho_\alpha(Z) = Z \times_{G \times G} C, \] with an evident induced framing.
\end{remark}

\begin{lemma} \label{lemm:cancel-technical}
The transformation $\rho_\alpha$ satisfies the following properties.
\begin{enumerate}
\item Given $Z': X \rightsquigarrow X'$ and $Z: G \times Y \rightsquigarrow G \times X$ we have \[ \rho_\alpha((\id_G \otimes Z') \circ Z) \wequi (\id_A \otimes Z') \circ \rho_\alpha(Z). \]
\item Given $Z': Y \rightsquigarrow Y$ and $Z: G \times Y \rightsquigarrow G \times X$ we have \[ \rho_\alpha(Z \circ (\id_G \otimes Z')) \wequi \rho_\alpha(Z) \circ (\id_G \otimes Z'). \]
\item Given $i: A' \rightsquigarrow A$, we have \[ \rho_{i^* \alpha} \wequi i^* \rho_\alpha. \]
\end{enumerate}
\end{lemma}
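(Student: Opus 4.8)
The plan is to reduce all three identities to routine identifications of iterated (derived) fiber products, together with a comparison of the framings transported along them. The key input is the explicit description of $\rho_\alpha$ in Remark~\ref{rmk:explicit-construction}: presenting $\alpha$ by a span $A \times G \leftarrow C \to G$, one has $\rho_\alpha(Z) = Z \times_{G \times G} C$, with its induced framing, for every span $Z : G \times Y \rightsquigarrow G \times X$ (here, as in that remark, I write $C$, $Z$, etc., also for the underlying schemes of the spans). One preliminary point worth recording is that the abstract transformation $\rho_\alpha = \alpha^\vee \otimes \id$ does restrict on representable presheaves to exactly this formula; this is formal, using the bifunctoriality of $\otimes$ on $\PSh_\Sigma(\Cor^\fr_L(S))$ and the observation that the coevaluation entering the definition of $\alpha^\vee$ is, under the duality of Proposition~\ref{prop:duals}, represented by the diagonal span of $A$. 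Since composition in $\Cor^\fr_L(S)$ is itself a derived fiber product of the underlying schemes carrying a framing assembled from those of its two factors, each of (1)--(3) then becomes the assertion that two iterated derived fiber products, each equipped with a framing assembled by the same recipe, coincide.

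For (1) and (2) I would write out both sides directly. The span $Z'$ induces a morphism of representable presheaves, and (1) (respectively (2)) is nothing but the naturality square for the transformation $\rho_\alpha$ attached to this morphism, applied on the target $X$ (respectively the source $Y$) of the inner span $Z$. Concretely, using the fiber-product formula for composition in $\Cor^\fr_L(S)$ together with Remark~\ref{rmk:explicit-construction}, the left-hand side of (1) unwinds to $\bigl( Z \times_{G \times X} (G \times Z') \bigr) \times_{G \times G} C$, which one rearranges, by associativity and symmetry of fiber products over the relevant copies of $G$ and $X$, into the iterated fiber product describing the right-hand side; identity (2) is the same reassociation carried out on the $Y$-side. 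In each case the framing on the two sides is assembled from the framings of $Z$, $Z'$ and $C$ by the same recipe, so they agree.

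For (3), write $i$ as a span $A' \leftarrow I \to A$. Then $i^*\alpha = \alpha \circ (i \otimes \id_G)$ is presented by the span $A' \times G \leftarrow I \times_A C \to G$, whence $\rho_{i^*\alpha}(Z) = Z \times_{G \times G} (I \times_A C)$; on the other hand $i^*\rho_\alpha(Z)$ is $\rho_\alpha(Z) = Z \times_{G \times G} C$ precomposed with $i \otimes \id_Y$, that is, $(I \times Y) \times_{A \times Y} (Z \times_{G \times G} C)$. Both simplify to the common triple fiber product $Z \times_{G \times G} C \times_A I$ (the redundant coordinates in the second description being determined), carrying the same assembled framing, so $\rho_{i^*\alpha} \wequi i^*\rho_\alpha$.

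The only genuine friction I anticipate lies in the bookkeeping on the framings rather than on the underlying schemes: one must track the $K$-theoretic trivializations of the relevant cotangent complexes (as produced in Construction~\ref{construction:spans} and Proposition~\ref{prop:duals}) through each reassociation of fiber products and confirm that they are carried to one another, and one should record that quasi-smoothness of the legs that must be quasi-smooth is preserved under all the base changes in play, so that the iterated derived fiber products really do define morphisms in $\Cor^\fr_L(S)$. Granting the ``this happens in every span category'' principle already invoked repeatedly above, none of this is difficult.
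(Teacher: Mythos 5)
Your proof is correct and takes essentially the same approach as the paper: the paper's entire proof is the one-liner "Evident from the naturality of the construction," and your argument simply makes that naturality explicit by unwinding the fiber-product description of $\rho_\alpha$ from Remark \ref{rmk:explicit-construction}. Your treatment of (1) and (2) as naturality squares and (3) as functoriality in the span $\alpha$, together with the $Z \times_{G\times X}(G\times Z') \wequi Z\times_X Z'$-type rearrangements, is exactly the content the paper elides.
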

\begin{proof}
Evident from the naturality of the construction.
\end{proof}

Now define \[ M_\alpha(Y, X) \subset \Map_{\Cor^\fr(S)}(G \times Y, G \times X) \] to consist of the disjoint union of those path components corresponding to spans $G \times Y \leftarrow Z \to G \times X$ such that $\rho_\alpha(Z)$ is finite.
Then (1) and (2) of Lemma \ref{lemm:cancel-technical} translate (using Lemma \ref{lemm:recognize-finite}) into
\begin{enumerate}
\item $(\id_G \otimes Z') \circ M_\alpha(Y, X) \subset M_\alpha(Y, X')$, and
\item $M_\alpha(Y, X) \circ (\id_G \otimes Z') \subset M_\alpha(Y', X)$.
\end{enumerate}

\begin{construction} \label{construction:transformation}
Define a subfunctor \[ F_\alpha \Omega_G \Sigma_G \hookrightarrow \Omega_G \Sigma_G \in \End(\PSh_\Sigma(\Cor^\fr(S))) \] via \[ (F_\alpha \Omega_G \Sigma_G X)(Y) = M_\alpha(X, Y). \]
The lax monoidal natural transformation \[ \Omega_G \delta_*\delta^* \Sigma_G \wequi \delta_* \Omega_G \Sigma_G \delta^* \xrightarrow{\rho_\alpha} \delta_* \Omega_A \delta^* \wequi \Omega_A \delta_* \delta^* \] restricts by construction to a natural transformation \[ \rho_\alpha: F_\alpha \Omega_G \Sigma_G \to \Omega_A, \] which we will think of as \[ \rho_\alpha: A \otimes F_\alpha \Omega_G \Sigma_G \to \id. \]
\end{construction}
Take $A = \A^1$, $G = \A^1 \setminus 0$ and suppose that $\rho_\alpha(\id_G)$ is finite.
Then the unit transformation \[ \id \to \Omega_G \Sigma_G \] factors through $F_\alpha \Omega_G \Sigma_G$.
Moreover we obtain two $\A^1$-homotopic transformations \[ \rho_{i_0^* \alpha}, \rho_{i_1^* \alpha}: F_\alpha \Omega_G \Sigma_G \to \id \in \End(\PSh_\Sigma(\Cor^\fr(S))). \]

\subsection{$\Gm$-cancellation}
Let $G = \A^1 \setminus 0$.
\begin{definition}
We define maps $G \times G \to \A^1$ via \[ g_n^+(t_1, t_2) = t_1^n + 1 \text{ and } g_n^-(t_1,t_2) = t_1^n + t_2. \]
We further define maps $\A^1 \times G \times G \to \A^1$ via \[ h_n^\pm(t, t_1, t_2) = t g_n^\pm(t_1, t_2) + (1-t) g_m^\pm(t_1, t_2). \]
\end{definition}

Recall the associated spans from Construction \ref{construction:spans}.
Put \[ F_i = \bigcap_{m,n\ge i} [F_{D(h_{m,n}^+)} \cap F_{D(h_{m,n}^-)}] \subset \Omega_G \Sigma_G. \]

\begin{lemma}
We have \[ \colim_{i} F_i \wequi \Map_{\Cor^\fr(S)}(X, Y). \]
\end{lemma}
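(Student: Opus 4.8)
The content of the lemma is that the increasing family $F_0 \subseteq F_1 \subseteq \cdots$ exhausts all of $\Omega_G \Sigma_G$ (the right-hand side being, for representable $X$ evaluated on $Y$, the mapping space $\Map_{\Cor^\fr(S)}(G \times Y, G \times X)$ that the inclusions $F_i \hookrightarrow \Omega_G\Sigma_G$ land in). Here $F_i \subseteq F_{i+1}$ because the intersection defining $F_{i+1}$ ranges over a subset of the indices defining $F_i$, so $\colim_i F_i = \bigcup_i F_i$; and since presheaf colimits are computed sectionwise and each $F_i$ is, by construction via the sets $M_{D(h_{m,n}^\pm)}$, a union of path components of $\Omega_G \Sigma_G$, this union is again the union of those path components which lie in some $F_i$. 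Thus the first step is to reduce the lemma to the following purely local assertion: \emph{for each $X, Y \in \Sm_k$ and each span $Z \colon G \times Y \rightsquigarrow G \times X$ there is an integer $i_0 = i_0(Z)$ such that $\rho_{D(h_{m,n}^\pm)}(Z)$ is finite for all $m, n \ge i_0$.}

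The next step is to make $\rho_{D(h_{m,n}^\pm)}(Z)$ completely explicit. By Construction~\ref{construction:spans} the span $D(h_{m,n}^\pm)$ has apex the divisor $Z(h_{m,n}^\pm) = \{ h_{m,n}^\pm = 0\}$, and by Remark~\ref{rmk:explicit-construction} we have $\rho_{D(h_{m,n}^\pm)}(Z) = Z \times_{G \times G} Z(h_{m,n}^\pm)$, which — writing $t_1, t_2$ for the two $\Gm$-coordinates carried by $Z$ and $t$ for the homotopy parameter — is schematically the closed subscheme of $Z \times \A^1_t$ cut out by the single equation $t\, g_n^\pm(t_1,t_2) + (1-t)\, g_m^\pm(t_1,t_2) = 0$. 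From this description the only way $\rho_{D(h_{m,n}^\pm)}(Z) \to (\text{base})$ can fail to be finite is through a mild degeneracy — a fibre that is not finite, or escape to infinity — and both are controlled by how the exponents $m, n$ compare to the degree data of $Z$: recall $t_1 \in \scr O(Z)^\times$ is a unit, and a loss of properness can only occur along a family of points of $Z$ whose coordinate $t_1$ runs to $0$ or $\infty$.

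This brings us to the crux, which is exactly the moving step of Voevodsky's cancellation theorem. Since $Z$ is the apex of a tangentially framed correspondence, $Z \to G \times Y$ is finite, so there is an integer $d(Z)$ (a fibrewise degree) such that, fibrewise over $Y$, the coordinate $t_1$ satisfies a monic equation of degree $\le d(Z)$. The functions $g_n^\pm(t_1,t_2) = t_1^n + 1$ and $t_1^n + t_2$ are chosen precisely so that, once $m, n > d(Z)$, the top-order terms $t_1^n, t_1^m$ dominate near $t_1 = \infty$ and the unit term ($1$, resp.\ $t_2$) dominates near $t_1 = 0$, excluding both degeneracies; hence $i_0(Z) = d(Z) + 1$ works, uniformly in the homotopy parameter $t$ and, with the usual care at non-closed points, in families over $Y$. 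I expect this dimension/degree estimate to be the only real obstacle; everything else (reduction to path components, the fibre-product description, the choice of $i_0$) is formal, and I would carry the estimate out following \cite[\S4]{voevodsky2002cancellation} and its framed refinement in \cite{agp}. Granting it, the reduction of the first paragraph completes the proof — and, downstream, it is what allows the partially defined, $\A^1$-homotopic retractions $\rho_{D(g_n^\pm)}$ to assemble on $\colim_i F_i = \Omega_G \Sigma_G$ into the lax-module retraction of the unit needed to invoke Theorem~\ref{thm:abstract-cancellation}.
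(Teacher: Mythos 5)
Your reduction is the same as the paper's: the $F_i$ are unions of path components, so $\colim_i F_i$ is a sectionwise union and the claim reduces to the pointwise statement that, for each $Z\colon G\times Y \rightsquigarrow G\times X$, the image $\rho_{D(h_{m,n}^\pm)}(Z)$ is finite over $Y\times\A^1$ for all sufficiently large $m,n$. Your identification of $\rho_{D(h_{m,n}^\pm)}(Z)$ with the vanishing locus of $t\,g_n^\pm(f_1,f_2) + (1-t)\,g_m^\pm(f_1,f_2)$ inside $Z\times\A^1$ (via Construction~\ref{construction:spans} and Remark~\ref{rmk:explicit-construction}) also matches the paper.

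The gap is in the finiteness estimate, and it is a real one: your claim that $i_0(Z) = d(Z)+1$ suffices, with $d(Z)$ a ``monic degree of $t_1$ fibrewise over $Y$'', is not correct as stated. First, $Z \to G\times Y$ being finite does not make $Z \to Y$ finite, so $t_1 = f_1$ does not a priori satisfy a monic equation over $Y$; one must first compactify. The paper does this via Zariski's main theorem, producing $Z \hookrightarrow \bar C$ with $\bar C \to \P^1\times Y$ finite. Second, and more importantly, in the $g_n^-(t_1,t_2)=t_1^n + t_2$ case the quantity that must be controlled near $\bar f_1^{-1}(\infty)$ is not the degree of $f_1$ alone but the growth of $f_2$ relative to $f_1$: one needs $f_2/\bar f_1^N$ to be regular there (and $\bar f_1^N/f_2$ regular near $\bar f_1^{-1}(0)$), which is the bound $N$ the paper extracts. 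With that $N$ in hand, finiteness follows by the properness argument: one checks that $Z(h^\pm) \hookrightarrow \bar C\times\A^1$ is a closed immersion (Zariski-locally over $\bar f_1^{-1}(0)$, $\bar f_1^{-1}(\infty)$ and $\bar f_1^{-1}(G)$), and then $Z(h^\pm)\to Y\times\A^1$ is both affine and proper, hence finite. You have flagged the estimate as the crux and pointed to the right sources, but the specific bound you propose does not account for $f_2$, and the compactification step is needed before any degree statement makes sense.
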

\begin{proof}
We follow \cite[Lemma 4.1 and Remark 4.2]{voevodsky2002cancellation}.
Suppose given $Y \leftarrow Z \to X \in \Map_{\Cor^\fr(S)}(Y, X)$.
We shall exhibit an integer $N$ such that for all $m, n > N$ the projection $Z' = \rho_{D(h_{m,n}^\pm)}(Z) \to Y \times \A^1$ is finite; this will prove what we want.
Write $f_1, f_2: Z \to G$ for the two projections.
Using Zariski's main theorem, we can form a commutative diagram
\begin{equation*}
\begin{CD}
Z @>>> \bar C \\
@V{f_1 \times p_Y}VV  @V{\bar f_1 \times p_Y}VV \\
G \times Y @>>> \P^1 \times Y,
\end{CD}
\end{equation*}
where $\bar f_1 \times p_Y$ is finite.
There exists $N$ such that the rational function $\bar f_1^N/f_2$ is regular in a neighbourhood $U_0$ of $\bar f_1^{-1}(0)$ and $f_2/\bar f_1^N$ is regular in a neighbourhood $U_\infty$ of $\bar f_1^{-1}(\infty)$.\todo{details?}
We have the function $h = h_{m,n}^\pm(t, f_1, f_2)$ on $Z \times \A^1$, and Remark \ref{rmk:explicit-construction} implies that $Z' = Z(h) \subset Z \times \A^1$.
The composite $\bar C \times \A^1 \to \P^1 \times Y \times \A^1 \to Y \times \A^1$ is projective, and $Z(h) \to Y \times \A^1$ is affine.
We will finish the proof by showing that $i^\pm: Z(h) \to \bar C \times \A^1$ is a closed immersion for $n,m > N$; indeed then $Z(h) \to Y \times \A^1$ will be both proper and affine, and hence finite as desired.

Note that $h^+$ extends to the regular map $t \bar f_1^m + (1-t) \bar f_1^n + 1: \bar C \to \P^1$, which does not vanish if $\bar f_1 \in \{0, \infty\}$.
Thus $i^+$ is always a closed immersion.

Now we deal with $i^-$.
Let $U_1 = \bar f_1^{-1}(G)$.
A morphism being a closed immersion is local on the target \cite[Tag 01QO]{stacks-project}, so it is enough to show that $i$ is a closed immersion over $U_0, U_\infty$ and $U_1$.
This is clear for $U_1$.
Consider the function $h_0 = t\bar f_1^n/f_2 + (1-t)\bar f_1^m/f_2 + 1$.
By construction, this is regular on $h_0$, so $Z(h_0) \subset U_0$ is closed.
Also by construction, $h_0 = 1$ if $\bar f_1 = 0$, and $h^- = f_2 h_0$ on $U_0 \setminus 0$, where $f_2$ is a unit.
It follows that $Z(h_0) = U_0 \cap Z(h)$.
A similar argument works for $U_\infty$.
\end{proof}

Using Construction \ref{construction:transformation}, we thus obtain a sequence of lax module transformations
\begin{equation*}
\begin{tikzcd}
\id \ar[r, dashed] & F_0 \ar[r, hookrightarrow] \ar[l, "\rho_0^\pm", bend left=20] & F_1 \ar[r, hookrightarrow] \ar[ll, "\rho_1^\pm" swap, bend right=20] & \dots \ar[r, hookrightarrow] \ar[lll, "\rho_2^\pm" swap, bend right=40] & \Omega_G \Sigma_G,
\end{tikzcd}
\end{equation*}
where the arrows to the right form a colimit diagram.
The dashed arrow might not exist, but the lemma above implies that its composite sufficiently far to the right does, and this is all we need.\footnote{One may verify that the arrow actually does exist.}
For $m \ge n$, the $h_{m,n}^\pm$ induce $\A^1$-homotopies making the following diagram commute
\begin{equation*}
\begin{tikzcd}
F_n \ar[r, hookrightarrow] \ar[d, "\rho_n^\pm"] & F_m \ar[dl, "\rho_m^\pm"] \\
\id
\end{tikzcd}
\end{equation*}
Applying $L_{\A^1}$, there are thus induced transformations on the colimit
\begin{equation*}
\begin{tikzcd}
L_{\A^1} \ar[r, "u"] & L_{\A^1} \Omega_G \Sigma_G \ar[l, "\rho^\pm", bend left=20].
\end{tikzcd}
\end{equation*}
After group completion, we may take the difference, and hence obtain \[ \rho = \rho^+ - \rho^-: L_{\A^1}^\gc \Omega_G \Sigma_G \to L_{\A^1}^\gc. \]

We are now ready to prove our main result.
\begin{theorem}
Let $k$ be an infinite perfect field.
Then the unit transformation \[ u: \id \to \Omega_{\Gm} \Sigma_\Gm \in \End(\Spc^\fr(k)^\gc) \] is an equivalence.
\end{theorem}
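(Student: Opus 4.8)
The plan is to deduce the statement from the abstract cancellation theorem (Theorem~\ref{thm:abstract-cancellation}, in the $\infty$-categorical form of Remark~\ref{rmk:infinity-cancellation}) applied to $\scr C = \Spc^\fr(k)^\gc$ and $G = \Gm = \A^1 \setminus 0$. Three inputs are required: (i) $\Gm$ is a symmetric object of $\Spc^\fr(k)^\gc$; (ii) the lax module transformation $\rho = \rho^+ - \rho^-$ constructed above genuinely defines a transformation $\Omega_\Gm\Sigma_\Gm \to \id$ of endofunctors of $\Spc^\fr(k)^\gc$; and (iii) $\rho$ is a retraction of the unit $u \colon \id \to \Omega_\Gm\Sigma_\Gm$. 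Point (i) is inherited from $\Spc(k)_*$ along the symmetric monoidal functor $\gamma^*$, since a cyclic permutation on a suitable smash power of $\Gm$ is $\A^1$-homotopic to the identity there. Granting (i)--(iii), Theorem~\ref{thm:abstract-cancellation} gives that $u$ and $\rho$ are inverse equivalences, which is exactly the claim.

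For (ii), recall that the transformations $\rho^\pm$, the subfunctors $F_i \subset \Omega_\Gm\Sigma_\Gm$, and the comparison homotopies supplied by the $h_{m,n}^\pm$ were produced on $L_{\A^1}$-local (and then group-complete) framed presheaves, using that every object of $\Cor^\fr_L(k)$ is strongly dualizable and Lemma~\ref{lemm:recognize-finite} to descend constructions from $\Cor^\fr_L(k)$ to $\Cor^\fr(k)$; the difference $\rho^+ - \rho^-$ makes sense only after group completion, which is why the target is $\Spc^\fr(k)^\gc$. To transport all of this to $\Spc^\fr(k)^\gc = L_\mot^\gc \PSh_\Sigma(\Cor^\fr(k))$ I would argue as in the proof of Proposition~\ref{prop:S1-cancellation}: $\Omega_\Gm$ is computed sectionwise and preserves Nisnevich-local and $\A^1$-invariant presheaves, so it descends to $\Spc^\fr(k)^\gc$; and by the strict $\A^1$-invariance input of \S\ref{sec:strict-A1} (Corollary~\ref{cor:strictly-invariant}), for $\scr Y \in \Spc^\fr(k)^\gc$ the Nisnevich sheafification of $\Gm \otimes \scr Y$ is already motivically local, so that $\Omega_\Gm\Sigma_\Gm$ on $\Spc^\fr(k)^\gc$ agrees with the presheaf-level functor. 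Consequently $\rho^\pm$, and hence $\rho$, define the asserted lax module transformation on $\Spc^\fr(k)^\gc$.

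The crux is (iii). By Example~\ref{ex:id-endo}, a lax $\scr C$-module transformation $\id \to \id$ is determined by its component at the unit object $\1 = (\operatorname{Spec} k, 0)$, so it suffices to show that the composite $\1 \xrightarrow{u_\1} \Omega_\Gm\Sigma_\Gm\1 \xrightarrow{\rho_\1} \1$ is the identity framed correspondence. Unwinding Construction~\ref{construction:transformation} and the explicit recipe $\rho_\alpha(Z) \wequi Z \times_{G \times G} C$ of Remark~\ref{rmk:explicit-construction}, this reduces to a concrete computation with spans: $u_\1(\1)$ is the identity span, and $\rho^\pm_\1$ applied to it is the span cut out of $\Gm \times \Gm$ by the endpoint functions $g_n^\pm$ of the interpolations $h_{m,n}^\pm$. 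The content is the identity $\rho^+_\1 u_\1(\1) - \rho^-_\1 u_\1(\1) \wequi \id_\1$ in the group completion, together with its compatibility with the colimit over $n$ (via the $\A^1$-homotopies coming from the $h_{m,n}^\pm$) and with $\A^1$-localization. This is the framed avatar of Voevodsky's key computation \cite[\S4]{voevodsky2002cancellation}, carried out in the setting of $\Cor^\fr_L$ in \cite{agp}; I expect it to be the main obstacle, the delicate points being the precise identification of the two spans together with their $K$-theoretic framings in $\Cor^\fr_L(k)$, and checking that their difference really collapses to $\id_\1$ after group completion and $\A^1$-localization.

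With (i)--(iii) in hand, Theorem~\ref{thm:abstract-cancellation} and Remark~\ref{rmk:infinity-cancellation} show that $u \colon \id \to \Omega_\Gm\Sigma_\Gm$ is an equivalence of endofunctors of $\Spc^\fr(k)^\gc$, completing the proof.
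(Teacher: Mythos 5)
Your overall strategy matches the paper's (abstract cancellation theorem applied to $\Gm$ in $\Spc^\fr(k)^\gc$, fed by the $\rho^\pm$ machinery), but three points are off, and the first is a genuine gap that you present as routine.

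First, your justification of (i) — that $\Gm$ is symmetric because ``a cyclic permutation on a suitable smash power of $\Gm$ is $\A^1$-homotopic to the identity'' in $\Spc(k)_*$ — is not correct. The elementary-matrices argument gives that cyclic permutations act trivially (up to $\A^1$-homotopy) on $T^{\wedge n} = \A^n/(\A^n \setminus 0)$ and on $(\P^1)^{\wedge n}$, because these are quotients by a linear action; it does \emph{not} apply to $\Gm^{\wedge n}$, which is not such a quotient, and indeed no direct unstable argument for symmetry of $\Gm$ in $\Spc(k)_*$ is available (stably the swap on $\Gm^{\wedge 2}$ is $\langle -1 \rangle$, which is generally nontrivial). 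The paper instead deduces symmetry of $\Gm$ in $\Spc^\fr(k)^\gc$ from the symmetry of $T \wequi S^1 \wedge \Gm$ together with the fact that $S^1$ is symmetric \emph{and semi-invertible} in $\Spc^\fr(k)^\gc$ — the latter being exactly $S^1$-cancellation, Proposition~\ref{prop:S1-cancellation}. So (i) really depends on a prior cancellation result and is not inherited for free along $\gamma^*$.

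Second, you write ``$G = \Gm = \A^1 \setminus 0$,'' but these are not the same object in $\Cor^\fr(k)$ or $\Spc^\fr(k)^\gc$: $G$ is the unreduced scheme $\A^1 \setminus 0$, while $\Gm$ is its reduced (pointed) avatar. The transformation $\rho$ that was built from the $g_n^\pm$, $h_{m,n}^\pm$, and Construction~\ref{construction:transformation} is a lax module transformation $\Omega_G \Sigma_G \to \id$, not $\Omega_\Gm\Sigma_\Gm \to \id$. To get the latter, one must use the splitting $G \wequi \1 \oplus \Gm$ (available by semiadditivity of $\Spc^\fr(k)^\gc$) to realize $\Omega_\Gm \Sigma_\Gm$ as a retract of $\Omega_G \Sigma_G$ and then define $\rho' := \rho|_{\Omega_\Gm\Sigma_\Gm}$. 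Your proposal skips this step entirely.

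Third, and as a consequence of the second point, the identity you state for (iii), namely $\rho^+_\1 u_\1(\1) - \rho^-_\1 u_\1(\1) \wequi \id_\1$, is not the calculation that actually needs to be done. Under the splitting $G \wequi \1 \oplus \Gm$, the identity $\id_\Gm$ corresponds to $\id_G - p$ in $\Hom(G,G)$ (where $p$ is the composite $G \to * \to G$), so the correct statement is $\rho' u(\id_\1) = \rho(\id_G) - \rho(p) = \id_\1$, which is exactly the content of Lemma~\ref{lemm:cancel-final}: part (1) gives $\rho^\pm(p)$ agree, and part (2) gives $\rho^+(\id_G) \stackrel{\A^1}{\wequi} \rho^-(\id_G) + \id_\1$; their difference is $\id_\1$. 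Your flagging of this as ``the main obstacle'' is fair, but without the $G \wequi \1 \oplus \Gm$ reduction the computation does not even land in the right form. Finally, a smaller imprecision in your (ii): $\Omega_\Gm$ is not sectionwise the way $\Omega_{S^1}$ is, and commuting it past $L_\mot$ is a nontrivial input (Corollary~\ref{cor:Lmot-OmegaGm}, which rests on strict homotopy invariance), not simply ``as in Proposition~\ref{prop:S1-cancellation}.''
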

\begin{proof}
We seek to apply the abstract cancellation Theorem \ref{thm:abstract-cancellation} (in the guise of Remark \ref{rmk:infinity-cancellation}).
Note that $\Gm$ is symmetric in $\Spc^\fr(k)^\gc$: $T \wequi S^1 \wedge \Gm$ is symmetric by the usual argument, and $S^1$ is (symmetric and) semi-invertible (by $S^1$-cancellation, i.e. Proposition \ref{prop:S1-cancellation}).
We have already constructed a lax module transformation \[ \rho: L_\mot^\gc \Omega_G \Sigma_G \to L_\mot^\gc. \]
Corollary \ref{cor:Lmot-OmegaGm} in \S\ref{sec:strict-A1} shows that $L_\mot^\gc \Omega_G \Sigma_G \wequi \Omega_G \Sigma_G L_\mot^\gc$ and hence we obtain a lax module transformation \[ \rho: \Omega_G \Sigma_G \to \id \in \End(\Spc^\fr(k)^\gc). \]
In $\Spc^\fr(k)^\gc$ there is a splitting $G \wequi \1 \oplus \Gm$, and hence a retraction $\Gm \to G \to \Gm$.
This induces a retraction of lax module functors \[ \Omega_\Gm \Sigma_\Gm \to \Omega_G \Sigma_G \to \Omega_\Gm \Sigma_\Gm, \] which in particular allows us to build the lax module transformation \[ \rho': \Omega_\Gm \Sigma_\Gm \to \Omega_G \Sigma_G \to \id. \]
In order to apply the abstract cancellation theorem, it remains to verify that $\rho' u \wequi \id$.
Via Example \ref{ex:id-endo}, for this it suffices to compute the effect of $\rho' u$ on $\id_\1$.
Now $u(\id_\1) = \id_\Gm$, which corresponds to $\id_G - p \in \Hom(G, G)$, where $p: G \to * \to G$, and so $\rho' u(\id_\1) = \rho(\id_G) - \rho(p)$.
The result thus follows from Lemma \ref{lemm:cancel-final} below.
\end{proof}

\begin{lemma} \label{lemm:cancel-final}
For each $n > 0$ we have
\begin{enumerate}
\item $\rho_n^+(p) = \rho_n^-(p)$, and
\item $\rho_n^+(\id_G) \stackrel{\A^1}{\wequi} \rho_n^-(\id_G) + \id_\1$.
\end{enumerate}
\end{lemma}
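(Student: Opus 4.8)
The plan is to compute $\rho_n^\pm$ completely explicitly on $\id_G$ and on $p$, and then check the two assertions by hand. Recall that $\rho_n^\pm=\rho_{D(g_n^\pm)}$ with $g_n^+(t_1,t_2)=t_1^n+1$ and $g_n^-(t_1,t_2)=t_1^n+t_2$, and that by Remark~\ref{rmk:explicit-construction} the transformation $\rho_{D(g_n^\pm)}$ sends a span $G\times Y\leftarrow Z\to G\times X$ to $Z\times_{G\times G}Z(g_n^\pm)$ with its induced framing, the two maps to $G\times G$ being the two "$G$-legs" of $Z$ and the defining inclusion $Z(g_n^\pm)\hookrightarrow G\times G$. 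Since $\id_G$ is the span $G\xleftarrow{\id}G\xrightarrow{\id}G$ while $p$ is the span $G\xleftarrow{\id}G\xrightarrow{c}G$ with $c$ the constant map at $1\in G$, chasing this formula together with the framing recipe of Construction~\ref{construction:spans} and the duality isomorphisms of Proposition~\ref{prop:duals} shows that $\rho_n^\pm(\id_G)$ is the framed correspondence $\1\rightsquigarrow\1$ with underlying scheme $\{s:g_n^\pm(s,s)=0\}\subset\A^1\setminus 0$, and $\rho_n^\pm(p)$ the one with underlying scheme $\{s:g_n^\pm(s,1)=0\}$, in both cases with the framing induced by the differential of the defining function (and the canonical trivialisation of $L_{\A^1\setminus 0}$ by $ds$).

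Part (1) is then formal: $g_n^+$ does not involve $t_2$ at all, and $g_n^-(t_1,1)=t_1^n+1$, so $g_n^+(\ph,1)$ and $g_n^-(\ph,1)$ are literally the same function on $\A^1\setminus 0$, and the induced framings agree on the nose, the $t_2$-contribution to the framing of $Z(g_n^-)$ being pulled back along the constant section $t_2=1$ and so dying. Hence $\rho_n^+(p)=\rho_n^-(p)$ already in $\Map_{\Cor^\fr(k)}(\1,\1)$.

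For part (2) one finds that $\rho_n^+(\id_G)$ has underlying scheme $\mathrm{Spec}\,k[s]/(s^n+1)$ with framing $d(s^n+1)$, while, using $g_n^-(s,s)=s^n+s=s(s^{n-1}+1)$ and that $s$ is a unit on $\A^1\setminus 0$, $\rho_n^-(\id_G)$ has underlying scheme $\mathrm{Spec}\,k[s]/(s^{n-1}+1)$ with framing $d(s^n+s)$. I would produce the $\A^1$-homotopy from the concordance $\mathcal Z=\{(s,t)\in\A^1\times\A^1:s^n+(1-t)s+t=0\}$: being cut out by a polynomial monic of degree $n$ in $s$, $\mathcal Z$ is finite flat over $\A^1_t$, and being a hypersurface in $\A^1\times\A^1$ it carries a framing relative to $\A^1_t$ via $\partial_s(s^n+(1-t)s+t)\,ds=(ns^{n-1}+1-t)\,ds$, so $\mathcal Z$ is an $\A^1$-homotopy of framed correspondences $\1\rightsquigarrow\1$. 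Its fibre over $t=1$ is $\{s^n+1=0\}$ with framing $ns^{n-1}\,ds=d(s^n+1)$, i.e. $\rho_n^+(\id_G)$; its fibre over $t=0$ is $\{s^n+s=0\}=\mathrm{Spec}\,k\sqcup\mathrm{Spec}\,k[s]/(s^{n-1}+1)$ (the ideals $(s)$ and $(s^{n-1}+1)$ being coprime), and the relative framing restricts to $d(s^n+s)$ on the second factor, giving $\rho_n^-(\id_G)$, and to $1\cdot ds=ds$ on the first, i.e. the trivial framing, giving $\id_\1$. This yields $\rho_n^+(\id_G)\stackrel{\A^1}{\wequi}\id_\1+\rho_n^-(\id_G)$ for $n\ge 2$; for $n=1$ the statement is immediate, as then $\rho_1^+(\id_G)=\id_\1$ and $\rho_1^-(\id_G)=0$.

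The step requiring genuine care is the framing bookkeeping: checking that the framings attributed above to $\rho_n^\pm(\id_G)$ and $\rho_n^\pm(p)$ are indeed those obtained by composing the duality equivalences of Proposition~\ref{prop:duals}, the recipe of Construction~\ref{construction:spans}, and the fibre-product description of Remark~\ref{rmk:explicit-construction}, and that the relative framing of $\mathcal Z$ restricts at the two ends exactly as claimed. The delicate point is that the escaping point at $t=0$ must carry the \emph{trivial} framing rather than a $\langle u\rangle$-twist; the $n=1$ computation $\partial_s(s^n+(1-t)s+t)|_{s=0,\,t=0}=2$ shows this is not automatic and would ruin the identification with $\id_\1$, so one genuinely uses $n\ge 2$ here (and the cancellation theorem only needs $n\gg 0$ anyway). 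Concretely this all reduces to a verification with units and differentials over $\A^1$ and $\A^1\setminus 0$; the signs come out right because the operative framing is the one given by $\partial_s$ of the defining equation, which restricts correctly at $t=0$ and $t=1$. Part (1), by contrast, is a pure identity needing no homotopy.
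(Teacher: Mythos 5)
Your proof is essentially the paper's proof. For (1) both use that $g_n^+(\ph,1)=g_n^-(\ph,1)$ literally. For (2) both use the same concordance: your $\{s^n+(1-t)s+t=0\}\subset\A^1_s\times\A^1_t$ is, after renaming $t\mapsto 1-s$ and swapping which letter is the homotopy parameter, precisely the paper's $H=D(t^n+ts+1-s)\colon\A^1\rightsquigarrow *$, and both conclude by splitting the $t=0$ (resp.\ the paper's $s=1$) fibre as $\{0\}\coprod Z^-$ with $\{0\}$ giving $\id_\1$. The one genuine addition in your write-up is the observation that for $n=1$ the concordance degenerates (it is no longer monic in the fibre variable, so it is not a finite correspondence, and the candidate framing at the escaping point is $\langle 2\rangle$ rather than $\langle 1\rangle$); the paper states the lemma for all $n>0$ and applies the homotopy uniformly without flagging this. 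Your separate treatment of $n=1$ (where $\rho_1^-(\id_G)=0$ since $Z(2t)\cap(\A^1\setminus 0)=\emptyset$, and $\rho_1^+(\id_G)=\id_\1$) closes that gap, though it is irrelevant for the application, which only uses $n\gg 0$.
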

\begin{proof}
This is essentially \cite[Lemma 4.3]{voevodsky2002cancellation}.

Note that $p$ is represented by the correspondence $G \xleftarrow{\wequi} G \xrightarrow{1} G$, so that by Remark \ref{rmk:explicit-construction}, $\rho_n^\pm(p)$ is represented by $Z(g_n^\pm(t,1)) \to *$.
But $g_n^+(t,1) = g_n^-(t,1)$, whence (1).

Similarly $\rho_n^\pm(\id_G)$ is represented by $Z_\pm := Z(g_n^\pm(t,t))$, so $Z_+ = Z(t^n+1)$ and $Z_- = Z(t^n+t)$, where both $t^n+1, t^n+t$ are viewed as functions on $\A^1 \setminus 0$.
Consider $H = D(t^n + ts + 1-s): \A^1 \rightsquigarrow *$, where we view $h$ as a function $\A^1 \times \A^1 \to \A^1$.
Then $H$ provides an $\A^1$-homotopy between $D(t^n+1)$ and $D(t^n+t)$, where this time we view $t^n+1, t^n+t$ as functions on $\A^1$.
Now \[ Z(t^n+1|\A^1) = Z(t^n+1|\A^1 \setminus 0) = Z^+, \] whereas \[ Z(t^n+t|\A^1) = Z(t^n+t|\A^1 \setminus 0) \amalg \{0\} = Z^- \amalg \{0\}. \]
Since $0 \subset \A^1 \to *$ defines the identity correspondence, $H$ provides the desired homotopy.

This concludes the proof.
\end{proof}

\section{Strict $\A^1$-invariance}
\label{sec:strict-A1}
Primary sources: \cite{hitr,DruzhininPanin}.

\newcommand{\cok}{\mathrm{cok}}

\subsection{Introduction}
The title of this section derives from the following.
Write $\PSh_\Sigma(\Sm_k, \Ab)$ for the category of additive presheaves of abelian groups on $\Sm_k$.
\begin{definition}
Let $F \in \PSh_\Sigma(\Sm_k, \Ab)$.
Then $F$ is called \emph{$\A^1$-invariant} (or sometimes \emph{($\A^1$-)homotopy invariant}) if for all $X \in \Sm_k$, the canonical map $F(X) \to F(X \times \A^1)$ is an isomorphism.

Moreover $F$ is called \emph{strictly $\A^1$-invariant} if for all $n \ge 0$ and all $X \in \Sm_k$ the canonical map $H^n_\Nis(X, F) \to H^n_\Nis(X \times \A^1, F)$ is an isomorphism.
\end{definition}

\begin{remark} Observe that if $F$ is an abelian presheaf, then $F$ is $\A^1$-invariant if and only if the map $F(X \times \A^1) \rightarrow F(X)$ induced by the zero section $X \rightarrow X \times \A^1$ is injective. We will use this without further comment throughout the sequel.
\end{remark}

There are two important observations regarding this:
\begin{enumerate}
\item If $F$ is an $\A^1$-invariant presheaf, it need not be the case that $a_\Nis F$ is $\A^1$-invariant (let alone strictly $\A^1$-invariant).
\item If $F$ is an $\A^1$-invariant sheaf, it need not be strictly $\A^1$-invariant.
\end{enumerate}
However, it turns out that in the presence of transfers, neither of these problems occurs.
The first general results in this direction were probably obtained by Voevodsky in \cite{voevodsky2000cohomological}.
Here is a version for framed presheaves.

\begin{theorem} \label{thm:strictly-invariant}
Let $k$ be a field, and $F \in \PSh_\Sigma(\Cor^\fr(k), \Ab)$.
Suppose that $F$ is $\A^1$-invariant.
\begin{enumerate}
\item For $U \subset \A^1$ open we have $H^0_\Nis(U, F) \wequi F(U)$.
\item The sheafification $a_\Nis F$ is $\A^1$-invariant.
\item If $k$ is perfect, then $a_\Nis F$ is strictly $\A^1$-invariant.
\end{enumerate}
\end{theorem}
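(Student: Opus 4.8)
The plan is to reduce all three assertions to the strict-homotopy-invariance machinery already available in the classical ``framed presheaves'' literature of Garkusha--Panin and collaborators, rather than reprove that machinery (which would be a project of its own). The preliminary step --- and the one that takes the most care --- is a dictionary. Via the comparison between $\Cor^\fr(k)$ and Voevodsky's categories of (equationally or tangentially) framed correspondences --- recalled in part in \S\ref{sec:reconstruction} and treated fully in \cite{EHKSY1} --- the object $F$ corresponds to a homotopy invariant quasi-stable framed presheaf of abelian groups in the sense of \cite{garkusha2014framed,hitr}. Here one uses that an object of $\PSh_\Sigma(\Cor^\fr(k),\Ab)$ is by definition product-preserving, so the associated framed presheaf is automatically additive and quasi-stable, and that homotopy invariance of $F$ as an object of $\PSh_\Sigma(\Cor^\fr(k),\Ab)$ is the same condition as homotopy invariance of its underlying presheaf on $\Sm_k$. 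Conversely such a framed presheaf reassembles to an object of $\PSh_\Sigma(\Cor^\fr(k),\Ab)$, and the two constructions are mutually inverse and compatible with Nisnevich sheafification. Once this identification is pinned down, the rest is a sequence of citations.

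For parts (1) and (2) I would invoke the main results of \cite{hitr}: over an infinite field, a homotopy invariant quasi-stable framed presheaf of abelian groups restricts to a Nisnevich sheaf on the small Nisnevich site of $\A^1$ (indeed of any smooth affine curve), and its Nisnevich sheafification is again homotopy invariant and carries framed transfers. Transporting this back along the dictionary yields (1) and (2); the only points to check are that the small Nisnevich site of $\A^1$ and the framed-transfer structure on $a_\Nis F$ agree on the two sides.

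For part (3) I would invoke the strict homotopy invariance theorem of Druzhinin--Panin \cite{DruzhininPanin}, which builds on \cite{hitr}: over a \emph{perfect} field, the Nisnevich sheafification of a homotopy invariant quasi-stable framed presheaf of abelian groups is strictly homotopy invariant. I would not reprove this; the underlying mechanism is the one going back to Voevodsky \cite{voevodsky2000cohomological}. One produces a Gersten/coniveau resolution of $a_\Nis F$ on smooth local $k$-schemes --- here perfectness of $k$ enters, ensuring enough separable residue field extensions --- so that $H^n_\Nis(X, a_\Nis F)$ is computed by the coniveau complex whose terms are built from the values of $a_\Nis F$ on residue fields, where homotopy invariance is part (2) specialized to fields; homotopy purity then identifies the complex for $X$ with the one for $X \times \A^1$, giving $H^n_\Nis(X, a_\Nis F) \cong H^n_\Nis(X \times \A^1, a_\Nis F)$ for all $n$. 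Accordingly, the main obstacle is not geometry but the bookkeeping of the preliminary step: making the equivalence between $\PSh_\Sigma(\Cor^\fr(k),\Ab)$ and the classical category of framed presheaves of abelian groups precise enough --- matching the Nisnevich topologies, verifying that the additivity and quasi-stability hypotheses of \cite{hitr,DruzhininPanin} are automatic for objects coming from $\PSh_\Sigma(\Cor^\fr(k))$, and checking that $a_\Nis$ and $\A^1$-localization commute with the equivalence --- so that the cited theorems apply on the nose.
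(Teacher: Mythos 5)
Your proposal follows essentially the same route as the paper: both parts reduce to the strict homotopy invariance results of Garkusha--Panin \cite{hitr} and Druzhinin--Panin \cite{DruzhininPanin} for equationally framed presheaves via the comparison with $\Cor^\fr(k)$. One small correction to the "dictionary" step: you do not need (and do not have) mutually inverse constructions making the two presheaf categories equivalent --- the paper simply observes that restricting $F$ along the functor $\lambda : \Cor^\efr_*(k) \to h\Cor^\fr(k)$ of \cite[\S3.4.7]{EHKSY1} produces a stable, radditive, homotopy-invariant equationally framed presheaf with the same underlying presheaf on $\Sm_k$, which is all that the conclusions (1)--(3) concern.
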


We can escalate the above result as follows.
\begin{corollary} \label{cor:strictly-invariant}
Let $k$ be a perfect field, and $F \in \PSh_\Sigma(\Cor^\fr(k))^\gc$ be $\A^1$-invariant.
Then $L_\Nis F$ is $\A^1$-invariant, and hence motivically local.
\end{corollary}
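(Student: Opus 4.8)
The plan is to bootstrap from the abelian-group-valued assertion of Theorem~\ref{thm:strictly-invariant}, by passing to homotopy sheaves and then recombining the resulting information with the Nisnevich descent spectral sequence.

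First I would unwind the hypothesis: an object $F \in \PSh_\Sigma(\Cor^\fr(k))^\gc$ is a finite-product-preserving presheaf of grouplike $E_\infty$-spaces on $\Cor^\fr(k)$, hence --- via the recognition principle --- a presheaf of connective spectra equipped with framed transfers. For each $t \ge 0$ the homotopy presheaf $\pi_t F$ again preserves finite products, so $\pi_t F \in \PSh_\Sigma(\Cor^\fr(k), \Ab)$; and the assumed $\A^1$-invariance of $F$, namely that $F(X) \to F(X \times \A^1)$ is an equivalence for every $X \in \Sm_k$, passes to each $\pi_t F$. Since $k$ is infinite and perfect, I can then apply Theorem~\ref{thm:strictly-invariant}(3) to each $\pi_t F$ and conclude that the Nisnevich sheaf $a_\Nis \pi_t F$ is strictly homotopy invariant.

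Next I would recombine these via descent. For $X \in \Sm_k$, a finite-dimensional Noetherian scheme, the Nisnevich descent spectral sequence for the presheaf of connective spectra $F$ takes the form
\[
E_2^{s,t} = H^s_\Nis\big(X, a_\Nis \pi_t F\big) \;\Longrightarrow\; \pi_{t-s}\big((L_\Nis F)(X)\big),
\]
and it converges strongly because the Nisnevich cohomological dimension of $X$ is bounded by $\dim X$. The projection $X \times \A^1 \to X$ induces a map of such spectral sequences which is an isomorphism on $E_2$, precisely because each $a_\Nis \pi_t F$ is strictly homotopy invariant by the previous step; hence the map is an isomorphism on abutments, so $(L_\Nis F)(X) \to (L_\Nis F)(X \times \A^1)$ is an equivalence. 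This shows $L_\Nis F$ is $\A^1$-invariant. To deduce that it is motivically local, I would observe that $L_\Nis F$ is in particular a Nisnevich sheaf, hence local with respect to Nisnevich-local equivalences, and $\A^1$-invariant by what we have just shown, hence local with respect to the projections $Y \times \A^1 \to Y$; the strongly saturated class of morphisms inverted by $\Map(-, L_\Nis F)$ therefore contains all motivic equivalences, so $L_\Nis F$ is motivically local.

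The main obstacle --- or rather the only non-formal ingredient --- is the appeal to \emph{strict} homotopy invariance in the second paragraph, which is exactly where the perfectness of $k$ is used, and which is the content of the results cited in \S\ref{sec:strict-A1}. The second delicate point is the strong convergence of the descent spectral sequence; this is what upgrades ``the homotopy sheaves of $L_\Nis F$ are homotopy invariant'' to the assertion that $L_\Nis F$ is homotopy invariant on sections. Everything else is routine bookkeeping with homotopy sheaves and saturated classes.
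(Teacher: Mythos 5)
Your argument is exactly the one the paper uses: reduce to the homotopy presheaves $\pi_t F$, apply Theorem~\ref{thm:strictly-invariant}(3), and reassemble via the strongly convergent Nisnevich descent spectral sequence (equivalently, Postnikov tower induction). You have just spelled out the details that the paper's one-line proof leaves implicit.
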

\begin{proof}
By an induction on the Postnikov tower, or equivalently using the (strongly convergent) descent spectral sequence, this is immediate from Theorem \ref{thm:strictly-invariant}.
\end{proof}

We can also deduce the following fact, which is very important for the cancellation theorem.
\begin{corollary} \label{cor:Lmot-OmegaGm}
Let $k$ be a perfect field.
On the category $\PSh_\Sigma(\Cor^\fr(k))^\gc$, the canonical transformation $L_\mot \Omega_\Gm \to \Omega_\Gm L_\mot$ is an equivalence.
\end{corollary}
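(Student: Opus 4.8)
The plan is to reduce the statement to the ``one-step'' description of motivic localization that strict homotopy invariance makes available. Concretely, I would first observe that on $\PSh_\Sigma(\Cor^\fr(k))^\gc$ there is a natural equivalence $L_\mot \wequi L_\Nis \mathrm{Sing}^{\A^1}$, where $\mathrm{Sing}^{\A^1}$ is the naive $\A^1$-localization $(\mathrm{Sing}^{\A^1}\mathcal F)(X) = |\mathcal F(\Delta^\bullet\times X)|$. Indeed, for a grouplike framed $\mathcal F$ the presheaf $\mathrm{Sing}^{\A^1}\mathcal F$ is still grouplike (its $\pi_0$ is a quotient monoid of the group $\pi_0\mathcal F(-)$) and $\A^1$-invariant (standard contracting-homotopy argument), so by Corollary \ref{cor:strictly-invariant} the sheaf $L_\Nis\mathrm{Sing}^{\A^1}\mathcal F$ is $\A^1$-invariant, hence motivically local; and $\mathcal F \to \mathrm{Sing}^{\A^1}\mathcal F \to L_\Nis\mathrm{Sing}^{\A^1}\mathcal F$ is a motivic equivalence. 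This is the only step that uses that $k$ is infinite and perfect.

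Next I would record that the pointed loop functor $\Omega_\Gm$ commutes with both $L_\Nis$ and $\mathrm{Sing}^{\A^1}$ on all of $\PSh_\Sigma(\Cor^\fr(k))$. Writing $(\Omega_\Gm\mathcal F)(X) = \mathrm{fib}(\mathcal F(\Gm\times X)\xrightarrow{1^*}\mathcal F(X))$, the projection $\Gm\times X\to X$ splits the unit section, so $\mathcal F(\Gm\times X)\wequi \mathcal F(X)\times(\Omega_\Gm\mathcal F)(X)$ naturally; since geometric realization commutes with finite products, $\Omega_\Gm$ commutes with $\mathrm{Sing}^{\A^1}$. For $L_\Nis$: $\Omega_\Gm$ is a finite limit and finite limits commute with the (left exact) Nisnevich sheafification, and the remaining functor $\mathcal F\mapsto \mathcal F(\Gm\times -)$ commutes with $L_\Nis$ because $\Gm\times-\colon\Sm_k\to\Sm_k$ preserves Nisnevich covers and fibre products, hence preserves Nisnevich-local objects and, being stalk-preserving, Nisnevich-local equivalences. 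All three functors being endofunctors of $\PSh_\Sigma(\Cor^\fr(k))$, these commutations respect the framed transfers.

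Finally I would assemble the pieces via the formal fact that a functor preserving a class of equivalences and the corresponding local objects commutes with the associated localization. So it suffices to check: (b) $\Omega_\Gm$ sends motivically local grouplike framed presheaves to motivically local ones, and (a) $\Omega_\Gm$ sends motivic equivalences of grouplike framed presheaves to motivic equivalences. For (b): $\Omega_\Gm\mathcal F = \mathrm{fib}(\mathcal F(\Gm\times-)\to\mathcal F(-))$ is a Nisnevich sheaf when $\mathcal F$ is, and applying $\A^1$-invariance of $\mathcal F$ to this fibre sequence shows it is $\A^1$-invariant, hence motivically local by Corollary \ref{cor:strictly-invariant}. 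For (a): if $f$ is a motivic equivalence of grouplike framed presheaves, then using the first two paragraphs $L_\mot\Omega_\Gm f \wequi L_\Nis\mathrm{Sing}^{\A^1}\Omega_\Gm f \wequi \Omega_\Gm L_\Nis\mathrm{Sing}^{\A^1} f \wequi \Omega_\Gm L_\mot f$, which is an equivalence since $L_\mot f$ is. Then for any grouplike framed $\mathcal F$ the map $\Omega_\Gm\mathcal F\to\Omega_\Gm L_\mot\mathcal F$ is a motivic equivalence with motivically local target, so it is the motivic localization of $\Omega_\Gm\mathcal F$, i.e. the canonical map $L_\mot\Omega_\Gm\mathcal F\to\Omega_\Gm L_\mot\mathcal F$ is an equivalence. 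The main obstacle is really the content packaged in Corollary \ref{cor:strictly-invariant}: it is what allows $L_\mot$ to be computed in a single application of $L_\Nis\mathrm{Sing}^{\A^1}$; once that is granted, the rest is the bookkeeping of checking that the \emph{finite limit} $\Omega_\Gm$ nonetheless commutes with the sifted colimit $\mathrm{Sing}^{\A^1}$ and with sheafification, which works because the unit section of $\Gm$ splits $\Omega_\Gm\mathcal F$ off $\mathcal F(\Gm\times-)$.
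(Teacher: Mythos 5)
There is a genuine gap in the second paragraph, at the claim that $\Omega_\Gm$ commutes with $L_\Nis$ on all of $\PSh_\Sigma(\Cor^\fr(k))$. You argue that $\mathcal{F}\mapsto\mathcal{F}(\Gm\times-)$ commutes with $L_\Nis$ because $\Gm\times-$ ``preserves Nisnevich covers, is stalk-preserving, hence preserves Nisnevich-local equivalences.'' But $\Gm\times-$ is \emph{not} stalk-preserving: a Nisnevich point is (the spectrum of) a henselian local essentially smooth $k$-algebra, and $\Gm\times U$ for $U$ henselian local is not henselian local, so $\mathcal{F}(\Gm\times U)$ is not a stalk of $\mathcal{F}$. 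Equivalently, $\Gm\times-\colon\Sm_k\to\Sm_k$ is continuous but not cocontinuous (a Nisnevich cover of $\Gm\times X$ need not be refined by $\Gm\times(\text{cover of }X)$), so restriction along it preserves Nisnevich sheaves but does not commute with sheafification. Concretely, for $\mathcal{G}$ a sheaf and $K$ a field, $(\Omega_\Gm\mathcal{G})(K)$ involves $\mathcal{G}(\Gm_K)$, a section over a non-local scheme, and nothing formal forces $(L_\Nis\mathcal{G})(\Gm_K)=\mathcal{G}(\Gm_K)$ when $\mathcal{G}$ is merely a presheaf: there is higher Nisnevich cohomology on $\Gm_K$ in the way. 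Since the assembly in your third paragraph (both items (a) and (b)) rests on this commutation, the argument does not go through.

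This is exactly where the paper spends its effort. Its proof reduces, by Morel's unramifiedness theorem, to checking the map on sections over field extensions $K/k$; it then uses $L_\mot=L_\Nis L_{\A^1}$ and the commutation of $\Omega_\Gm$ with $L_{\A^1}$ to reduce to showing that for a homotopy invariant $F\in\PSh_\Sigma(\Cor^\fr(k),\Ab)$ one has $H^0_\Nis(\Gm_K,F)=F(\Gm_K)$ and $H^n_\Nis(\Gm_K,F)=0$ for $n>0$. The first uses Theorem~\ref{thm:strictly-invariant}(1) (the presheaf is already a Nisnevich sheaf on the small site of $\A^1$); the second uses strict homotopy invariance (Theorem~\ref{thm:strictly-invariant}(3)) together with $\P^1\wequi\Sigma\Gm$ and the fact that $\P^1$ has Nisnevich cohomological dimension one. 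In other words, the commutation with $L_\Nis$ is not a general site-theoretic formality; it is a theorem about homotopy invariant framed presheaves, and it is precisely this cohomological vanishing on $\Gm_K$ that your proposal needs but does not supply.
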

\begin{proof}
Using \cite[Lemma 6.1.3]{morel2005stable}, it suffices to prove that the map induces an equivalence on sections over fields.
Thus let $K/k$ be a field extension.
By Corollary \ref{cor:strictly-invariant}, $L_\mot = L_\Nis L_{\A^1}$.
Note that $\Omega_\Gm$ commutes with $L_{\A^1}$ (see e.g. \cite[Lemma 4]{bachmann-grassmann}) and fields are stalks for the Nisnevich topology; hence it is enough to show that \[ (\Omega_\Gm L_{\A^1} \scr X)(K) \to (\Omega_\Gm L_\Nis L_\A^1 \scr X)(K) \] is an equivalence.
By another induction on the Postnikv tower / descent spectral sequence, we reduce to showing that for $F \in \PSh_\Sigma(\Cor^\fr(k), \Ab)$ which is $\A^1$-invariant, one has \[ H^n_\Nis(\Gm_K, F) = \begin{cases} F(\Gm_K) & n = 0 \\ 0 & \text{else} \end{cases}. \]
The first case is immediate from Theorem \ref{thm:strictly-invariant}(1).
Now assume that $n \geq 1$. Theorem \ref{thm:strictly-invariant}(3) asserts that $a_\Nis F$ is strictly $\A^1$-invariant. Since $\P^1 \wequi \Sigma \Gm \in \Spc(k)$ we find that $H^n_\Nis(\Gm_K, F) = H^{n+1}_\Nis(\P^1_K, F)$.
The result thus follows from the fact that $\P^1$ has Nisnevich cohomological dimension one \cite[Proposition 3.1.8]{A1-homotopy-theory} and thus its Nisnevich cohomology vanishes whenever $n+1 \geq 2$.
\end{proof}

The remainder of this section is devoted to proving Theorem \ref{thm:strictly-invariant}.
\subsubsection*{Notation and conventions}
From now on, all cohomology will be Nisnevich cohomology, i.e. $H^* := H^*_\Nis$.

Given a scheme $X$ and a point $x \in X$, we write $X_x$ for the local scheme $\Spec(\scr O_{X,x})$ and $X_x^h$ for the henselian local scheme $\Spec(\scr O_{X,x}^h)$.
If $S \subset X$ is a finite set of points, we write $X_S$ for the semilocalization of $X$ in $S$ (see \S\ref{subsec:semilocal} for more about semilocalizations).

Recall that a scheme is called \emph{essentially smooth affine over $k$} if it can be written as a cofiltered limit of smooth affine $k$-schemes, with étale transition maps.
Observe that essentially smooth affine schemes are affine of finite dimension and have local rings which are integral domains, but need not be noetherian (though we will not use non-noetherian schemes in any relevant way).

\subsection{A formalism for strict $\A^1$-invariance} \label{subsec:formalism}
We shall prove Theorem \ref{thm:strictly-invariant} following the strategy explained in \cite{druzhinin2018cohomological,druzhinin2017strictly}.

Throughout we fix a field $k$.
Consider an abelian presheaf $F \in \PSh_\Sigma(\SmAff_k, \Ab)$.
As usual, we extend $F$ to essentially smooth affine schemes by continuity: if $X = \lim_i X_i$ where each $X_i$ is smooth affine (and the transition maps are étale, so that $X$ is essentially smooth), then $F(X) := \colim_i F(X_i)$.
We isolate the following four properties which $F$ may satisfy.

\begin{definition}[IA]
We say that $F$ satisfies \emph{injectivity on the affine line} (short \emph{IA}) if the following holds.
For any finitely generated, separable field extension $K/k$ (automatically essentially smooth) and open subschemes $\emptyset \ne V_1 \subset V_2 \subset \A^1_K$ (automatically affine), the restriction $F(V_2) \to F(V_1)$ is injective.
\end{definition}

\begin{definition}[EA]
We say that $F$ satisfies \emph{excision on the relative affine line} (short \emph{EA}) if the following holds.
For any essentially smooth affine scheme $U$ and affine open subscheme $V \subset \A^1_U$ containing $0_U$, restriction induces an isomorphism \[ F(\A^1_U \setminus 0_U)/F(\A^1_U) \wequi F(V \setminus 0_U)/F(V). \]
(Note that $\A^1_U \setminus 0_U$ and $V \setminus 0_U$ are indeed affine.)

Furthermore we require that if $K/k$ is a finitely generated, separable field extension, $z \in \A^1_K$ a closed point, $V \subset \A^1_K$ an open neighborhood of $z$, then \[ F(\A^1_K \setminus z)/F(\A^1_K) \wequi F(V \setminus z)/F(V). \]
\end{definition}

\begin{definition}[IL]
We say that $F$ satisfies \emph{injectivity for henselian local schemes} (short \emph{IL}) if the following holds.
For any essentially smooth, henselian local scheme $U$ with generic point $\eta$, the restriction $F(U) \to F(\eta)$ is injective.
\end{definition}

\begin{definition}[EE]\NB{awkward formulation?}
We say that $F$ satisfies \emph{étale excision} (short \emph{EE}) if the following holds.
Let $\pi: X' \to X$ local morphism of local schemes which can be obtained as a cofiltered limit of étale morphisms of smooth $k$-schemes (with étale transition maps).
Let $Z \subset X$ be a principal closed subscheme such that $\pi^{-1}(Z) \to Z$ is an isomorphism.
Then the canonical map \[ F(X \setminus Z)/F(X) \to F(X' \setminus \pi^{-1}(Z))/F(X') \] is an isomorphism.
\end{definition}

\begin{remark}
Observe that if $X$ is affine and $Z \subset X$ is a principal closed subscheme, then $X \setminus Z$ is (a principal open) affine.
The above axioms are often stated in a more general form without affineness or principality assumptions.
As we will see in this section, our weak form of the axioms is enough to deduce strict $\A^1$-invariance.
\end{remark}

We shall also use the notion of contraction.
\begin{definition}
Let $F$ be a presheaf.
We denote by $F_{-1}$ the presheaf $X \mapsto F(X \times \Gm)/F(X)$, and by $F_{-n}$ the $n$-fold iterate of this construction.
We also write $F_{-1}'$ for the presheaf $X \mapsto F(X \times \Gm)/F(X \times \A^1)$.
\end{definition}
\begin{remark}\label{rmk:contraction}
$F_{-1}'$ is the definition of contraction in \cite[\S23]{lecture-notes-mot-cohom}.
This yields the most natural result in Lemma \ref{lemm:principal-imm}.
For $\A^1$-invariant presheaves, the two notions coincide.
For non-$\A^1$-invariant presheaves, the definition of $F_{-1}$ we gave seems more standard.
\end{remark}

The main result of this section is as follows.
\begin{theorem} \label{thm:strict-A1-formal}
Let $k$ be a perfect field.
Let $\scr C$ be a collection of abelian presheaves on $\SmAff_k$ which is closed under $F \mapsto F_{-1}$ and $F \mapsto H^i(\ph, F)$.
Assume that whenever $F \in \scr C$ is an $\A^1$-invariant presheaf, then it satisfies IA, EA, IL and EE.

Let $F \in \scr C$ be $\A^1$-invariant.
Then for every essentially smooth (not necessarily affine) $k$-scheme $X$ we have \[ H^i(X \times \A^1, F) \wequi H^i(X, F). \]
\end{theorem}
Note that since the category of Nisnevich sheaves on $\Sm_k$ is the same as the category of Nisnevich sheaves on $\SmAff_k$, $H^i(X, F)$ makes sense even if $X$ is not affine. The next lemma states that Nisnevich sheafification of a presheaf with IA, EA, IL and EE does not change its values on opens of (relative) $\A^1$'s.

\begin{lemma} \label{lemm:strict-formal-field}
Let $K/k$ be a finitely generated, separable field extension and $F$ a presheaf satisfying IA, EA, IL and EE.
Let $U \subset \A^1_K$ be open.
Then $F(U) \wequi (a_\Nis F)(U)$ and $H^i(U, F) = 0$ for $i > 0$.
\end{lemma}
\begin{proof}
Let $X = \A^1_K$.

We first establish the following claim: $(*)$ if $U \subset \A^1_K$ is open, $z_1, \dots, z_n \in U$ are distinct closed points, then \[ F(U \setminus \{z_1, \dots, z_n\})/F(U) \wequi \bigoplus_{i=1}^n F(U_{z_i}^h \setminus z_i)/F(U_{z_i}^h). \]
If $n=1$, this follows by combining EA and EE.
Now let $n>1$, and assume the claim proved for $n-1$.
Combining IA and the case $n=1$, we have a short exact sequence \[ 0 \to F(U \setminus \{z_1, \dots, z_{n-1}\})/F(U) \to F(U \setminus \{z_1, \dots, z_n\})/F(U) \to F(U_{z_n}^h \setminus z_n)/F(U_{z_n}^h) \to 0. \]
By induction, the first term in the sequence isomorphic to $\bigoplus_{i=1}^{n-1} F(U_{z_i}^h \setminus z_n)/F(U_{z_i}^h)$, and we can thus split the sequence.
This proves the claim.

Consider the following sequence of sheaves on $X_\Nis$ \[ 0 \to a_\Nis F \to \bigoplus_{\eta \in U^{(0)}} F(\eta) \to \bigoplus_{z \in U^{(1)}} F(U_z^h \setminus z)/F(U_z^h) \to 0, \] where $U \to X$ is an arbitrary affine étale scheme.
Observe that the second and third terms are skyscraper sheaves, and so are acyclic (see e.g. \cite[proof of Lemma 5.42]{A1-alg-top}).
We argue that this sequence is exact.
For this we need only consider the case where $U = \eta$ (a generic point of some étale $X$-scheme), and the case where $U$ is henselian local of dimension $1$, so in particular has only two points.
Both sequences are exact; the only non-trivial point is injectivity of $F(U) \to F(\eta)$ which is IL.

It follows that we may compute $H^i(U, F)$ using the above resolution; in particular $H^i=0$ for $i>1$.
Let $U \subset X$.
We first compute $H^0(U,F)$: it consists of those elements $a \in F(\eta)$ (where $\eta$ is the generic point of $U$) such that for every closed point $z \in U$, $a$ is in the image of $F(X_z^h) \to F(X_z^h \setminus z)$.
Let $a$ be such an element.
Then there exists $\emptyset \ne V \subset U$ and $a' \in F(V)$ such that $a = a'|_\eta$.
Let $z \in U \setminus V$ and put $V' = V \cup \{z\}$.
Note that $V' \subset U$ is open since its complement consists of finitely many (closed) points.
By $(*)$ with $n=1$ we have $F(V)/F(V') \wequi F((V'_z)^h \setminus z)/F((V'_z)^h)$.
The image of $a'$ in the right hand group vanishes by assumption, hence it vanishes in the left hand group.
In other words there exists $a'' \in F(V')$ extending $a'$.
Repeating this argument finitely many times we conclude that $F(U) \to H^0(U, F)$ is surjective.
The map is injective by IA, and hence an isomorphism.

It remains to prove that $H^1(U,F) = 0$.
In other words, given distinct closed points $z_1, \dots, z_n \in U$ we must prove that $F(\eta) \to \bigoplus_{i=1}^n F(U_{z_i}^h \setminus z_i)/F(U_{z_i}^h)$ is surjective.
This follows from $(*)$, since it identifies the right hand side with a quotient of $F(U \setminus \{z_1, \dots, z_n\})$.
\end{proof}

The following is essentially \cite[Theorem 23.12]{lecture-notes-mot-cohom}.
\begin{lemma} \label{lemm:principal-imm}
Let $X$ be essentially smooth and affine, $i: Z \hookrightarrow X$ a principal, essentially smooth closed subscheme, and $F$ satisfy EA, EE.
Write $j: U = X \setminus Z \to X$ for the complementary open immersion.
Suppose that we are given étale neighborhoods $(X, Z) \leftarrow (\Omega, Z) \to (\A^1_Z, Z)$.

There is a short exact sequence of Nisnevich sheaves on $X$ \[ a_\Nis F \to a_\Nis j_* j^* F \to a_\Nis i_* F_{-1}' \to 0. \]If $F$ satisfies IL, then $a_\Nis F \to a_\Nis j_* j^* F$ is injective.
\end{lemma}
\begin{proof}
We shall use without further comment the fact that $Z_\Nis$ has a conservative family of stalk functors of the form $F \mapsto F(U_x^h \times_X Z)$, where $U \to X$ is étale and $x \in U$.\NB{ref?}

Denote by $F_{(X,Z)}$ the sheaf $a_\Nis i^*(j_*j^*F/F)$ on $Z_\Nis$.
By adjunction we obtain a map $a_\Nis(j_*j^*F/F) \to i_* F_{(X,Z)}$.
Checking on stalks, we see that this is an equivalence.
Since $i_*$ commutes with $a_\Nis$\NB{ref?}, our task is to prove that $F_{(X,Z)} \wequi a_\Nis F'_{-1}$.
If $(X', Z) \to (X, Z)$ is an étale neighborhood, there is an induced map $F_{(X,Z)} \to F_{(X',Z')}$; again checking on stalks we see that this is an equivalence.
Using the étale neighborhoods provided by the hypothesis, we may thus assume that $X=\A^1_Z$.
For $U \to Z$ étale, $\A^1_U \to \A^1_Z$ is étale, and $F'_{-1}(U) = (j_*j^*F/F)(\A^1_U)$; this induces a map $F_{-1}' \to i^*j_*j^*F/F$.
We shall prove that this is an equivalence.

We check this on stalks.
Let $Z' \to Z$ be étale and $x \in Z'$.
We shall consider the stalk at $x$; to simplify notation replace $Z$ by $Z'$.
Consider the following commutative diagram
\begin{equation*}
\begin{CD}
(\A^1_Z)_x^h @>>> (\A^1_{Z_x^h})_x @>>> \A^1_{Z_x^h} @>>> \A^1_Z \\
@AAA                 @AAA                   @AAA            @AAA \\
B        @>>>   A                  @>>>  Z_x^h       @>>> Z.
\end{CD}
\end{equation*}
The right-most vertical map is the canonical inclusion, and all squares are defined to be cartesian.
Note that $A$ is local and a localization of $Z_x^h$ containing $x$; thus $A \wequi Z_x^h$.
Similarly $B$ is henselian local, pro-étale over $A$ and contains $x$, thus $B \wequi Z_x^h$ also.
Now we get \begin{gather*} [i^*(j_*j^* F/F)](Z_x^h) \wequi (j_*j^* F/F)((\A^1_Z)_x^h) \wequi F((\A^1_Z)_x^h \setminus Z_x^h)/F((\A^1_Z)_x^h) \stackrel{EE}{\wequi} F((\A^1_{Z_x^h})_x \setminus Z_x^h)/F((\A^1_{Z_x^h})_x) \\ \stackrel{EA}{\wequi} F(\A^1_{Z_x^h} \setminus Z_x^h)/F(\A^1_{Z_x^h}) \wequi F'_{-1}(Z_x^h). \end{gather*}

For the last part, it suffices to observe that if $X$ is henselian local with generic point $\eta$ and $U \subset X$ is non-empty, then \[ F(X) \wequi (a_\Nis F)(X) \to (a_\Nis F)(U) \to (a_\Nis F)(\eta) \wequi F(\eta) \] is injective by $IL$, and hence so is $(a_\Nis F)(X) \to (a_\Nis F)(U)$.
\end{proof}

\begin{remark} \label{rmk:principal-imm}
Let $X \in \Sm_k$, $Z \subset X$ a smooth, principal closed subscheme.
Then locally on $X$, étale neighborhoods as required in Lemma \ref{lemm:principal-imm} exist.
See e.g. \cite[\S5.9]{deglise-regular-base}. 
\end{remark}

\begin{lemma} \label{lemm:contr-inj}
Suppose that $F$ satisfies IA and $F_{-1}$ satisfies IL.
Then the canonical map $a_\Nis F_{-1} \to (a_\Nis F)_{-1}$ is an injection.
\end{lemma}
\begin{proof}
Using IL, it suffices to prove that $F_{-1}(K) \to (a_\Nis F)_{-1}(K)$ is injective.
Hence we need to prove that $F(\A^1_K \setminus 0) \to (a_\Nis F)(\A^1_K \setminus 0)$ is injective.
This follows from IA.
\end{proof}

\begin{proof}[Proof of Theorem \ref{thm:strict-A1-formal}.]
To begin with, note that if $F$ is $\A^1$-invariant then so is $F_{-n}$.
We shall use this freely in the sequel.

As a first step, we shall prove that if $F \in \scr C$ is $\A^1$-invariant then so is $a_\Nis F$.
Since $F$ is $\A^1$-invariant it satisfies IA, EA, IL and EE (by assumption on $\scr C$) and so Lemma \ref{lemm:strict-formal-field} applies.
Let $X \in \SmAff_k$.
We must prove that $H^0(X \times \A^1, F) \to H^0(X, F)$ is injective.
Consider the diagram
\begin{equation*}
\begin{CD}
H^0(X,F) @>>> \prod_{\eta \in X^{(0)}} H^0(\eta, F) \\
@AAA                      @AAA \\
H^0(\A^1_X, F) @>>> \prod_{\eta \in X^{(0)}} H^0(\A^1_\eta, F) \\
@VVV                               @VVV \\
\prod_{x \in U} H^0(U_x^h, F) @>>> \prod_{x \in U} H^0(\eta_x, F),
\end{CD}
\end{equation*}
where the product is over points of $(\A^1_X)_\Nis$ and $\eta_x$ is the generic point of $U_x^h$.
This lies over a point of $\A^1_\eta$, so the bottom right hand map is defined and the diagram commutes.
The bottom left hand map is injective (since $H^0(-,F)$ is Nisnevich-separated) and the bottom horizontal map is injective by IL; hence the middle horizontal map is injective.
Consequently the top left hand map is injective as soon as the top right hand map is.
This reduces the claim to the case $X=\eta$, which holds by Lemma \ref{lemm:strict-formal-field}.

Next we will prove by induction on $n$ that if $F \in \scr C$ is $\A^1$-invariant, then $H^n(\ph, F)$ is also $\A^1$-invariant.
The case $n=0$ has been dealt with.
In particular we may assume that $F$ is a sheaf.
Let $n>0$ and suppose that all smaller $n$ have been established.

\emph{Step (1).} If $j: U \to X$ is a principal open immersion of smooth $k$-schemes, with smooth closed complement $Z \hookrightarrow X$, then we claim 
\[
R^i j_* F = 0 \qquad \forall 0 < i < n.
\]
For this consider the presheaf $G=H^i(\ph, F)$, which we know is $\A^1$-invariant by induction.
The problem is local on $X$, so by Remark \ref{rmk:principal-imm} we may apply Lemma \ref{lemm:principal-imm} to $G$ and obtain an exact sequence (using that $G_{-1} \wequi G'_{-1}$ by Remark \ref{rmk:contraction}) \[ 0 \to a_\Nis G \to a_\Nis j_* j^* G \to i_* a_\Nis G_{-1} \to 0. \]
We have $a_\Nis G = 0$, $a_\Nis j_* j^* G = R^i j_* F$. By Lemma \ref{lemm:contr-inj} we have \[ a_\Nis G_{-1} \hookrightarrow (a_\Nis G)_{-1} = 0, \] which proves the claim.

\emph{Step (2).} If $X$ is an essentially smooth scheme, $U \subset X$ a principal open subscheme with essentially smooth closed complement, then we claim that $H^n(\A^1_X, j_*j^* F) \to H^n(\A^1_U, F)$ is injective.
To prove this, we may assume $X$ smooth.\todo{details?}
Consider the cofiber sequence $j_* j^* F \to Rj_* j^* F \to C$.
By step (1), $C$ has cohomology sheaves concentrated in degree $\ge n$.
Hence in the long exact sequence \[ H^{n-1}(\A^1_X, C) \to H^{n}(\A^1_X, j_* j^* F) \to H^n(\A^1_X, Rj_* j^* F) \] the first term vanishes, so the second map is injective.
The result follows since the last term identifies with $H^n(\A^1_U, F)$ by the previous step.

\emph{Step (3).} If $X$ is an essentially smooth scheme, $U \subset X$ a principal open subscheme with essentially smooth closed complement $Z$, $z \in Z$, then $H^n(\A^1_{X_z^h}, F) \to H^n(\A^1_{X_z^h}, j_*j^* F)$ is injective.
Again we may assume that $X$ is smooth.
The problem being local around $z$, via Remark \ref{rmk:principal-imm} we may assume given étale neighborhoods $(X, Z) \leftarrow (\Omega, Z) \to (\A^1_Z, Z)$.
Taking the product with $\A^1$, we may thus apply Lemma \ref{lemm:principal-imm} to $\A^1_Z \hookrightarrow \A^1_X$ and get an exact sequence $0 \to F \to j_*j^*F \to i_* F_{-1} \to 0$ on $(\A^1_X)_{\Nis}$.
Taking $H^1(\A^1_{X_z^h}, \ph)$ yields a long exact sequence, part of which reads \[ H^{n-1}(\A^1_{X_z^h}, j_*j^* F) \xrightarrow{a} H^{n-1}(\A^1_{Z_z^h}, F_{-1}) \to H^n(\A^1_{X_z^h}, F) \to H^n(\A^1_{X_z^h}, j_*j^* F). \]
It is thus enough to prove that $a$ is surjective.
If $n>1$ then $H^{n-1}(\A^1_{Z_z^h}, F_{-1}) \wequi H^{n-1}(Z_z^h, F_{-1}) = 0$, by induction on $n$.
It remains to prove that $H^0(\A^1_{X_z^h}, j_*j^* F) \to H^0(\A^1_{Z_z^h}, F_{-1})$ is surjective, i.e. that the map $F(\A^1_{X_z^h \setminus Z_z^h}) \to F_{-1}(\A^1_{Z_z^h})$ is surjective.
Since $F$ is $\A^1$-invariant, this is just $F(X_z^h \setminus Z_z^h) \to F_{-1}(Z_z^h)$, which is the evaluation of the surjective map of sheaves $j_*j^* F \to F_{-1}$ on $X_z^h$, and hence surjective.

\emph{Conclusion.}
Let $X$ be an essentially smooth scheme.
Write $f: X \to \A^1_X$ for the inclusion at $0$.
We seek to prove that $F \to Rf_*F$ induces an equivalence on $H^n$, and we already know this for $H^i$, $i<n$.
We shall prove this by induction on $d=\dim X$.
Let $C$ be the cofiber of $F \to Rf_* F$.
Then $Rf_* F \wequi F \oplus C$ (via $p: \A^1_X \to X$) and so we must prove that $C$ has cohomology concentrated in degrees $>n$; this may be checked stalkwise.
In other words we must prove that if $X$ is an essentially smooth, henselian local scheme of dimension $d$, then $H^n(\A^1_X, F) \to H^n(X, F)$ is an isomorphism; equivalently we may prove that it is injective.
If $d=0$, $X$ is the spectrum of a field, and we are reduced to Lemma \ref{lemm:strict-formal-field}.
Thus $d > 0$ and we can find a principal open $U \subset X$ with essentially smooth closed complement $Z$
(here we use that $k$ is perfect).
Consider the commutative diagram
\begin{equation*}
\begin{CD}
H^n(\A^1_X, F) @>{(2)}>> H^n(\A^1_X, j_*j^* F) @>{(1)}>> H^n(\A^1_U, F) \\
@VVV                  @VVV                @VVV    \\
H^n(X, F) @>>> H^n(X, j_*j^* F) @>>> H^n(U, F).
\end{CD}
\end{equation*}
The maps in the top composite are injective, by the steps indicated above them.
The right hand vertical map is injective by induction on $d$.
It follows that the left hand vertical map is injective, as desired.
\end{proof}

\begin{remark} \label{rmk:H0-imperf}
The proof shows that the perfectness assumption on $k$ is only needed to ensure $\A^1$-invariance of $H^i(\ph, F)$ for $i>0$.
\end{remark}

\subsection{Framed pretheories}
\newcommand{\tw}{\mathrm{tw}}
\def\div{\mathrm{div}}
As in the last section, we consider an abelian presheaf $F \in \PSh_\Sigma(\SmAff_k, \Ab)$, extended by continuity to essentially smooth affine schemes. The next definition is the framed analog of the notion of pretheories introduced by Voevodsky, see  \cite{voevodsky2000cohomological}.

\begin{definition} \label{def:framed-preth}
By a structure of \emph{framed pretheory} on $F$ we mean the following data: for every $X \in \SmAff_k$, $C \to X$ a smooth relative curve, $\mu: \Omega^1_{C/X} \wequi \scr O_C$, $f \in \scr O(C)$ and decomposition $Z(f) = Z \amalg Z'$ with $Z$ finite over $X$, we are given \[ \tr(f)^\mu_Z: F(C) \to F(X). \]
The transfers must satisfy the following properties:
\begin{enumerate}
\item If $Z = Z_1 \amalg Z_2$, then $\tr(f)^\mu_Z = \tr(f)^\mu_{Z_1} + \tr(f)^\mu_{Z_2}$.
\item If $p: (C', Z') \to (C, Z)$ is an étale neighborhood, then $\tr(f)^\mu_Z = \tr(f \circ p)^{p^* \mu}_{Z'} \circ p^*$.
\item Given $\alpha: X' \to X$ let $\alpha': X' \times_X C \to C$ be the induced map.
  Then 
  \[\alpha^* \circ \tr(f)^\mu_Z = \tr(f \circ \alpha')^{\alpha'^* \mu}_{\alpha'^{-1}(Z)} \circ \alpha'^*.
  \]
\item Suppose that $Z \to X$ is an isomorphism with inverse $i$.
  Then \[ \tw(f)^\mu_Z := \tr(f)^\mu_Z \circ (C \to X)^*: F(X) \to F(X) \] is an isomorphism and $\tr(f)^\mu_Z = \tw(f)^\mu_Z \circ i^*$.
\item Fix a section $i: X \to C$ with image $Z$ and a trivialization $\mu$.
  Assume that $X$ is semilocal.
  Then, there exists $\lambda \in H^0(Z, C_{Z/C})$ such that for any $f \in I_Z(C)$ with $df = \lambda$\todo{notation?} we have $\tw(f)^\mu_Z = \id$.
\end{enumerate}
\end{definition}
Note that condition (3) implies that the transfers on $F$ extend to essentially smooth schemes, so (5) makes sense.

\begin{example} \label{ex:framed}
Let $F \in \PSh_\Sigma(\Cor^\fr_k, \Ab)$.
Then $F$ admits a structure of framed pretheory as follows.
Given $(C\to X,\mu,f,Z)$, then $Z \to X$ is syntomic by \cite[Proposition 2.1.16]{EHKSY1}. Furthermore we define the following $K$-theoretic trivialization of the cotangent complex $L_{Z/X}$ \[ \tau: L_{Z/X} \wequi L_{Z/C} + L_{C/X}|_Z \stackrel{f,\mu}{\wequi} -\scr O + \scr O \wequi 0 \in K(Z)\footnote{Note that $L_{Z/C} \simeq I/I^2[1]$ where $I$ is the ideal defining $Z$. By hypothesis, $I/I^2$ is an invertible $\scr O_Z$-module and the class of $f$ in $I/I^2$ provides a trivialization of this line bundle}. \]
Consequently we obtain a framed correspondence $X \xleftarrow{\tau} Z \to C$, pullback along which defines $\tr(f)^\mu_Z$.
All axioms are easily verified.
(For the last axiom, one may argue as follows.
Since $Z$ is semilocal, $C_{Z/C}$ admits a non-vanishing section $\lambda'$.
Together $\lambda', \mu$ determine a trivialization of $L_{Z/X} \wequi L_{X/X} = 0$, whence a class in $K_1(Z)$.
Since $Z$ is semilocal, $K_1(Z) \wequi \scr O^\times(Z)$ \cite[Lemma III.1.4]{weibel-k-book}, and hence replacing $\lambda'$ by $\lambda := u\lambda'$ for well-chosen $u$ ensures that the trivialization is the canonical one and thus $\tw(f)$ is the identity morphism.)
\end{example}

Suppose given $(C \to X, f, \mu, Z)$ as in Definition \ref{def:framed-preth}, a map $g: C \to Y \in \SmAff_k$, $U \subset X$ and $U' \subset Y$ open.
Assume that $g^{-1}(Y \setminus U') \cap Z$ lies over $X \setminus U$.
Write $C_U \subset C$, $Z_U \subset Z$ for the canonical open subschemes.
Then $C_U \cap g^{-1}(U') \to C_U$ is an étale (in fact open) neighborhood of $Z_U$.
We may form the following diagram
\begin{equation*}
\begin{tikzcd}
F(Y) \ar[rr,"g^*"] \ar[d] && F(C) \ar[d] \ar[r, "\tr(f)^\mu_Z"] & F(X) \ar[d] \\
F(U') \ar[r, "g^*"] & F(C_U \cap g^{-1}(U')) \ar[rr, "\tr", bend right=20] & F(C_U) \ar[l] \ar[r, "\tr"] & F(U).
\end{tikzcd}
\end{equation*}
The maps labelled $\tr$ are the evident transfers, and the unlabelled maps are pullbacks along evident inclusions.
The diagram commutes by properties (2) and (3).
\begin{construction} \label{cons:pair-pullback}
Taking vertical cokernels in the outer rectangle of the above diagram, we obtain a map \[ F(U')/F(Y) \to F(U)/F(X). \]
\end{construction}

\begin{definition}
Let $X, Y$ be essentially smooth over $k$, $X \leftarrow C \xrightarrow{g} Y$ a span.
We call data \[ \Phi = (X \leftarrow C \rightarrow Y,f,\mu,Z): X \rightsquigarrow Y \] such that $(C \to X,f,\mu,Z)$ satisfies the assumptions of Definition \ref{def:framed-preth}, a \emph{curve correspondence} from $X$ to $Y$ and put \[ \Phi^* = \tr(f)^\mu_Z \circ g^*: F(Y) \to F(X). \]
If we are further given $U \subset X, U' \subset Y$ such that $g^{-1}(Y \setminus U') \cap Z$ lies over $X \setminus U$, we call the data a \emph{curve correspondences of pairs}, denote it by \[ \Phi = (X \leftarrow C \rightarrow Y,f,\mu,Z): (X,U) \rightsquigarrow (Y,U') \] and write \[ \Phi^*: F(U')/F(Y) \to F(U)/F(X) \] for the map of Construction \ref{cons:pair-pullback}.
\end{definition}

\begin{lemma} \label{lemm:almost-null}
Let $\Phi = (X \leftarrow C \xrightarrow{g} Y,f,\mu,Z): (X,U) \rightsquigarrow (Y,U')$ be a curve correspondence of pairs.
Suppose that $Z \subset g^{-1}(U')$.
Then $\Phi^* = 0$.
\end{lemma}
\begin{proof}
By axiom (2) we can replace $C$ by $g^{-1}(U')$.
Since now $\Phi^*$ factors through $F(U')/F(U') = 0$, the result follows.
\end{proof}

\subsection{Injectivity on the relative affine line}
In this section we establish $IA$ for $\A^1$-invariant framed pretheories.

\begin{lemma} \label{lemm:IA}
Let $U \in \SmAff_k$, $V_1 \subset V_2 \subset \A^1_U$ affine and open.
Assume that $\A^1_U \setminus V_2$ and $V_2 \setminus V_1$ are finite over $U$.
Then there exist curve correspondences $\Phi, \Phi^-: V_2 \rightsquigarrow V_1$ and $\Theta: V_2 \times \A^1 \rightsquigarrow V_2$ such that in any framed pretheory, $i_0^*\Theta^* = \Phi^* i^*$ and $i_1^* \Theta^* - (\Phi^-)^*i^*$ is invertible.
Here $i$ denotes the inclusion $V_1 \to V_2$ and $i_s: V_2 \to V_2 \times \A^1$ is the inclusion at $s$.
\end{lemma}
\begin{proof}
We begin by constructing certain functions $f, g \in k[\A^1_U \times_U V_2]$.
We shall denote the first coordinate by $y$ and the second by $x$.
We will arrange that $f,g$ are, respectively, monic in $y$ of degrees $n$ and $n-1$ (for some $n$ sufficiently large).
Moreover, we shall ensure that
\begin{gather*}
f|_{(\A^1_U \setminus V_1) \times_{U} V_2} = 1 \\
g|_{(\A^1_U \setminus V_2) \times_U V_2} = (y-x)^{-1} \\
g|_{(V_2 \setminus V_1) \times_U V_2} = 1 \quad g|_{Z(y-x)} = 1. \\
\end{gather*}
To do this, note that each of the subschemes we are restricting to is finite over $V_2$, and apply Lemma \ref{lemm:monic-res} below.

Let $h \in k[\A^1 \times V_2 \times \A^1]$ be given by \[ h = (1-t)f + t(y-x)g; \] here $t$ denotes the third coordinate.
Note that $h$ is monic in $y$.
Define
\begin{gather*}
  \Phi = (V_2 \leftarrow V_1 \times_U V_2 \to V_1, f, dy, Z(f)): V_2 \rightsquigarrow V_1 \\
  \Theta = (V_2 \times \A^1 \xleftarrow{pr_2} V_2 \times_U V_2 \times \A^1 \xrightarrow{pr_1} V_2, h, dy, Z(h)): V_2 \times \A^1 \rightsquigarrow V_2.
\end{gather*}
Note that here by $f$ we implicitly denote its restriction to $V_1 \times_U V_2$, and similarly for $h$.
Since $h$ is monic in $y$, $Z(h) \subset \A^1 \times V_2 \times \A^1$ is finite over $V_2 \times \A^1$.
By construction, $h$ is constantly equal to $1$ on $(\A^1_U \setminus V_2) \times_U V_2 \times \A^1$.
Thus $Z(h)$ is completely contained in $V_2 \times_U V_2 \times \A^1$, and so $\Theta$ is well-defined.
A similar argument applies to $\Phi$.

Since $h|_{t = 0} = f$ we find (using Definition \ref{def:framed-preth}(3)) that \[ i_0^* \Theta^* = \Phi^* i^*. \]
Since $h|_{t = 1} = (y-x)g$ has vanishing locus splitting into two disjoint pieces, we find (using Definition \ref{def:framed-preth}(1)) that \[ i_1^* \Theta^* = (V_2 \xleftarrow{pr_2} V_2 \times_U V_2 \xrightarrow{pr_1} V_2, (y-x)g, dy, Z(y-x))^* + (V_2 \times_U V_2, (y-x)g, dy, Z(g))^*. \]
The first term is invertible by Definition \ref{def:framed-preth}(4).
Note that $Z(g) \subset V_1 \times_U V_2$.
Thus we can define \[ \Phi^- = (V_2 \leftarrow V_1 \times_U V_2 \to V_1, (y-x)g, dy, Z(g)): V_2 \rightsquigarrow V_1, \] concluding the proof.
\NB{$Z((y-x)g|_{V_1 \times_U V_2}$ would not be finite!}
\end{proof}

\begin{lemma} \label{lemm:monic-res}
Let $U$ be an affine scheme and $Z \subset \A^1_U$ a closed subscheme which is finite over $U$.
Let $\bar f \in \scr O(Z)$.
Then for $n$ sufficiently large there exists a monic $f \in \scr O(\A^1_U)$ of degree $n$ with $f|_Z = \bar f$.
\end{lemma}
\begin{proof}
Let $U = \Spec(A)$, $Z = \Spec(A[T]/I)$.
Since $Z$ is finite, there exist $g_1, \dots, g_r \in A[T]$ whose images generate $A[T]/I$ as an $A$-module.
Let $n$ be larger than the maximum of the degrees of the $g_i$.
We claim that $f$ as desired can be found for such $n$.
Indeed note that \emph{any} $\bar h \in A[T]/I$ admits a lift $h \in A[T]$ of degree $<n$; in fact we can choose the lift to be an $A$-linear combination of the $g_i$.
Now let $f_1$ be an arbitrary lift of $\bar f - T^n$ of degree $<n$, and put $f= T^n + f_1$.
\end{proof}

\begin{theorem} \label{thm:IA}
Let $U$ be essentially smooth over $k$, $V_1 \subset V_2 \subset \A^1_U$ affine and open.
Assume that $\A^1_U \setminus V_2$ and $V_2 \setminus V_1$ are finite over $U$.
Let $F$ be an $\A^1$-invariant framed pretheory.

Then $F(V_2) \to F(V_1)$ is injective.

In particular $F$ satisfies IA.
\end{theorem}
\begin{proof}
All our open immersions are affine, hence quasi-compact \cite[Tag 01K4]{stacks-project} and so of finite presentation \cite[Tag 01TU]{stacks-project}.
It follows that when writing $U = \lim_i U_i$ as a cofiltered limit of smooth affine schemes, we may assume given $V_1' \subset V_2' \subset \A^1_{U_0}$ affine open with base change $V_i$, such that $\A^1_{U_0} \setminus V_2'$ and $V_2' \setminus V_1'$ are finite over $U_0$ \cite[Tags 01ZM, 01ZO and 0EUU]{stacks-project}.
By continuity of $F$, we may thus assume $U \in \SmAff_k$.
Now let $x \in F(V_2)$ with $i^*(x) = 0$.
Using $\A^1$-invariance we find that, in the notation of Lemma \ref{lemm:IA} \[ 0 = \Phi^*i^*(x) - (\Phi^-)^*i^*(x) = i_0^*\Theta^*(x) - (\Phi^-)^*i^*(x) = i_1^*\Theta^*(x) - (\Phi^-)^*i^*(x). \]
But $i_1^*\Theta^* - (\Phi^-)^* i^*$ is invertible, so $x=0$.
\end{proof}

\subsection{Geometric preliminaries}
The proofs of the other axioms are similar to the one for IA, but significantly more elaborate.
We collect here some results from algebraic geometry that we shall use.

\subsubsection{Serre's theorem} \label{subsec:serre}
Let $A$ be a Noetherian ring and $X \to \Spec(A)$ a projective $A$-scheme with ample line bundle $\scr O(1)$.
Then for any coherent sheaf $\scr F$ on $A$, $i > 0$ and $n$ sufficiently large, $H^i(X, \scr F(n)) = 0$ \cite[Tag 0B5T(4)]{stacks-project}.
We shall often use the following immediate consequence: if $\scr F \to \scr G$ is a surjection of coherent sheaves, then for $n$ sufficiently large, $H^0(X, \scr F(n)) \to H^0(X, \scr G(n))$ is surjective. (Indeed this follows from vanishing of $H^1(X, \ker(\scr F \to \scr G)(n))$.)
We mainly use this as follows: if $Z \subset X$ is closed, then $H^0(X, \scr F(n)) \to H^0(Z, \scr F(n))$ is surjective.
This is deduced by taking $\scr G = i_*i^* \scr F$, where $i: Z \to X$ is the closed immersion.

\subsubsection{Semilocal schemes} \label{subsec:semilocal}
We call a scheme \emph{semilocal} if it has only finitely many closed points, \emph{and is affine}.
Note that if $X$ is an affine scheme (or more generally AF-scheme\footnote{This means that every finite set of points is contained in an open affine; for example a scheme quasi-projective over an affine base.}) and $x_1, \dots, x_n \in X$, then \[ X_{x_1, \dots, x_n} := \lim_{U \supset \{x_1, \dots, x_n\}} U \] is a semilocal scheme, where the limit is over all open neighborhoods of the $x_i$.
Indeed every such neighborhood is quasi-affine, and hence contains a smaller \emph{affine} neighborhood of the finitely many points \cite[Corollaire 4.5.4]{EGAII}.
(But note that for example if $X$ is the affine line with the origin doubled and $x_1, x_2 \in X$ are the origins, then $X_{x_1,x_2}$ is not separated and hence not semilocal.\NB{does finitely many closed points + qcs imply semilocal?})

We shall frequently use the following properties of semilocal schemes.
\begin{enumerate}
\item If $X$ is semilocal and $X' \to X$ is finite then $X'$ is semilocal.
\item If $X$ is semilocal and $L$ is a line bundle (or more generally vector bundle of constant rank) on $X$, then $L$ is trivial \cite[Lemma 1.4.4]{bruns1998cohen}.\footnote{Here is a proof. Let $X_0 \subset X$ denote the closed subscheme which is the disjoint union of the closed points of $X$.
  Then $L|_{X_0}$ admits a non-vanishing section, which can ($X$ being affine) be lifted to a section of $L$ on $X$.
  Its vanishing locus avoids $X_0$ and is closed, hence empty.}
\item If $X$ is semilocal and $Y \to X$ is a closed immersion, then $\scr O^\times(X) \to \scr O^\times(Y)$ is surjective.\footnote{Here is a proof. Let $X_0' \subset X$ denote the closed subscheme which is the disjoint union of the closed points of $X \setminus Y$.
  Then $Y \amalg X_0' \to X$ is a closed immersion.
  Now given $a \in \scr O^\times(Y)$, as before we can lift the non-vanishing section $(a, 1) \in \scr O(Y \amalg X_0')$ to a section $\tilde a \in \scr O(X)$, which is non-vanishing.}
\end{enumerate}

\subsubsection{Some general position arguments}
The following is essentially \cite[Lemma 4.1]{druzhinin2017strictly}.
\begin{lemma} \label{lemm:find-Z0}
Let $U$ be a local Noetherian scheme with infinite residue field.
Let $\overline{C'} \to \overline{C}$ be a finite morphism of projective curves over $U$, $\scr Z', D' \subset \overline{C'}$ closed subschemes finite over $U$ with $\scr Z' \cap D' = \emptyset$, $\Delta_Z' \subset \scr Z'$ a principal closed subscheme, $\overline{C'} \setminus D'$ smooth affine over $U$.

Assume that the composite $\Delta_Z' \to \overline{C'} \to \overline{C}$ is a closed immersion.
Then for $n$ sufficiently large there exists a section $\xi \in H^0(\overline{C'}, \scr O(n))$ such that $Z(\xi) \cap \scr Z' = \Delta_Z'$, $Z(\xi) \cap D' = \emptyset$, and $Z(\xi) \to \overline{C'} \to \overline{C}$ is a closed immersion.
\end{lemma}
\begin{proof}
Let us begin with the following preparatory remarks.
Let $X \subset \overline{C'}$ be a closed subscheme which is finite over $\overline{C}$.
Let $T \subset \overline{C}$ be the set of points $t$ such that the geometric fiber $X_{\bar t} \to \bar t$ is not a closed immersion.
Then $T$ is the support of the coherent sheaf $\cok(\scr O_{\overline C} \to \pi_* \scr O_X)$, and hence closed in $\overline{C}$.
Since a proper morphism is a closed immersion if and only if it is unramified and radicial \cite[Tags 01S2 and 04XV]{stacks-project}, we see that $X \to \overline{C}$ is a closed immersion if and only if $T=\emptyset$.
In particular $X \to \overline{C}$ is a closed immersion if and only if its restriction to the closed fiber over $U$ is.

By assumption $\Delta_Z' \subset \scr Z'$ is principal, say cut out by a section $t \in H^0(\scr Z, \scr O)$.
Since $\scr Z'$ is semilocal, $\scr O(1)|_{\scr Z}$ admits a non-vanishing section $d$.
Let $\xi \in H^0(\overline{C'}, \scr O(n))$ be a section such that $\xi|_{\scr Z'} = td^n$.
Let $x \in U$ be the closed point.
For any scheme $S \to U$ denote by $S_x$ the fiber over $x$.
Assume that $Z(\xi) \cap D'_x = \emptyset$ and $Z(\xi)_x \to \overline{C}_x$ is a closed immersion.
Then $Z(\xi) \cap D' = \emptyset$ (being proper over $U$ with empty closed fiber), so $Z(\xi) \to U$ is finite (being proper and affine \cite[Tag 01WN]{stacks-project}).
Hence by the preparatory remarks, $Z(\xi) \to \overline{C}$ is a closed immersion.
That is, such a $\xi$ satisfies the required properties.
Let $M = \scr O_{\overline{C'}_x} \times_{\scr O_{\scr Z'_x}} \scr O_{\scr Z'}$.
Then $\scr O_{\overline{C'}} \to M$ is surjective by \cite[Tag 0C4J]{stacks-project} and so is surjective on $H^0$ after twisting up sufficiently.
Thus it suffices to construct $\xi$ on the closed fiber (satisfying the additional condition that $\xi|_{\scr Z'_x} = td^n$, so that $Z(\xi) \cap \scr Z' = \Delta_Z'$).

We may thus assume that $U$ is the spectrum of an infinite field $k$.
For each point $x \in D'$, pick a trivialization $s_x$ of $\scr O(1)|_x$.
Let $n>0$ and $\Gamma \subset H^0(\overline{C'}, \scr O(n))$ consist of those sections $s$ such that $s|_x = s_x^n$ for all $x \in D'$, and $s|_{\scr Z'} = td^n$.
We must show that there exists (for $n$ sufficiently large) $s \in \Gamma$ such that $Z(s) \to \overline{C}$ is a closed immersion.
Let $T \subset \overline{C} \times \Gamma$ denote the subset of pairs $(s, c)$ such that $Z(s) \to \overline{C}$ is not a closed immersion over the geometric point $\bar c$.
We claim that $\dim T < \dim \Gamma$.
This implies that the complement of the closure of the image of $T \to \Gamma$ is non-empty, and hence has a rational point ($k$ being infinite and $\Gamma$ an affine space).
The preparatory remarks show that any such rational point corresponds to a closed immersion, as desired.

To prove the claim, we may base change to an algebraic closure of $k$, and hence assume $k$ algebraically closed.
Note that for $n$ sufficiently large, for any $x_1, x_2 \in \overline{C'}$ the map \[ H^0(\overline{C'}, \scr O(n)) \to H^0(Z_{x_1,x_2} \cup D' \cup \scr Z', \scr O(n)) \] is surjective, where $Z_{x_1,x_2} = Z(I(x_1)I(x_2))$.
(Indeed there is a closed subscheme of $\P(H^0(\overline{C'}, \scr O(n))) \times \overline{C'} \times \overline{C'}$ witnessing the failure of this condition, so the set of points $(x_1,x_2)$ satisfying the condition is open, but for every $(x_1,x_2)$ and $n$ sufficiently large the condition holds, so we conclude by quasi-compactness of $\overline{C'} \times \overline{C'}$.)
Now let $x_1, x_2 \in \overline{C'} \setminus (D' \cup \scr Z')$.
Then $H^0(Z_{x_1,x_2} \cup D' \cup \scr Z', \scr O(n)) \twoheadrightarrow H^0(Z_{x_1,x_2}, \scr O(n)) \wequi k^2$.
Now let $c \in \overline{C}$ and $s \in \Gamma$.
Then $Z(s) \to \overline{C}$ can only fail to be a closed immersion over $c$ if either there exist $x_1 \ne x_2 \in \overline{C'_c}$ with $s(x_1) = 0 = s(x_2)$, or there exists $x \in \overline{C'_c}$ such that $s$ vanishes to order $\ge 2$ at $x$.\NB{Here we use the smoothness assumption. Vanishing to order $2$ does not exclude closed immersion either...}
By the above remark, either condition is of codimension $2$ on $\Gamma$, provided $\scr Z'_c = \emptyset$.
For the finitely many other points $c$, the only condition is that $Z(s)_c$ contains other points, which is of codimension $1$ on $\Gamma$ by similar arguments.
It follows that all but finitely many fibers of $T \to \overline{C}$ have dimension $\le \dim \Gamma - 2$, and the remaining ones have dimension $\le \dim \Gamma - 1$.
Since $\dim \overline{C} = 1$, this concludes the proof.\NB{ref for dimension arguments?}
\end{proof}

In the rest of this section we will establish a moving lemma.
The core argument uses the method of general projections, which we encapsulate in the following.
\begin{theorem} \label{thm:projn}
Let $k$ be a field, $X \subset \A^N_k$ a closed subscheme of dimension $d$, $Z \subset \A^N$ of dimension $\le d-1$, $S \subset \A^N$ a finite set of closed point (i.e. a subscheme of dimension $0$).
Then for a general linear projection $\pi: \A^N \to \A^d$, the following hold:
\begin{enumerate}
\item $\pi|_X: X \to \A^d$ is finite.
\item If $X$ is smooth, then $\pi|_X$ is étale at all points of $S \cap X$.
\item $\pi^{-1}(\pi(S)) \cap Z \subset S$.
\end{enumerate}
\end{theorem}
\begin{proof}
This is proved for example in \cite[\S3.2]{kai2015moving}.
Specifically (1) is proved just before the beginning of \S3.2.1.
(3) is proved in the case $S = \{s\}$, $s \not\in Z$ in \S3.2.1.
The same proof works for $s \in Z$ (our statement is slightly different than the one in the reference, to allow this situation).
The case of general $S$ follows.
(2) is proved in \S3.2.2.
\end{proof}
\begin{remark}
Note that a dense open subset of affine space over an \emph{infinite} field contains a rational point.\NB{ref -- also same claim used previously}
It follows that in the case of an infinite field, there is an actual linear projection $\pi: \A^N_k \to \A^d_k$ satisfying all the properties.
\end{remark}

The following is our moving lemma.
It is essentially the same as \cite[Lemma 3.7]{druzhinin2017strictly}.
\begin{proposition} \label{prop:M-redux}
Let $k$ be an infinite field, $X \in \SmAff_k$, $Z \subset X$ a nowhere dense closed subscheme and $\pi: X' \to X \in \SmAff_k$ an étale neighborhood of $Z$.
Write $Z' \subset X'$ for the lift of $Z$.
Let $T' \subset Z'$ be a finite set of closed points and put $T = \pi(T') \subset Z$.
Let $U$ (resp. $U'$) be the semilocalization of $X$ in $T$ (resp. $X'$ in $T'$).
There exist commutative diagrams of essentially smooth, affine $k$-schemes
\begin{equation*}
\begin{CD}
U' @>s'>> C' @>v'>> X' \\
@V{\pi}VV @V{\varpi}VV @V{\pi}VV \\
U @>s>> C @>v>> X,
\end{CD}
\end{equation*}
and
\begin{equation*}
\begin{CD}
C' @>{j'}>> \overline{C'} \\
@V{\varpi}VV @V{\overline{\varpi}}VV \\
C  @>{j}>> \overline C @>>> S,
\end{CD}
\end{equation*}
such that the following hold:
\begin{enumerate}
\item The composites $U \to C \to X$ and $U' \to C' \to X'$ are the canonical inclusions.
\item $j, j'$ are open immersions, $\overline{C'}, \overline{C}$ are projective curves over $S$, $C, C'$ are smooth affine over $S$.
\item $\overline \varpi: \overline{C'} \to \overline{C}$ is finite and $\varpi: C' \to C$ is étale.
\item $\scr Z := v^{-1}(Z)$ and $\scr Z' := v'^{-1}(Z')$ are finite over $S$, and in fact $\scr Z' \xrightarrow{\wequi} \scr Z$.
\item $D' := \overline{C'} \setminus \overline{C}$ and $D := \overline{C'} \setminus \overline{C}$ are finite over $S$.
  Moreover $\overline{\varpi}(D') \supset D$.
\item We have $D = Z(d)$ for some section $d$ of an ample line bundle $\scr O(1)$ on $\overline C$.
\item $\Omega^1_{C/S}$ is trivial (and hence so is $\Omega^1_{C'/S}$).
\end{enumerate}
\end{proposition}

\begin{remark} \label{rmk:M-1}
We can base change $\overline{C}$, $\overline{C'}$ and so on along $U \to C \to S$.
Utilizing also the diagonal maps $U \to U \times_S U$ and $U' \to U' \times_S U$, we find that in Proposition \ref{prop:M-redux} we may can set $S=U$, which is the case of interest. In this case, we have the following extra properties:
\begin{enumerate}
\item The map $U \xrightarrow{s} C$ is a section of the separated morphism $C \to U$, whence its image is a closed subscheme (in fact an effective Cartier divisor) $\Delta \subset C$, which isomorphic to $U$.
\item The map $U \wequi \Delta \subset C \xrightarrow{v} X$ is the canonical inclusion.
\item We have $\Delta \cap \scr Z \wequi Z_T$ via the projection to $U$.
\item The composite $Z'_{T'} \hookrightarrow U' \xrightarrow{s'} C' \to U$ is a closed immersion, whence ($C' \to U$ being separated) we obtain a closed subscheme $\Delta_Z' \subset C'$ mapping isomorphically to $Z'_{T'} \wequi Z_T \subset U$ under the projection.
  Note that $\Delta_Z' \subset \scr Z'$.
\end{enumerate}
\end{remark}

\begin{remark} \label{rmk:M-2}
In the situation of Remark \ref{rmk:M-1}, let $C'' = C' \times_U U'$ and $\overline{C''} = \overline{C'} \times_U U'$.
We obtain the following commutative diagram
\begin{equation*}
\begin{CD}
C'' @>>> C' @>>> X' \\
@VVV  @VVV          \\
\overline{C''} @>>> \overline{C'} \\
@VVV       @VVV \\
U' @>>> U.
\end{CD}
\end{equation*}
Denote the composite $C'' \to C' \to X'$ by $v''$ and put $\scr Z'' = v''^{-1}(Z')$.
Let $D''$ be the preimage of $D'$ in $\overline{C''}$.
Note the following:
\begin{enumerate}
\item $\scr Z''$, $D''$ are finite over $U'$.
\item The pullback of $\scr O(1)$ to $\overline{C''}$ exhibits $C''$ as a projective curve over $U'$.
\item The map $s:U' \to C''$ induces a closed immersion $U' \to C''$; denote its image by $\Delta''$.
  Then $\Delta'' \cap \scr Z''$ maps isomorphically to $Z'_{T'}$ (and to $\Delta_Z'$).
\item The composite $U' \wequi \Delta'' \to X'$ is the canonical inclusion.
\end{enumerate}
\end{remark}

\begin{proof}[Proof of Proposition \ref{prop:M-redux}]
Shrinking $X, X'$ if necessary, and arguing on connected components, we may assume that $X', X$ are pure of dimension $d$ and $\pi^{-1}(Z) = Z'$.
Using Zariski's main theorem \cite[Tag 05K0]{stacks-project}, we obtain a dense open immersion $X' \hookrightarrow \overline{X'}$ over $X$ with $\overline{X'} \to X$ finite.
Choose an embedding $X \hookrightarrow{\A^N}$.
Using general projections (Theorem \ref{thm:projn}) we find a linear map $p_1: \A^N \to \A^d$ such that $X \to \A^d$ is finite, $X \to \A^d$ is étale at $T$, and $p_1^{-1}(p_1(T)) \cap Z \subset T$.
Let $X_0 \subset X$ be an affine open neighborhood of $T$ such that $X_0 \to \A^d$ is étale\footnote{Recall that any open neighborhood of $T$ contains an affine open neighborhood of $T$, as explained in \S\ref{subsec:semilocal}.} and put $X'_0 = \pi^{-1}(X_0)$, $Z_0 = Z \cap X_0$.
Using general projections again, we find a linear map $p_2: \A^d \to \A^{d-1}$ such that $p_1(Z) \to \A^{d-1}$ is finite, $p_1(X \setminus X_0) \to \A^d$ is finite, and $p_2^{-1}(p_2(p_1(T))) \cap p_1(Z \setminus Z_0) = \emptyset$ (the latter is possible since $p_1(Z \setminus Z_0)$ does not contain $p_1(T)$, by construction).
Let $S$ be the semilocalization of $\A^{d-1}$ at the closed subscheme $p_2(p_1(T))$. At this point we have the following diagram
\begin{equation*}
\begin{tikzcd}
X_0' \ar{d} \ar[r, hookrightarrow] & X' \ar{r} \ar{d}{\pi}& \overline{X}' \ar{dl} \\
X_0 \ar[r, hookrightarrow] \ar{d} & X \ar{dl} &\\
\A^d \ar[d], & & \\
\A^{d-1}
\end{tikzcd}
\end{equation*}
Base changing the above diagram along $S \to \A^{d-1}$, we obtain the schemes $C_0', C_1', C_2', C_0, C_1, \A^1_S$ of the following commutative diagram
\begin{equation*}
\begin{tikzcd}
C_0' \ar[r, hookrightarrow] \ar[d] & C_1' \ar[r, hookrightarrow] \ar[d] & C_2' \ar[r, hookrightarrow] \ar[dl] & \overline{C'} \ar[dl] \\
C_0 \ar[r, hookrightarrow] \ar[d] & C_1 \ar[r, hookrightarrow] \ar[dl] & \overline{C} \ar[dl] \\
\A^1_S \ar[d] \ar[r, hookrightarrow] & \P^1_S \ar[dl] \\
S
\end{tikzcd}
\end{equation*}
By construction $C_1 \to \A^1_S$ and $C_2' \to C_1$ are finite.
We obtain $\overline{C}$ by compactifying $C_1 \to \A^1_S \to \P^1_S$ and $\overline{C'}$ by compactifying $C_2' \to C_1 \to \overline{C}$ (using Zariski's main Theorem again).
Write $v_1: C_1 \to X$ for the canonical map.
By construction $\scr Z := v_1^{-1}(Z)$ is finite over $S$.
Moreover $v_1^{-1}(Z \setminus Z_0)$ is finite over $S$ but its image misses the closed points; hence it must be empty.
In other words $v_1^{-1}(Z) \subset C_0$.
Let $v_1': C_1' \to X'$ be the canonical map.
Then $\scr Z' := (v_1')^{-1}(Z') \to \scr Z$ is an isomorphism as needed.
Since the square part of the diagram is cartesian, we find that also $\scr Z' \subset C_0'$.
We shall find at the end a section $d \in H^0(\overline{C}, \scr O(n))$ such that $D := Z(d)$ is finite over $U$, $\overline{C} \setminus C_0 \subset D$ and $D \cap \scr Z = \emptyset$.
We put $C = \overline{C} \setminus D$ and let $C'$ be the preimage of $C$ in $\overline{C'}$.
Since $C_0$ is affine and $C \subset C_0$ is a principal open subset (note $\scr O(1)$ is trivial over $\A^1$ and hence over $C_0$), $C_0$ is affine.
The same argument applies to $C'$.
Since $C' \to C \to \A^1_S$ are étale, the canonical modules vanish as needed.
Since $\overline{X'} \setminus X'_0 \to \A^{d-1}$ is finite so is $C_2' \setminus C_0' \to S$; from this one deduces that $D'$ is finite over $S$.\footnote{Note that if $A \to B$ is finite, $B \subset \overline{B}$ is a dense open immersion, and $A \to \overline{A} \to \overline{B}$ is a compactification, then $A \to \overline{A}|_B$ is both closed ($A \to B$ being finite) and also a dense open immersion, whence an isomorphism. That is, the $\overline{A} \setminus A$ lies completely over $\overline{B} \setminus B$.}
The natural maps $U \to C_0$ and $U' \to C_0'$ factor through $C$ and $C'$ respectively, since the images of the closed points $T$ do.
It follows that the theorem is proved, up to constructing $d$.

First note that $D_0 := \overline{C} \setminus C_0$ is finite over $S$.
Indeed it is proper, so we need only establish quasi-finiteness; but $C_1 \setminus C_0 \to S$ is finite by construction and so is $\overline{C} \setminus C_1 \to (\P^1_S \setminus \A^1_S) \wequi S$, as needed.\footnote{See the previous footnote.}
For a closed point $s \in S$, let $R$ be a connected component of dimension $1$ of $\overline{C}_s$.
Since $(D_0)_s \to s$ is finite, it cannot contain all of $R$; let $x_R \in R \setminus D$.
Now pick $d$ such that $d|_{D_0} = 0$, $d|_{\scr Z} \ne 0$ and $d(x_R) \ne 0$ for all such $(R,s)$ (of which there are only finitely many).
This satisfies the required properties (the only non-trivial claim is that $Z(d) \to S$ is finite. But using properness and semicontinuity of fiber dimension \cite[Tag 0D4I]{stacks-project}, it suffices to prove quasi-finiteness over the closed points, which we have ensured).
\end{proof}

\subsection{Injectivity for semilocal schemes} We now verify that any framed pretheory satisfy IL (in fact we prove a somewhat stronger property).
\begin{lemma} \label{lemm:inj-H}
Let $k$ be an infinite field, $X \in \SmAff_k$, $Z \subset X$ closed and nowhere dense, $x_1, \dots, x_n \in Z$, $U$ the semilocalization of $X$ in the $x_i$.
Then there are curve correspondences $\Phi, \Phi^-: U \rightsquigarrow X \setminus Z$, $\Theta: U \times \A^1 \rightsquigarrow X$ such that for any framed pretheory, \[ i_0^*\Theta^* = \Phi^* \circ (X \setminus Z \to X)^* \] and \[ i_1^*\Theta^* = \tw \circ (U \to X)^* + (\Phi^-)^* \circ (X \setminus Z \to X)^*, \] where $\tw$ is some automorphism of $F(U)$.
\end{lemma}
\begin{proof}
We first show that we may assume that $x_1, \dots, x_n \in X$ are closed.
Indeed if not, pick closed specializations $y_1, \dots, y_n$.
Note that $y_i \in Z$ ($Z$ being closed) and $x_i \in X_{y_1, \dots, y_n}$ ($X_{y_1, \dots, y_n}$ being an intersection of open subsets containing $y_i$).
Applying the claim with the $y_i$ in place of the $x_i$ yields curve correspondences $\Phi', \Theta'$ over $X_{y_1, \dots, y_n}$.
Pulling them back along $X_{x_1, \dots, x_n} \to X_{y_1, \dots, y_n}$ yields the desired result.

Hence from now on we assume that the $x_i$ are closed.
Apply Proposition \ref{prop:M-redux} and Remark \ref{rmk:M-1} to the identity map $(X, Z) \to (X, Z)$, with $T=\{x_1, \dots, x_n\}$.
We hence obtain a diagram \[ X \xleftarrow{v} C \xrightarrow{j} \overline{C} \xrightarrow{p} U \] where $\overline C \to U$ is a projective curve with ample line bundle $\scr O(1)$, $D = \overline{C} \setminus Z$ is given by $Z(d)$ for some $d \in \scr O(1)$ and is finite over $U$, $\Omega^1_{C/U}$ is trivial, $Z' := v^{-1}(Z)$ is finite over $U$, $j$ is an open immersion and $C$ is smooth over $U$.

Note that $\Delta: U \to C$ is a regular immersion of codimension $1$, and hence $\Delta \subset \overline C$ is a divisor.
In particular $\scr O(-\Delta)$ (the ideal sheaf defining $\Delta$) is a line bundle on $\overline C$, with inverse $\scr O(\Delta)$.
We shall show at the end that for $n$ sufficiently large, we can find sections $s \in H^0(\overline C, \scr O(n))$, $s'\in H^0(\overline C, \scr O(n) \otimes \scr O(-\Delta))$  satisfying the following:
\begin{itemize}
\item $s|_{Z'}$ and $s'|_{Z' \cup D \cup \Delta}$ are non-vanishing
\item $s|_D = s' \otimes \delta$, where $\delta \in H^0(\overline{C}, \scr O(\Delta))$ defines $\Delta$
\end{itemize}
Set $\tilde s = (1-t)s + t s' \otimes \delta$.
Since $\tilde s$ is constantly non-zero on $D \times \A^1$, $Z(\tilde s) \subset C \times \A^1$ and so this is affine and proper, whence finite, over $U \times \A^1$.
Similarly $Z(s), Z(s')$ are finite over $U$.
Note that $Z(s), Z(s') \subset C \setminus Z'$, and $Z(s' \otimes \delta) = Z(s') \amalg Z(\delta)$ (since $Z(\delta) = \Delta$ and so $Z(s') \cap Z(\delta) = \emptyset$).
Pick an isomorphism $\mu: \Omega^1_{C/U} \wequi \scr O_C$.
Put \[ \Theta = (U \times \A^1 \leftarrow C \times \A^1 \xrightarrow{v} X, \tilde s/d^n, \mu, Z(\tilde s)), \] \[ \Phi = (U \leftarrow C \setminus Z' \xrightarrow{v} X \setminus Z, s/d^n, \mu, Z(s)), \] \[ \Phi^- = (U \leftarrow C \setminus Z' \xrightarrow{v} X \setminus Z, s' \otimes \delta/d^n, \mu, Z(s')). \]
Then by construction $i_0^* \Theta^* = \Phi^*\circ (X \setminus Z \to X)^*$ and \begin{gather*} i_1^*\Theta^* = (U \leftarrow C \xrightarrow{v} X, s' \otimes \delta/d^n, \mu, Z(s' \otimes \delta))^* \\ = (U \leftarrow C \xrightarrow{v} X, s' \otimes \delta/d^n, \mu, Z(\delta))^* + (\Phi^-)^* \circ (X \setminus Z \to X)^*. \end{gather*}
Since $Z(\delta) \to U$ is an isomorphism, the first term is $\tw(s' \otimes \delta/d^n)^\mu_{Z(\delta)} \circ (U \wequi \Delta \to X)^*$ by Definition \ref{def:framed-preth}(4).
We conclude since $U \wequi \Delta \to X$ is the canonical map, by construction.

It remains to construct $s, s'$.
For $n$ large enough, both \[ H^0(\overline C, \scr O(n) \otimes \scr O(-\Delta)) \to H^0(Z' \cup D \cup \Delta, \scr O(n) \otimes \scr O(-\Delta)) \] and \[ H^0(\overline C, \scr O(n)) \to H^0(Z' \amalg D, \scr O(n)) \] are surjective (see \S\ref{subsec:serre}).
Since $Z' \cup D \cup \Delta$ is semilocal (being proper and quasi-finite, hence finite, over $U$), $\scr O(n) \otimes \scr O(-\Delta)$ admits a non-vanishing section section on it (see \S\ref{subsec:semilocal}); let $s'$ be any lift thereof.
Note that $H^0(Z' \amalg D, \scr O(n)) \wequi H^0(Z', \scr O(n)) \times H^0(D, \scr O(n))$; let $s$ be any lift of $(s' \otimes \delta|_{Z'}, 1)$, where $1 \in H^0(Z', \scr O(n))$ is a non-vanishing section.
The required properties hold by construction.
\end{proof}

\begin{theorem} \label{thm:inj}
Let $U$ be a semilocal scheme, essentially smooth over an infinite field $k$.
Let $Z \subset U$ be a closed subscheme not containing any connected component of $U$.
Then for any $\A^1$-invariant framed pretheory $F$, the restriction $F(U) \to F(U \setminus Z)$ is injective.
\end{theorem}
\begin{proof}
Since $U$ is semilocal, it has only finitely many connected components.\NB{each connected component contains a closed point}
Since $F(A \amalg B) \wequi F(A) \times F(B)$, we may argue separately for each connected component of $U$; hence we may assume that $U$ is connected.
If $U$ has only one point, the result is trivial.
We may thus assume that the subset $Z_0 \subset U$ of closed points is a proper closed subscheme.
Replacing $Z$ by $Z \cup Z_0$, we may assume that $Z$ contains all closed points.
Replacing $Z$ by a larger proper closed subscheme, we may also assume that $Z$ is finitely presented (e.g. principal).
Write $U = \lim_i V_i$, where $V_i \in \SmAff_k$.
Replacing $V_i$ by its single connected component containing the image of $U$, we may assume each $V_i$ is connected.
Since $Z$ is a finitely presented closed subscheme, without loss of generality we may assume that $Z = Z_0 \times_{V_0} U$, where $Z_0 \subset V_0$ is closed.
If $Z_i := Z_0 \times_{V_0} V_i$ contains all of $V_i$, then $Z$ contains all of $X$, which is not the case.
It follows that $Z_i$ is nowhere dense in $V_i$, for each $i$.
Let $x_1^{(i)}, \dots, x_n^{(i)} \in Z_i$ denote the images of the closed points of $X$.
Let $U_i = (V_i)_{x_1^{(i)}, \dots, x_n^{(i)}}$ be the semilocalization (see \S\ref{subsec:semilocal}).
By continuity, it will suffice to prove that $F(U_i) \to F(U_i \setminus Z_i)$ is injective for each $i$.

In other words we may assume that $U = V_{x_1, \dots, x_n}$ where $V$ is a smooth affine scheme, $Z \subset V$ nowhere dense, $x_i \in Z$.
Let $X \subset V$ be an open affine neighborhood of the $x_i$.
Lemma \ref{lemm:inj-H} shows that \[ \ker(F(X) \to F(X \setminus Z) ) \subset \ker(F(X) \to F(U)). \]
Indeed if $x \in F(X)$ with $x|_{X \setminus Z} = 0$, then \[ 0 = \Phi^*(x|_{X \setminus Z}) - (\Phi^-)^*(x|_{X \setminus Z}) = \Phi^*(x|_{X \setminus Z}) - i_0^*\Theta^*(x) = \Phi^*(x|_{X \setminus Z}) - i_1^*\Theta^*(x) = -  \tw(x|_{U}) \] and so $x|_U = 0$, $\tw$ being invertible.
Now taking the (filtered, whence exact) colimit over all such $X$ we obtain the desired result.
\end{proof}

\begin{corollary} \label{cor:IL}
Let $U$ be a semilocal connected scheme, essentially smooth over an infinite field $k$.
Write $\eta \in U$ for the generic point.
Then for any $\A^1$-invariant framed pretheory $F$, the map $F(X) \to F(\eta)$ is injective.
In particular $F$ satisfies IL.
\end{corollary}
\begin{proof}
By Theorem \ref{thm:inj}, $F(U) \to F(V)$ is injective for every non-empty open affine subscheme $V \subset U$.
The result follows by taking the filtered colimit over all such $V$.
\end{proof}

\subsection{Excision on the relative affine line} We now proceed with EA. 
\begin{lemma} \label{lemm:EA-smooth}
Let $U \in \SmAff_k$, $V \subset \A^1_U$ open, $0_U \subset V$.
Write $i: (V, V \setminus 0) \to (\A^1_U, \A^1_U \setminus 0)$ for the open immersion of pairs.
There exist curve correspondences of pairs \[ \Phi, \Psi: (\A^1_U, \A^1_U \setminus 0) \rightsquigarrow (V, V \setminus 0), \] \[ \Theta_1: (\A^1_U, \A^1_U \setminus 0) \times \A^1 \rightsquigarrow (\A^1_U, \A^1_U \setminus 0), \] \[ \Theta_2: (V, V \setminus 0) \times \A^1 \rightsquigarrow (V, V \setminus 0) \] such that for any framed pretheory
\begin{enumerate}
\item $i_0^* \Theta_1^* = \Phi^*i^*$, $i_1^* \Theta_1^*$ is invertible,
\item $i_0^* \Theta_2^* = i^* \Psi^*$, $i_1^* \Theta_2^*$ is invertible.
\end{enumerate}
\end{lemma}
Note that here we are using the pullback along a curve correspondence of pairs from Construction \ref{cons:pair-pullback}; thus for example $\Phi^*$ is a map \[ \Phi^*: F(V \setminus 0)/F(V) \to F(\A^1_U \setminus 0)/F(\A^1_U). \]
\begin{proof}
(1) We shall construct sections $s \in H^0(\P^1 \times \A^1_U, \scr O(n))$ and $s' \in H^0(\P^1 \times \A^1_U, \scr O(n-1))$, for some $n > 0$, satisfying the following properties.
Denote the coordinate on $\A^1$ by $x$ and on $\P^1$ by $y=(Y_0:Y_1)$.
Let $\delta = Y_1 - xY_0 \in H^0(\P^1 \times \A^1_U, \scr O(1))$; observe that $Z(\delta)$ defines the diagonal $\{x=y\} \hookrightarrow \P^1 \times \A^1_U$.
Let $D = \P^1_U \setminus V$.
We shall ensure that:
\begin{itemize}
\item $s|_{D \times \A^1}$, $s'|_{0 \times \A^1_U}$, $s'|_{Z(\delta)}$ and $s'|_{\infty \times \A^1_U}$ are all non-vanishing,
\item $s|_{0 \times \A^1_U} = \delta s'$, and
\item $s|_{\infty \times \A^1_U} = \delta s'$.
\end{itemize}
Put $\tilde s = (1-t)s + t\delta s' \in H^0(\P^1 \times \A^1_U \times \A^1)$.
Now consider the function $f = s/Y_0^n$ on $V \times \A^1 \subset \P^1 \times \A^1_U$ and the function $\tilde f = \tilde s/Y_0^n$ on $\A^3_U \subset \P^1 \times \A^1_U \times \A^1$.
We claim that the following are curve correspondences of pairs \[ \Phi = (\A^1_U \xleftarrow{pr_x} V \times \A^1 \xrightarrow{pr_y} V, f, dy, Z(f)), \] \[ \Theta_1 = (\A^1_U \times \A^1 \xleftarrow{pr_x} \A^3_U \xrightarrow{pr_y} \A^1_U, \tilde f, dy, Z(\tilde f)), \] satisfying the required properties.

To begin with, since $\tilde s$ is constantly non-zero over $\infty$, we find that $Z(\tilde s) = Z(\tilde f)$.
In particular this is both proper and affine, whence finite, over $\A^1_U \times \A^1$.
Similarly $Z(f) = Z(s)$ is finite over $\A^1_U$.
Further $\tilde s|_{y=0}$ is constantly equal to $\delta s'$, which vanishes there only if $y=x$, i.e. $x=0$.
It follows that $\Theta_1$ and $\Phi$ are well-defined curve correspondences of pairs as displayed in the proposition.
By construction we have $i_0^* \Theta_1^* = \Phi^* i^*$.
On the other hand, $Z(\delta) \cap Z(s') = \emptyset$ by assumption and thus $Z(\delta s') = Z(\delta) \amalg Z(s')$, so we get \begin{gather*} i_1^* \Theta_1^* = (\A^1_U \xleftarrow{pr_x} \A^2_U \xrightarrow{pr_y} \A^1_U, \delta s'/Y_0^n, dy, Z(\delta))^* \\ + (\A^1_U \xleftarrow{pr_x} \A^2_U \xrightarrow{pr_y} \A^1_U, \delta s'/Y_0^n, dy, Z(s'))^*. \end{gather*}
The first term is invertible by Definition \ref{def:framed-preth}(4), since $Z(\delta)$ maps isomorphically to $\A^1_U$ via both $x$ and $y$.
The second term vanishes by Lemma \ref{lemm:almost-null}, since $Z(s') \subset (\A^1_U \setminus 0) \times \A^1$ by construction. We have thus proved (1) up to constructing $s$ and $s'$.

We now construct $s$ and $s'$.
By Serre's theorem (see \S\ref{subsec:serre}) we may ensure that \[s|_{D \times \A^1} = Y_1^n, \quad s|_{0 \times \A^1_U} = Y_0^{n-1}\delta, \quad s'|_{\infty \times \A^1_U} = Y_1^{n-1}, \quad s'|_{0 \times \A^1_U} = Y_0^{n-1}, \quad s'|_{Z(\delta)} = Y_0^{n-1}. \]
Since $Y_1$ only vanishes at $0 \not\in D$, $s|_{D \times \A^1}$ is non-vanishing.
The other non-vanishing conditions hold for similar reasons.
Since $\delta = Y_1$ at $\infty$ (i.e. $Y_0=0$), $s = s' \delta$ there.
The agreement at $0$ holds by construction.

(2) The argument is very similar.
One constructs sections $s \in H^0(\P^1 \times \A^1_U, \scr O(n))$ and $s' \in H^0(\P^1 \times V, \scr O(n-1))$ such that:
\begin{itemize}
\item $s|_{D \times V}$, $s'|_{0 \times V}$, $s'|_{Z(\delta)}$ and $s'|_{D \times V}$ are is non-vanishing,
\item $s|_{0 \times V} = \delta s'$, and
\item $s|_{D \times V} = \delta s'$.
\end{itemize}
This is done by using Serre's theorem to ensure that \[s|_{D \times V} = Y_1^n, \quad s|_{0 \times V} = Y_0^{n-1}\delta, \quad s'|_{D \times V} = Y_1^n \delta^{-1}, \quad s'|_{0 \times V} = Y_0^{n-1}, \quad s'|_{Z(\delta)} = Y_0^{n-1}, \] and arguing as before.
Put $\tilde s = (1-t)s + t\delta s' \in H^0(\P^1 \times V \times \A^1)$.
Arguing as before that this is well-defined, we obtain curve correspondences of pairs \[ \Psi = (\A^1_U \xleftarrow{pr_x} V \times \A^1 \xrightarrow{pr_y} V, s/Y_0^n, dy, Z(s)) \] and \[ \Theta_2 = (V \times \A^1 \xleftarrow{pr_x} V \times_U V \times \A^1 \xrightarrow{pr_y} V, \tilde s/Y_0^n, dy, Z(\tilde s)). \]
One checks as before that these satisfy the required properties.
\end{proof}

We also have the following variant.
\begin{lemma} \label{lemm:EA-field}
Let $K$ be a field, $z \in \A^1_K$ closed and $V \subset \A^1_K$ an open neighbourhood of $z$.
Write $i: (V, V \setminus z) \to (\A^1_K, \A^1_K \setminus z)$ for the open immersion of pairs.
There exist curve correspondences of pairs \[ \Phi, \Psi: (\A^1_K, \A^1_K \setminus z) \rightsquigarrow (V, V \setminus z), \] \[ \Theta_1: (\A^1_K, \A^1_K \setminus z) \times \A^1 \rightsquigarrow (\A^1_K, \A^1_K \setminus z), \] \[ \Theta_2: (V, V \setminus z) \times \A^1 \rightsquigarrow (V, V \setminus z) \] such that for any framed pretheory
\begin{enumerate}
\item $i_0^* \Theta_1^* = \Phi^*i^*$, $i_1^* \Theta_1^*$ is invertible,
\item $i_0^* \Theta_2^* = i^* \Psi^*$, $i_1^* \Theta_2^*$ is invertible.
\end{enumerate}
\end{lemma}
\begin{proof}
The proofs is almost the same as for Lemma \ref{lemm:EA-smooth}.
Let $d$ be the degree of $z$; then there exists a section $\nu \in H^0(\P^1 \times \A^1_K, \scr O(d))$ such that $Z(\nu) = z \times \A^1$.
Now replace $\scr O(1)$ by $\scr O(d)$, $t_1$ by $\nu$ and $t_0$ by $t_0^d$ in the previous argument.\todo{details?}
\end{proof}

We can use this to prove EA.
\begin{theorem} \label{thm:EA}
Let $U$ be essentially smooth, affine over a field $k$, $V \subset \A^1_U$ an open subscheme containing $0_U$.
Let $F$ be an $\A^1$-invariant framed pretheory.
Then restriction induces \[ F(\A^1_U \setminus 0_U)/F(\A^1_U) \wequi F(V \setminus 0_U)/F(V). \]

Similarly if $K$ is a field, $z \in \A^1_K$ is closed and $V$ is an open neighbourhood, then  \[ F(\A^1_K \setminus z)/F(\A^1_K) \wequi F(V \setminus z)/F(V). \]
\end{theorem}
\begin{proof}
Since $V \to \A^1_U$ is affine it is quasi-compact \cite[Tag 01K4]{stacks-project} and hence of finite presentation \cite[Tag 01TU]{stacks-project}.
It follows that when writing $U = \lim_i U_i$ as a cofiltered limit of smooth affine schemes, we may assume given $V_0 \subset \A^1_{U_0}$ affine open with base change $V$ \cite[Tags 01ZM and 0EUU]{stacks-project}.
By continuity of $F$, we may thus assume that $U$ is smooth over $k$.
The first statement now follows from Lemma \ref{lemm:EA-smooth}.
(Recall that if $i: A \to B, \Phi, \Psi: B \to A$ are maps of sets such that $\Phi \circ i: A \to A$ and $i \circ \Psi: B \to B$ are invertible, then $A \xrightarrow{i} B \xrightarrow{\Phi} A$ is injective so $i$ is injective, and $B \xrightarrow{\Psi} A \xrightarrow{i} B$ is surjective so $i$ is surjective, i.e., $i$ is invertible.)

The second statement is immediate from Lemma \ref{lemm:EA-field}.
\end{proof}

\subsection{Étale excision} Finally we treat EE.
\begin{lemma} \label{lemm:EE-smooth}
Let $X \in \SmAff_k$, $Z \subset X$ a closed subscheme and $\pi: (X',Z') \to (X,Z) \in \SmAff_k$ an étale neighborhood.
Let $z' \in Z'$ and put $z = \pi(z') \in Z$.
Let $U = X_z$, $U' = X'_{z'}$.

Write $i: (U, U \setminus Z) \to (X, X \setminus Z)$ and $i': (U', U' \setminus Z') \to (X', X' \setminus Z')$ for the open immersions of pairs.
There exist curve correspondences of pairs \[ \Phi, \Psi: (U, U \setminus Z) \rightsquigarrow (X', X' \setminus Z'), \] \[\Theta_1: (U, U \setminus Z) \times \A^1 \to (X, X \setminus Z) ,\] \[\Theta_2: (U', U' \setminus Z') \times \A^1 \to (X', X' \setminus Z') \] such that for any framed pretheory
\begin{enumerate}
\item $i_0^* \Theta_1^* = \Phi^* \pi^*$, $i_1^* \Theta_1^* = \tw \circ i^*$, where $\tw$ is some automorphism of $F(U)$
\item $i_0^* \Theta_2^* = \pi^* \Psi^*$, $i_1^* \Theta_2^* = i'^*$.
\end{enumerate}
\end{lemma}
\begin{proof}
Shrinking $X'$ if necessary, we may assume that $Z' = \pi^{-1}(Z)$.

(1) We apply Proposition \ref{prop:M-redux} and Remark \ref{rmk:M-1} and hence obtain a diagram in notation as stated there (with $S=U$).
We shall (at the end) find sections $s \in H^0(\overline C, \scr O(n))$ and $s' \in H^0(\overline C, \scr O(n) \otimes \scr O(-\Delta))$ satisfying the following:
\begin{itemize}
\item $s'|_{D \cup \scr Z \cup \Delta}$ is non-vanishing
\item $s|_{D \cup \scr Z} = s' \otimes \delta$, where $\delta \in H^0(\overline C, \scr O(\Delta))$ defines $\Delta$
\item $Z(s) = Z_0 \amalg Z_1$ where $\varpi$ is an étale neighborhood of $Z_0$ and $Z_1 \cap \scr Z = \emptyset$.
\end{itemize}
Put \[ \tilde s = (1-t)s + t\delta \otimes s' \in H^0(\overline{C} \times \A^1, \scr O(n)). \]
Then $\tilde s$ is constantly non-vanishing on $D \times \A^1$.
In particular $Z(\tilde s) \subset C \times \A^1$ is finite (since it is proper and affine) over $U \times \A^1$.
Moreover $\tilde s$ is constantly equal to $\delta \otimes s'$ on $\scr Z \times \A^1$.
It follows that $Z(\tilde s) \cap \scr Z = \Delta \cap \scr Z$ lies over $Z_z \subset U$.
Choose a trivialization $\mu: \Omega^1_{C/U} \wequi \scr O_C$.
Consider \[ \Theta_1 = (U \times \A^1 \leftarrow C \times \A^1 \xrightarrow{v} X, \tilde s/d^n, \mu, Z(\tilde s)): (U, U \setminus Z) \times  \A^1 \rightsquigarrow (X, X \setminus Z). \]
What we have said so far shows that this is a well-defined curve correspondence of pairs.
We get \[ i_1^* \Theta_1^* = (U \leftarrow C \to X, \delta \otimes s'/d^n, \mu, Z(\delta))^* + (U \leftarrow C \to X, \delta \otimes s'/d^n, \mu, Z(s'))^*. \]
Since $Z(s') \cap \scr Z = \emptyset$, the second term vanishes by Lemma \ref{lemm:almost-null}.
Since $U \wequi Z(\delta) \to X$ is the canonical map, the first term is $\tw(\delta \otimes s'/d^n, \mu, Z(s')) \circ i^*,$ as needed.
Similarly \[ i_0^*  \Theta_1^* = (U \leftarrow C \to X, s/d^n, \mu, Z_0)^* + (U \leftarrow C \to X, s/d^n, \mu, Z_1)^*. \]
Since $Z_1 \cap \scr Z = \emptyset$, the second term vanishes.
On the other hand by construction $C' \to C$ is an étale neighborhood of $Z_0$.
Let \[ \Phi = (U \leftarrow C' \xrightarrow{v'} X', s/d^n, \mu, Z_0). \]
This has the required property, by Definition \ref{def:framed-preth}(2).

It remains to construct $s, s'$.
Since $\Delta \subset C$ is an effective Cartier divisor, and $\scr Z$ is semilocal, $\Delta \cap \scr Z \to \scr Z$ principal, and hence so is (the isomorphic map) $\Delta_Z' \to \scr Z'$.
Applying Lemma \ref{lemm:find-Z0}, we obtain an effective divisor $Z(\xi) =: Z_0 \subset C \subset \overline C$ (finite over $U$) such that $\varpi$ is an étale neighborhood and $Z_0 \cap \scr Z = \Delta \cap \scr Z$ (as schemes).
Let $\zeta \in \scr O(-Z_0)$ be the section defining $Z_0$.
Pick $n$ large enough such that both of the maps \[ H^0(\overline C, \scr O(n) \otimes \scr O(-Z_0)) \to H^0(\scr Z \cup D, \scr O(n) \otimes \scr O(-Z_0)) \] and  \[ H^0(\overline C, \scr O(n) \otimes \scr O(-\Delta)) \to H^0(\scr Z \cup D \cup \Delta, \scr O(n) \otimes \scr O(-\Delta)) \] are surjective.
Since $\scr Z \cup D$ is semilocal, $\scr O(n) \otimes \scr O(-Z_0)$ admits a non-vanishing section on it.
Let $\zeta'$ be a lift of such a non-vanishing section to $\overline C$ and put $s = \zeta \otimes \zeta'$.
By construction $s|_{\scr Z \amalg D} = s_0 \otimes \delta$, for some non-vanishing section $s_0 \in H^0(\scr Z \cup D, \scr O(n) \otimes \scr O(-\Delta))$.
We may find a non-vanishing section $s_1 \in H^0(\scr Z \cup D \cup \Delta, \scr O(n) \otimes \scr O(-\Delta))$ extending $s_0$ (see \S\ref{subsec:semilocal}); finally let $s' \in H^0(\overline C, \scr O(n) \otimes \scr O(-\Delta))$ be any lift of $s_1$.
The required properties hold by construction.

(2) We apply Proposition \ref{prop:M-redux} and Remark \ref{rmk:M-2} and hence obtain a diagram in notation as stated there.
Let $\lambda \in H^0(\Delta'', \scr O(-\Delta''))$ be a generator as in Definition \ref{def:framed-preth}(5) (with $Z=\Delta'' \subset C''$).
We shall at the end find sections $s \in H^0(\overline{C'}, \scr O(n))$ and $s' \in H^0(\overline{C''}, \scr O(n) \otimes \scr O(-\Delta''))$ such that:
\begin{itemize}
\item $s'|_{\scr Z''}$ and $s|_{D'}$ are non-vanishing
\item $s'|_{\Delta''} = d^n \lambda$
\item $s'|_{D''} = \delta^{-1} \overline{\varpi''}^*(s)$, where $\delta \in H^0(\overline{C''}, \scr O(\Delta''))$ defines $\Delta''$
\item $\overline{\varpi''}^*(s)|_{\scr Z''} = s' \otimes \delta$.
\end{itemize}
Put $\tilde s = (1-t)\overline{\varpi''}^*(s) + t \delta \otimes s'$.
As before $\tilde s$ is constantly non-vanishing over $D'' \times \A^1$ and hence $Z(\tilde s) \subset C''$ is finite over $U' \times \A^1$.
Also $\tilde s$ is constantly equal to $s' \otimes \delta$ on $\scr Z'' \times \A^1$, and so $Z(\tilde s) \cap \scr Z''$ lies over $Z'_{z'} \subset U'$.
We thus obtain a curve correspondence of pairs \[ \Theta_2 = (U' \times \A^1 \leftarrow C'' \times \A^1 \xrightarrow{v''} X', \tilde s/d^n, \mu, Z(\tilde s)): (U', U' \setminus Z') \times \A^1 \rightsquigarrow (X', X' \setminus Z'). \]
Similarly we obtain \[ \Psi = (U \leftarrow C \xrightarrow{v'} X', s/d^n, \mu, Z(s)): (U, U \setminus Z) \to (X', X' \setminus Z'). \]
Arguing as before we see that $i_0^* \Theta_2^* = \pi^*\Psi^*$ and \[ i_1^* \Theta_2^* = \tw(\delta \otimes s'/d^n)^\mu_{Z(\delta)} \circ i^*. \]
Since $Z(\delta) = \Delta''$ and $d(\delta \otimes s'/d^n) = \lambda$\NB{details?}, by construction (see Definition \ref{def:framed-preth}(5)) we have $\tw(\delta \otimes s'/d^n)^\mu_{Z(\delta)} = \id$, as needed.

It thus remains to construct $s,s'$.
Write $Z_1 = \scr Z'' \cap \Delta''$ and $Z_2$ for its image in $C'$.
Then $Z_1 \wequi Z_2 \wequi Z_z$, and $Z_1 \xrightarrow{\wequi} Z_2 \times_U U'$.
The closed immersion $Z_2 \to \scr Z'$ is isomorphic to $\Delta \cap \scr Z \to \scr Z$, whence locally principal, and so principal since $\scr Z'$ is semilocal.
Let $\rho \in \scr O(\scr Z')$ cut out $Z_2$, so that $\overline{\varpi}^*(\rho)$ cuts out $Z_1$.
We may thus write $\overline{\varpi}^*(\rho) = \rho' \otimes \delta$, with $\rho' \in H^0(\scr Z'', \scr O(-\Delta))$ a generator.
Now $\lambda|_{Z_1}$ and $\rho'|_{Z_1}$ both generate $\scr O(-\Delta)|_{Z_1}$ and hence differ by a unit.
Since $\scr O^\times(\scr Z') \to \scr O^\times(Z_2) \wequi \scr O^\times(Z_1)$ is surjective ($\scr Z'$ being semilocal), we may multiply $\rho$ by a unit and so assume that $\lambda|_{Z_1} = \rho'|_{Z_1}$.
Since $\scr Z'' \cup \Delta''$ is the pushout in schemes of $\scr Z'' \leftarrow \scr Z'' \cap \Delta \to \Delta''$ \cite[Tag 0C4J]{stacks-project}, there exists $\tilde \lambda \in H^0(\scr Z'' \cup \Delta'', \scr O(-\Delta''))$ such that $\tilde \lambda|_{\scr Z''} = \rho'$ and $\tilde \lambda|_{\Delta''} = \lambda$.
Choose $n$ large enough such that \[ H^0(\overline{C'}, \scr O(n)) \to H^0(D' \amalg \scr Z', \scr O(n)) \] and \[ H^0(\overline{C''}, \scr O(n) \otimes \scr O(-\Delta'')) \to H^0(D'' \amalg (\Delta'' \cup \scr Z''), \scr O(n) \otimes \scr O(-\Delta'')) \] are surjective.
Let $s'$ be a lift of $(1,\rho d^n)$, where $1 \in H^0(D', \scr O(n))$ is a non-vanishing section.
Let $s$ be a lift of $(\delta^{-1} \cdot \overline{\varpi}^*(1),\tilde \lambda d^n)$.
The required properties hold by construction.
\end{proof}

\begin{theorem} \label{thm:EE}
Let $k$ be an infinite field and $\pi: U' \to U$ a cofiltered limit of étale morphisms of smooth $k$-schemes.
Assume that $U', U$ are local schemes and $\pi$ is a local morphism.
Let $Z' \subset U', Z \subset U$ be finitely presented closed subschemes such that $\pi$ induces an isomorphism of $Z'$ onto $Z$.
Let $F$ be an $\A^1$-invariant framed pretheory.
Then $\pi^*$ induces \[ F(U \setminus Z)/F(U) \wequi F(U' \setminus Z')/F(U'). \]
\end{theorem}
\begin{proof}
Since $Z, Z'$ are finitely presented we may without loss of generality assume that $\pi = \lim_\alpha \pi_\alpha$, where $\pi_\alpha: (X_\alpha', Z_\alpha') \to (X_\alpha, Z_\alpha)$ is an étale neighborhood of smooth affine $k$-schemes, and $Z = \lim_\alpha Z_\alpha, Z' = \lim_\alpha Z_\alpha'$.
Write $z_\alpha, z'_\alpha$ for the images in $X_\alpha, X_\alpha'$ of the closed point.
Set $U_\alpha = (X_\alpha)_{z_\alpha}$, $U_\alpha' = (X_\alpha')_{z_\alpha'}$ and write $\overline{\pi}_\alpha: U_\alpha' \to U_\alpha$ for the restriction of $\pi_\alpha$.
Consider the commutative diagram
\begin{equation*}
\begin{CD}
F(U_\alpha \setminus Z_\alpha)/F(U_\alpha) @>{\overline{\pi}^*_\alpha}>> F(U_\alpha' \setminus Z_\alpha')/F(U_\alpha') \\
@A{i^*_\alpha}AA                         @A{i'^*_\alpha}AA \\
F(X_\alpha \setminus Z_\alpha)/F(X_\alpha)       @>{\pi^*_\alpha}>> F(X_\alpha' \setminus Z_\alpha')/F(X_\alpha').
\end{CD}
\end{equation*}
Lemma \ref{lemm:EE-smooth} yields equations \[ \Phi_\alpha^* \pi^*_\alpha = \tw \circ i^*_\alpha \quad\text{and}\quad \overline{\pi}^*_\alpha \Psi^*_\alpha = i'^*_\alpha. \]
These show that \begin{gather*} \ker(\pi^*_\alpha) \subset \ker(i^*_\alpha) \quad\text{and}\quad \cok(i'^*_\alpha) \twoheadrightarrow \cok(\overline{\pi}^*_\alpha). \end{gather*}
Taking the colimit over all $\alpha$ concludes the proof (noting that $\lim_\alpha i_\alpha: \lim_\alpha U_\alpha \to \lim_\alpha X_\alpha$ is an isomorphism, and similarly for $i'$).
\end{proof}

\subsection{Conclusion}
We have proved the following.
\begin{theorem} \label{thm:axioms}
Any $\A^1$-invariant framed pretheory (see Definition \ref{def:framed-preth}) over an infinite field satisfies the axioms IA, EA, IL, and EE of \S\ref{subsec:formalism}.
\end{theorem}
\begin{proof}
Combine Theorems \ref{thm:IA}, \ref{thm:EA} and \ref{thm:EE}, and Corollary \ref{cor:IL}.
\end{proof}

We can now prove the main theorem.
\begin{proof}[Proof of Theorem \ref{thm:strictly-invariant}.]
We first consider the case where $k$ is infinite.
We know that the forgetful functor $\PSh_\Sigma(\Cor^\fr(k)) \to \PSh_\Sigma(\Sm_k)$ commutes with $\Omega_\Gm$ (for trivial reasons) and $L_\Nis$ (by \cite[Proposition 3.2.14]{EHKSY1}).
Thus if $F \in \PSh_\Sigma(\Cor^\fr(k), \Ab)$, then so are $F_{-1}$ and $H^i(-;F)$.
Suppose that $F$ is $\A^1$-invariant.
Then $H^i(\ph, F)$ is $\A^1$-invariant by Theorem \ref{thm:strict-A1-formal} (which applies because of Theorem \ref{thm:axioms} and Example \ref{ex:framed}), provided $k$ is perfect.
For $H^0$ we do not need perfectness; see Remark \ref{rmk:H0-imperf}.
The fact that $F$ coincides with its sheafification on open subsets of $\A^1$ is Lemma \ref{lemm:strict-formal-field}.

Now let $k$ be finite and $F \in \PSh_\Sigma(\Cor^\fr(k), \SH)$ be $\A^1$-invariant.
It suffices to prove that $F \to L_\mot F$ is a Nisnevich local equivalence (indeed then $L_\Nis F \wequi L_\mot F$ is $\A^1$-invariant).
That is, we must prove that $F(X) \wequi (L_\mot F)(X)$ for any $X$ which is essentially smooth, henselian local over $k$.
Arguing as in \cite[Corollary B.2.5]{EHKSY1}, for this it suffices to prove that if $k'/k$ is an infinite perfect extension of $k$, then $F(X_{k'}) \wequi (L_\mot F)(X_{k'})$.
Since $X_{k'}$ is a finite disjoint union of henselian local schemes, this follows (using that $L_\mot$ commutes with essentially smooth base change \cite[Lemma A.4]{hoyois-algebraic-cobordism}) from $L_\Nis(F|_{\Sm_{k'}}) \wequi L_\mot (F|_{\Sm_{k'}})$, which we have already established.
\end{proof}

\bibliographystyle{alpha}
\bibliography{references}

\newcommand{\etalchar}[1]{$^{#1}$}
\begin{thebibliography}{EHK{\etalchar{+}}19b}

\bibitem[AE17]{antieau2017primer}
Benjamin Antieau and Elden Elmanto.
\newblock A primer for unstable motivic homotopy theory.
\newblock {\em Surveys on recent developments in algebraic geometry},
  95:305--370, 2017.

\bibitem[AGP18]{agp}
Alexei Ananyevskiy, Grigory Garkusha, and Ivan Panin.
\newblock Cancellation theorem for framed motives of algebraic varieties.
\newblock 2018.

\bibitem[Bac18]{bachmann-real-etale}
Tom Bachmann.
\newblock Motivic and real étale stable homotopy theory.
\newblock {\em Compositio Mathematica}, 154(5):883–917, 2018.
\newblock \href{https://arxiv.org/abs/1608.08855}{arXiv:1608.08855}.

\bibitem[Bac19]{bachmann-grassmann}
Tom Bachmann.
\newblock Affine grassmannians in $\mathbb a^1$-homotopy theory.
\newblock {\em Selecta Mathematica}, 25(2):25, Mar 2019.

\bibitem[Bac21]{bachmann-cancel}
Tom Bachmann.
\newblock Cancellation theorem for motivic spaces with finite flat transfers.
\newblock {\em Doc. Math.}, 26:1121--1144, 2021.
\newblock \href{https://arxiv.org/abs/2006.14934}{arXiv:2006.14934}.

\bibitem[BH98]{bruns1998cohen}
Winfried Bruns and H~J{\"u}rgen Herzog.
\newblock {\em Cohen-macaulay rings}.
\newblock Cambridge university press, 1998.

\bibitem[BH17]{bachmann-norms}
Tom Bachmann and Marc Hoyois.
\newblock Norms in motivic homotopy theory.
\newblock 2017.
\newblock \href{https://arxiv.org/abs/1711.03061}{arXiv:1711.03061}.

\bibitem[DK18]{druzhinin2018cohomological}
Andrei Druzhinin and H{\aa}kon Kolderup.
\newblock Cohomological correspondence categories.
\newblock {\em arXiv preprint arXiv:1808.05803}, 2018.

\bibitem[DP18]{DruzhininPanin}
Andrei Druzhinin and Ivan Panin.
\newblock Surjectivity of the etale excision map for homotopy invariant framed
  presheaves.
\newblock 2018.

\bibitem[Dru17]{druzhinin2017strictly}
Andrei Druzhinin.
\newblock Strictly homotopy invariance of nisnevich sheaves with gw-transfers.
\newblock {\em arXiv preprint arXiv:1709.05805}, 2017.

\bibitem[Dru18]{druzhinin2018framed}
AE~Druzhinin.
\newblock Framed motives of smooth affine pairs.
\newblock {\em arXiv preprint arXiv:1803.11388}, 2018.

\bibitem[Dé07]{deglise-regular-base}
Frédéric Déglise.
\newblock Finite correspondences and transfers over a regular base.
\newblock In {\em Algebraic cycles and motives. Volume 1}, volume 343 of {\em
  Lecture Note Ser.} London Math. Soc., 2007.

\bibitem[EHK{\etalchar{+}}19a]{EHKSY3}
Elden Elmanto, Marc Hoyois, Adeel~A. Khan, Vladimir Sosnilo, and Maria
  Yakerson.
\newblock Modules over algebraic cobordism.
\newblock \url{https://arxiv.org/abs/1711.05248}, 2019.

\bibitem[EHK{\etalchar{+}}19b]{EHKSY1}
Elden Elmanto, Marc Hoyois, Adeel~A. Khan, Vladimir Sosnilo, and Maria
  Yakerson.
\newblock Motivic infinite loop spaces.
\newblock \url{https://arxiv.org/abs/1711.05248}, 2019.

\bibitem[GNP18]{gnp}
Grigory Garkusha, Alexander Neshitov, and Ivan Panin.
\newblock Framed motives of relative motivic spheres.
\newblock 2018.

\bibitem[GP18a]{garkusha2014framed}
Grigory Garkusha and Ivan Panin.
\newblock Framed motives of algebraic varieties (after {V.} {V}oevodsky).
\newblock 2018.

\bibitem[GP18b]{hitr}
Grigory Garkusha and Ivan Panin.
\newblock Homotopy invariant presheaves with framed transfers.
\newblock 2018.

\bibitem[Gro61]{EGAII}
Alexandre Grothendieck.
\newblock {\'E}l{\'e}ments de g{\'e}om{\'e}trie alg{\'e}brique: Ii. {\'e}tude
  globale {\'e}l{\'e}mentaire de quelques classes de morphismes.
\newblock {\em Publ. Math. I.H.{\'E}.S.}, 8, 1961.

\bibitem[Hov01]{hovey2001spectra}
Mark Hovey.
\newblock Spectra and symmetric spectra in general model categories.
\newblock {\em Journal of Pure and Applied Algebra}, 165(1):63--127, 2001.

\bibitem[Hoy15]{hoyois-algebraic-cobordism}
Marc Hoyois.
\newblock From algebraic cobordism to motivic cohomology.
\newblock {\em Journal f{\"u}r die reine und angewandte Mathematik (Crelles
  Journal)}, 2015(702):173--226, 2015.

\bibitem[Hoy17]{hoyois-equivariant}
Marc Hoyois.
\newblock The six operations in equivariant motivic homotopy theory.
\newblock {\em Advances in Mathematics}, 305:197--279, 2017.

\bibitem[Hoy18]{framed-loc}
Marc Hoyois.
\newblock The localization theorem for framed motivic spaces.
\newblock 2018.

\bibitem[Kai18]{kai2015moving}
Wataru Kai.
\newblock A moving lemma for algebraic cycles with modulus and contravariance.
\newblock 2018.

\bibitem[Lur17a]{HA}
Jacob Lurie.
\newblock Higher algebra.
\newblock September 2017.

\bibitem[Lur17b]{HTT}
Jacob Lurie.
\newblock Higher topos theory.
\newblock April 2017.

\bibitem[Mor05]{morel2005stable}
Fabien Morel.
\newblock The stable $\mathbb{A}^1$-connectivity theorems.
\newblock {\em K-theory}, 35(1):1--68, 2005.

\bibitem[Mor12]{A1-alg-top}
Fabien Morel.
\newblock {\em $\mathbb{A}^1$-Algebraic Topology over a Field}.
\newblock Lecture Notes in Mathematics. Springer Berlin Heidelberg, 2012.

\bibitem[MV99]{A1-homotopy-theory}
Fabien Morel and Vladimir Voevodsky.
\newblock $\mathbb{A}^1$-homotopy theory of schemes.
\newblock {\em Publications Mathématiques de l'Institut des Hautes Études
  Scientifiques}, 90(1):45--143, 1999.

\bibitem[MVW06]{lecture-notes-mot-cohom}
Carlo Mazza, Vladimir Voevodsky, and Charles Weibel.
\newblock {\em Lecture notes on motivic cohomology}.
\newblock American Mathematical Soc., 2006.

\bibitem[Rob15]{robalo}
Marco Robalo.
\newblock K-theory and the bridge from motives to noncommutative motives.
\newblock {\em Adv. Math.}, 269:399--550, 2015.

\bibitem[{Sta}18]{stacks-project}
The {Stacks Project Authors}.
\newblock {\itshape Stacks Project}.
\newblock \url{http://stacks.math.columbia.edu}, 2018.

\bibitem[Voe00]{voevodsky2000cohomological}
Vladimir Voevodsky.
\newblock Cohomological theory of presheaves with transfers.
\newblock {\em Cycles, transfers, and motivic homology theories}, 143:87--137,
  2000.

\bibitem[Voe01]{voevodsky-framed}
Vladimir Voevodsky.
\newblock Framed correspondences.
\newblock {\em Unpublished}, 2001.

\bibitem[Voe10]{voevodsky2002cancellation}
Vladimir Voevodsky.
\newblock Cancellation theorem.
\newblock {\em Documenta Mathematica}, Extra Volume: A. Suslin's Sixtieth
  Birthday:671--685, 2010.

\bibitem[Wei13]{weibel-k-book}
Charles~A Weibel.
\newblock {\em The $ K $-book: An Introduction to Algebraic $ K $-theory},
  volume 145.
\newblock American Mathematical Soc., 2013.

\end{thebibliography}

\end{document}